  \newcommand{\R}{\ensuremath{\mathbb{R}}}
  \newcommand{\N}{\ensuremath{\mathbb{N}}}
  \newcommand{\M}{\ensuremath{{M}}}
  \newcommand{\Nc}{\mathcal{N}}
  \newcommand{\T}{\ensuremath{\top}}
\newcommand{\vast}{\bBigg@{4}}
\newcommand{\Vast}{\bBigg@{5}}
\newcommand{\V}[1]{\ensuremath{\mathbf{#1}}}
\newcommand{\norm}[1]{\left|\left| #1 \right|\right|}
\newcommand{\TODO}[1]{ 
\ifx\NOTES\undefined\else
{\color{red} [!]}\footnote{ {\color{red} TODO: #1}}
\fi
}
\newcommand{\NOTE}[1]{ 
\ifx\NOTES\undefined\else
  \footnote{ {\color{blue} NOTE: #1}}  
\fi
}
\newcommand{\ecomment}[1]{ 
\ifx\NOTES\undefined\else 
{\color{blue}[E]}\footnote{ {\color{blue} Erez: #1}}
\fi
}
\newcommand{\mcomment}[1]{ 
\ifx\NOTES\undefined\else
  {\color{green} [M]}\footnote{ {\color{green} Matan: #1}}  
\fi
}
\newcommand{\Pd}[3]{\ifthenelse{\equal{#3}{1}}{\frac{\partial #1}{\partial #2}}{\frac{\partial^{#3} #1}{\partial #2^{#3}}}}
\newcommand{\Vx}{a}
\newcommand{\mysum}[3]{\underset{#1=#2}{\overset{#3}{\sum}}}
\def \NOTES
\newtheorem{thm}{Theorem}
\newtheorem{Lemma}{Theorem}
\newtheorem{Corollary}{Theorem}
\newtheorem{Definition}{Theorem}
\newtheorem{lem}[Lemma]{Lemma}
\newtheorem{crl}[Corollary]{Corollary}
\newtheorem{defin}[Definition]{Definition}
\newproof{pf}{Proof}
\newcommand{\Tr}{\ensuremath{{\scriptscriptstyle \top}}}
\newcommand{\Vxh}{\hat{\Vx}}
\newcommand{\Vy}{\ensuremath{{b}}}
\newcommand{\Vxt}{\ensuremath{{\tilde{\Vx}}}}
\newcommand{\Mx}{\ensuremath{{X}}}
\newcommand{\My}{\ensuremath{{Y}}}
\newcommand{\Mxh}{\ensuremath{\hat{X}}}
\newcommand{\Sx}{\ensuremath{x}}
\newcommand{\Shy}{\ensuremath{y}}
\newcommand{\Sxh}{\ensuremath{\hat{x}}}
\newcommand{\MDSp}{MDS+}
\def\ps@pprintTitle{%
	\let\@oddhead\@empty
	\let\@evenhead\@empty
	\def\@oddfoot{}%
	\let\@evenfoot\@oddfoot}
\begin{document}

	\begin{frontmatter}
		
		\title{
			Multidimensional Scaling of \\ Noisy High Dimensional Data }

		\author{Erez Peterfreund\corref{corErez}\fnref{label2}}
		\author{Matan Gavish\corref{corMatan}\fnref{label2}}
		\fntext[label2]{{School of Computer Science and Engineering, The Hebrew University, 
		Jerusalem, Israel}}
		\address{The Hebrew University of Jerusalem\fnref{label3}}

		\cortext[corErez]{corresponding author: erezpeter@cs.huji.ac.il}
		\cortext[corMatan]{corresponding author: gavish@cs.huji.ac.il}
		
		\begin{abstract}

		Multidimensional Scaling (MDS) is a classical technique for
		embedding data in low dimensions, still in widespread use today.
		Originally introduced in the 1950's, MDS was not designed with
		high-dimensional data in mind; while it remains popular with
		data analysis practitioners, no doubt it should be adapted to
		the high-dimensional data regime.  In this paper we study MDS
		under modern setting, and specifically, high dimensions and
		ambient measurement noise. We show that, as the ambient noise
		level increase, MDS suffers a sharp breakdown that depends on
		the data dimension and noise level, and derive an explicit
		formula for this breakdown point in the case of white noise. We
		then introduce \MDSp, an extremely simple variant of MDS, which
		applies a carefully derived shrinkage nonlinearity to the
		eigenvalues of the MDS similarity matrix.  Under a loss function
		measuring the embedding quality, \MDSp~ is the unique
		asymptotically optimal shrinkage function.  We prove that \MDSp~
		offers improved embedding, sometimes significantly so, compared
		with classical MDS.  Furthermore, \MDSp~ does not require
		external estimates of the embedding dimension (a famous
		    difficulty in classical MDS), as it calculates the optimal
		    dimension into which the data should be embedded.
			
		\end{abstract}
		
		\begin{keyword}
			Multidimensional scaling \sep Euclidean embedding \sep dimensionality reduction \sep 
			singular value thesholding \sep optimal shrinkage \sep MDS+ 
			
		\end{keyword}
		
	\end{frontmatter}

	\section{Introduction}
	\label{S:1}
	
	Manifold learning and metric learning methods have become central items in
	the toolbox of any modern data scientist. These techniques seek to
	reconstruct a global, low-dimensional geometric structure of a dataset from
	pairwise similarity measurements \cite{tenenbaum2000Isomap,	roweis2000LLE, coifman2006diffusion, vincent2008extracting, bishop1998developments,
	bellet2013survey,bengio2013representation}.

	Multidimensional Scaling \cite{Torgerson1952a} (MDS) was the first
	metric learning algorithm proposed, and arguably the 
	one most widely used today.
 It is used extensively for exploratory data analysis, inference and 
 visualization in many science and engineering disciplines, as well
	as in psychology, medicine and the social sciences
	\cite{torgerson1958theory,kruskal1978multidimensional,
		cox2000multidimensional, borg2005modern, young2013multidimensional}.
	
		In the MDS algorithm, one considers an unknown point cloud
		$\V{\Vy_1},\ldots, \V{\Vy_n}\in\R^p$ and assumes that only the distances
		$\Delta_{i,j}=\norm{\V{\Vy_i}-\V{\Vy_j}}^2$ are observable.
		MDS, which aims to reconstruct the global spatial configuration of the
		point cloud, proceeds as follows. 
	
		\begin{enumerate} 
			\item First, form the similarity matrix 
				\begin{eqnarray}
				\label{eq:S} S=-\frac{1}{2}H\cdot \Delta \cdot H\,, 
			\end{eqnarray}
				where $H=I-\frac{1}{n}\V{1}\cdot \V{1}^\T$ is a data-centering
				matrix.  
			\item Next, diagonalize $S$ to form \begin{eqnarray}
				\label{eq:Sdiag} S=U\cdot D\cdot U' \end{eqnarray}	where
				$D=diag(d_1,\ldots,d_n)$ and $U$ is orthogonal with orthonormal
				columns $\V{u}_1,\ldots \V{u}_n$.  
			\item Then, estimate (or guess) the original
				dimension of the point cloud,
				$r=dim\,Span \{\V{\Vy_1},\ldots \V{\Vy_n}\}$.
			
			\item Finally, 
				return the $n$-by-$r$ matrix with
				columns $\sqrt{d_i} \cdot \V{u}_i$ ($i=1,\ldots,r$). 
				Embed the points into $\R^r$
				using the rows of this matrix.  
		\end{enumerate}

		This paper addresses two crucial
		issues that remain open in the practice of MDS on high-dimensional data: the
	effect of ambient noise, and the choice of embedding dimension. As we will
	see, while these issues are seemingly different, they are   in fact very closely
	related. 
	 Let us first elaborate on each issue in turn.

	 \subsection{Choice of embedding dimension} 
	
	While MDS is extremely popular among practitioners in all walks of science,
	there is a decades-old inherent conundrum involved in its use in practice.
	Strangely, the literature offers no systematic method for
	choosing the embedding dimension $r$.  The original paper
	\cite{Torgerson1952a}, as well as numerous authors since, have proposed
	various heuristics for choosing the ``correct'' embedding dimension. In
	fact, recent tutorials such as \cite{hout2013multidimensional}, and even
	the SPSS user's manual\footnote{\url{https://www.ibm.com/support/knowledgecenter/SSLVMB_23.0.0/spss/tutorials/proxscal_data_howto.html}
	. (Accessed 1/1/2018)},
	still offer no systematic method and recommend Cattel's {\em Scree Plot}
	heuristic \cite{Cattell1966}, a 50-year-old method based on subjective
	visual inspection of the data.

	Our first main contribution in this paper is 
	a systematic
	method for choosing $r$, the embedding dimension, from the data.
	The estimator $\hat{r}$ we propose 
	is provably optimal in the asymptotic regime $n,p\to\infty$,
	under a suitable loss function quantifying the embedding quality, and under the assumption of white ambient noise.

	Concretely, Table \ref{concrete:tab} shows the value of the optimal hard
	threshold $\lambda^*$ for MDS, a concept we develop below. To find the
	asymptotically optimal embedding dimension $\hat{r}$ in an MDS problem
	with $n$ vectors in ambient dimension $p$ and white ambient noise 
	with standard deviation $\sigma$, simply proceed as follows. First,
	let $\beta=(n-1)/p$ and find the value $\lambda^*$ from Table
	\ref{concrete:tab} ({\tt Python} and {\tt Matlab} code to evaluate
	$\lambda^*$ exactly is provided in the code supplement \cite{CODE},
    based on formula \eqref{eq:SVHTOptimalShrinker} below). 
    Then, let $\hat{r}$ be the number of eigenvalues the matrix $S$ from \eqref{eq:S}  that fall above the threshold 
    $(\sigma\cdot \lambda^*)^2$. If $\sigma$ is unknown, as is often the case, use
    the consistent and robust estimator $\hat{\sigma}$ from \eqref{eq:SigmaEstimation} 
    below instead; an implementation of this estimator is included in the code
    supplement \cite{CODE}.

\begin{table}[h]
	\centering
	\begin{tabular}{| c | c | c | c | c | c | c | c | c | c | c |}
		\hline
		$\beta$ & 0.05 & 0.1 & 0.15 & 0.2 & 0.25 & 0.3 & 0.35 & 0.4 & 0.45 & 5 \\[0.2cm]  \hline
		$\lambda^*$ & 1.301 & 1.393 & 1.467 & 1.531 & 1.588 & 1.639 & 1.688 & 1.733 & 1.775 & 1.816 \\ \hline
		\multicolumn{1}{r}{ } & \multicolumn{1}{r}{ } & \multicolumn{1}{r}{ } & \multicolumn{1}{r}{ } &
		\multicolumn{1}{r}{ } & \multicolumn{1}{r}{ } & \multicolumn{1}{r}{ } & \multicolumn{1}{r}{ } &
				\multicolumn{1}{r}{ } & \multicolumn{1}{r}{ } & \multicolumn{1}{r}{ } 		
		\\ \hline
		$\beta$ & 0.55 & 0.6 & 0.65 & 0.7 & 0.75 & 0.8 & 0.85 & 0.9 & 0.95 & 1 \\[0.2cm]  \hline 
		$\lambda^*$ & 1.854 & 1.891 & 1.927 & 1.962 & 1.995 & 2.028 & 2.059 & 2.09 & 2.12 & 2.149 \\[0.2cm] \hline
	\end{tabular}
	\caption{ 
	    The optimal threshold for MDS - see Section \ref{S:3} below.  
	    The asymptotically optimal embedding dimension $\hat{r}$ is obtained by counting the eigenvalues of the matrix S from \eqref{eq:S} that fall above the threshold $(\sigma\lambda^*)^2$. 
	    If $\sigma$ is unknown, as is often the case, use the consistent and robust estimator $\hat{\sigma}$ 
	    from \eqref{eq:SigmaEstimation}.  
	\label{concrete:tab}}
\end{table}

	\subsection{Breakdown of MDS in high-dimensional ambient noise}

	In the six decades since MDS was proposed, typical datasets  have grown in
	both size and dimension. MDS, as well as more recently proposed manifold
	learning techniques, are being applied to data
	of increasingly high ambient dimensions, departing from the setup 
	$ p \ll n$ 
	for which
	they were originally designed. In particular, when the data is
	high-dimensional, certain mathematical phenomena
	kick in, which fundamentally alter the behavior of MDS.

	In practice, even though the data is measured in an ambient
	high dimensional space,
	it often resides on a low-dimensional structure embedded in that space. Manifold
	learning techniques, for example, assume that the data resides on a
	low-dimensional smooth manifold embedded in the high-dimensional space. 
	For simplicity, consider an
 ``original'' dataset $\{\V{\Vx_1},\ldots,\V{\Vx_n}\}\subset \R^d$ that
resides on a $d$-dimensional
linear subspace ($d\ll p$), embedded in the ambient space $\R^p$, in which the data is
actually measured. It is natural to assume that the measurements are noisy, so
that we actually observe samples $\Vy_1,\ldots,\Vy_n$ with
\[
	\Vy_i = \rho(\V{\Vx_i})+\varepsilon_i \qquad i=1,\ldots,n \,,
\]
where $\rho:\R^d\hookrightarrow\R^p$ is the isometry embedding the low-dimensional
subspace into the ambient space, and where $\varepsilon_i\in\R^p$ are ambient noise vectors
($i=1,\ldots, n$). 

When the ambient dimension $p$ is not much smaller than the sample size $n$,
the presence of ambient noise can have drastic effects on the diagonalization 
step of MDS. In particular, in the related scenario of covariance matrix
estimation, results from high dimensional statistics \cite{Paul2007}
and random matrix theory \cite{Baik2005}
 have shown in that  the eigenvalues and eigenvectors of the data matrix
 deviate, sometimes significantly,
 from the embedding vectors they are presumed to estimate. As a result, the
 quality of the MDS embedding becomes sensitive to the ambient noise level.
 In this paper we demonstrate that a phenomenon names after Baik, Ben-Arous and
 P\'ech\'e \cite{Baik2005} occurs in MDS, whereby there is a sharp phase
 transition in the embedding quality (see examples in Section
	 \ref{examples:subsec} below,
 Figure \ref{fig:mnist} and Figure \ref{fig:helix}).

 Our second main contribution is 
 formal characterization of this phenomenon in MDS. We show 
 that, as the ambient noise level increases, 
 MDS suffers a sharp {\em breakdown},  
 and provide an asymptotically exact formula for
	the signal-to-noise level at which breakdown occurs.
	
	\subsection{An optimal variant of MDS}
	\label{SS:1OptVariant}
	Even before breakdown occurs, in the high-dimensional setting, 
	the quality of the MDS embedding deteriorates
	as the ambient noise level increases. This calls for an improvement of MDS,
	which is able to correct for the noise effects. Experience tells us that  
	complicated alternatives of MDS do not become widely used by scientists. 
	Instead, a simple variation on MDS is preferred, which can be 
	calculated easily based on the existing MDS methodology.

	A simple solution is available in the
	form of {\em eigenvalue shrinkage}. Recently, in the related problem of
	covariance matrix estimation, \cite{Donohoa2013}
	have shown that, by applying a carefully designed shrinkage function to the
	eigenvalues of a sample covariance matrix, it is possible to
	significantly mitigate the effects of high-dimensional ambient noise.
	
	Here, we consider a simple variant of MDS that applies a univariate 
	shrinkage function $\eta:\R\to\R$ to the eigenvalues $d_1,\ldots,d_n$ of the
	MDS matrix $S$ from \eqref{eq:S}. Instead of using the eigenvalues 
	$d_1,\ldots, d_n$ of $S$ as in \eqref{eq:Sdiag} above, we 
	use the shrunken values $\eta(\sqrt{d_1}),\ldots,\eta(\sqrt{d_n})$ 
 -- see example in Section
	 \ref{examples:subsec} below,
	 Figure \ref{fig:helix_opt1} and Figure \ref{fig:helix_opt2}. In fact, 
	 classical MDS
	 turns out to be equivalent to a specific choice of hard threshold shrinker.
	 The question naturally arises whether better shrinkers can be designed,
	 which outperform classical MDS.

	The third main contribution 
	of this paper is
	\MDSp, formally defined in Table \ref{MDS:alg} below.
	\MDSp~ is a simple variant of MDS which applies a carefully derived shrinkage
	function to the eigenvalues $d_1,\ldots,d_n$
	before proceeding with the MDS embedding.

	Concretely, \MDSp~ is a simple modification of MDS: In step 4 above,
	instead of the MDS embedding that uses $\sqrt{d_i}\cdot \V{u}_i$ we embed
	using $\eta^*(\sqrt{d_i})\cdot \V{u}_i$, where $\eta^*$ is the 
	{\em optimal shrinker} for MDS:
\begin{eqnarray*}
\eta^* (\Shy) =
\left\{ \begin{array}{ll}
\sigma \sqrt{(\Sx(\Shy)/\sigma)^2-\beta-\frac{\beta\cdot
		(1-\beta)}{(\Sx(\Shy)/ \sigma  )^2+\beta}} & 
\,\, \Shy>\sigma\cdot(1+\sqrt{\beta})\\
0 & \,\,otherwise
\end{array} \right.
\end{eqnarray*}
where 
\begin{eqnarray*}
\Sx(\Shy)=\frac{\sigma}{\sqrt{2}} \sqrt{ \left(\frac{\Shy}{\sigma}\right)^2-1-\beta+\sqrt{\left(\left(\frac{\Shy}{\sigma}\right)^2-1-\beta\right)^2-
		4\beta}}
\,.
\end{eqnarray*}
Here, as above, $\beta=(n-1)/p$ and $\sigma$ is the standard deviation of the
noise, replaced by the consistent robust estimate $\hat{\sigma}$ from \eqref{eq:SigmaEstimation}
 in case $\sigma$ is unknown.

	Figure \ref{fig1} below compares the optimal shrinker underlying \MDSp~ with the hard
	threshold shrinker underlying classical MDS.
	\begin{figure}[H]
		\includegraphics[width=1\linewidth]{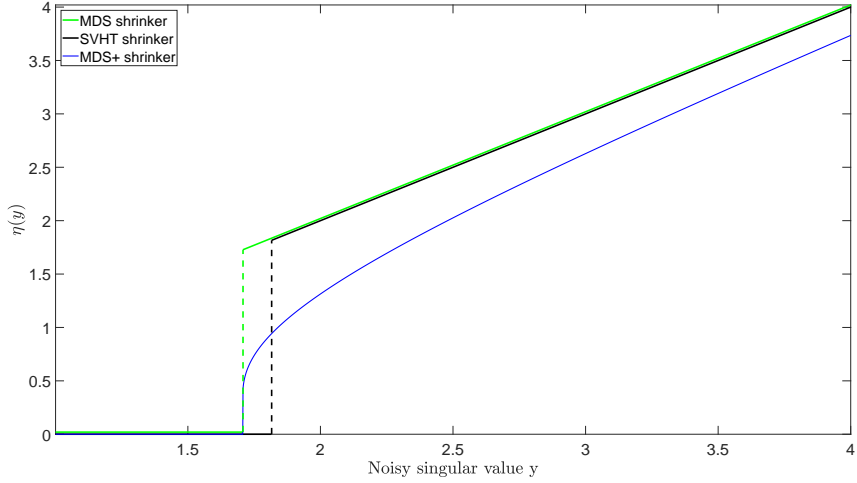}
		\caption{The optimal shrinker derived in this paper, compared
		    with an optimal hard threshold SVHT (see below) and the classical
		    MDS shrinker. Specific parameters are used --
		   see figure
		   \ref{fig:shrinkers_funcs} and section \ref{S:3}. (Color online)
		\label{fig1}
		}	
	\end{figure}
	As the figure shows, if $\sqrt{d_i}$ is too small, and specifically if
	$d_i\leq \sigma\cdot(1+\sqrt{\beta})$, 
	$\eta^*(\sqrt{d_i})=0$. In this case the vector $\V{u}_i$ is not used
	in the embedding, resulting in a smaller embedding dimension. In other
	words, the embedding dimension used by \MDSp~ equals the amount of
	eigenvalues of the matrix $S$ that fall above the value 
	$ \sigma^2\cdot(1+\sqrt{\beta})^2$. Table \ref{MDS:alg} below summarizes
	the \MDSp~ algorithm.

	Measuring the embedding
	quality using a natural loss function, we prove that \MDSp~ offers
	the {\em best possible embedding}, asymptotically, among any eigenvalue 
	shrinkage variant of MDS.

	\begin{table}[H]
		\centering
		\begin{tabular}{l}
			\hline
			\textbf{\MDSp}\\
			\hline
			Input: distance matrix $\Delta$ and dimensionality $p$.
			\\
			Let $\beta=(n-1)/{p}$. Let the value of $\sigma$ be given or 
			estimated (see  Theorem \ref{thmSigmaEstimation} below).
			\\
			1. Create the similarity matrix\\
			$\hspace{37mm} S=-\frac{1}{2}H\cdot \Delta \cdot H$\\
			2. Diagonlize S to obtain\\
			$\hspace{40mm} S=U\cdot D \cdot U'$\\
			where $U\in O(n)$ and $D=diag(d_1,\ldots,d_n)$ s.t.
			$d_1\geq\ldots\geq d_n$.\\
			3. Estimate the embedding dimension\\
			$ \hspace{20mm} r= \#\left\{i\in \{1,\ldots,n\}:{\ }d_i>\sigma
			\cdot (1+\sqrt{\beta}) \right\} $.\\
			4. Return a $n$-by-$r$ 
			matrix with columns $\eta^*(\sqrt{d_i})\cdot\V{u}_i$ ($i=1,\ldots,r$),\\ 
			where $\eta^*$ is as in Theorem \ref{thmmdsOptShrinkerExplicit}.
			Embed the points into $\R^r$ using the rows of the matrix.
			\\
			\hline
		\end{tabular}
		\caption{The \MDSp~ Algorithm \label{MDS:alg}}
	\end{table}

	\subsection{Examples} \label{examples:subsec}
	As a gentle introduction to our results, 
	we consider two simple examples of MDS from noisy, high-dimensional data.
\\~\\
	{\bf MNIST.} 
	The famous MNIST dataset \cite{LeCunYann1998} contains greyscale images of hand-written
	digits. Clustering after MDS
	embedding (a form of spectral clustering) is often used to distinguish between different digits. 
	For illustration purposes, we studied 700 images of the digits $0$ and $1$
	(Figure \ref{fig:zero-one}), with varying levels of added white Gaussian
	noise. As the data consists of two distinct clusters, it should be enough to
	use MDS embedding into $r=2$ dimensions. 
	Figure \ref{fig:mnist} clearly shows the deteriorating embedding quality as
	the noise level increases, and the eventual MDS breakdown, accurately 
	predicted by
	Theorem \ref{thmThmMDSExplicitLoss} below.

	\begin{figure}[h!]
		\centering		\includegraphics[width=0.45\linewidth]{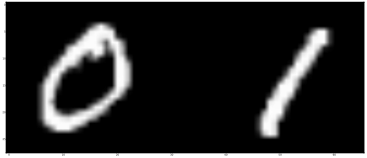}
		\caption{Example of images from MNIST \cite{LeCunYann1998}
 }
		\label{fig:zero-one}
	\end{figure}

	\begin{figure}[h!]
	\includegraphics[trim={4cm 0 0 0},clip, width=1\linewidth]{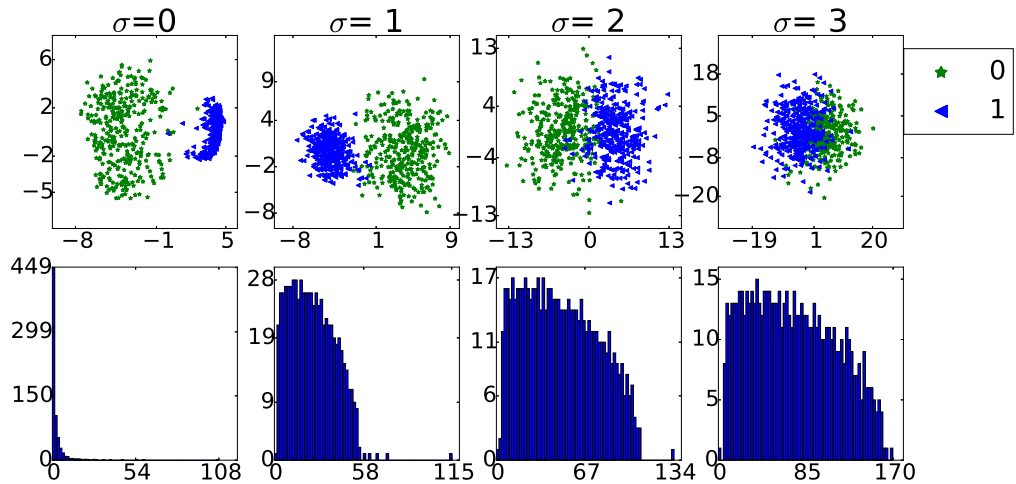}
	\caption{Breakdown of MDS on MNIST data. $n=700$ images in $p=784$ dimensions
		(pixels). Gaussian i.i.d noise $\Nc(0,\sigma^2)$ added as indicated in
		each column.
		Top panels: MDS embedding into $r=2$ dimensions.
		Bottom panels: histogram of the corresponding 
		square root of the spectra of the MDS matrix $S$ from 
		\eqref{eq:S}. Observe that breakdown occurs exactly as the noise-related 
	eigenvalues, which grow proportionally to $\sigma$, ``engulf'' the lowest
structure-related eigenvalue.   (Color online)}
\label{fig:mnist}
	\end{figure}

~\\
		{\bf Helix.} 
		A recent application of MDS in molecular biology is analysis of 
		Hi-C measurements \cite{tanizawa2010HiCHumanPaper}. Here,
		MDS is used to recover the three-dimensional
		packing of DNA molecules inside the cell nucleus from measurements of
		spatial affinity between loci along the genome. As a toy example for
		this reconstruction problem, we
		consider a reconstruction of a helix-shaped point cloud in $\R^3$ 
		from the pairwise
		distances between $n=300$ points in the cloud. 
		The point cloud was embedded in a high dimensional space $\R^{500}$, and
		i.i.d Gaussian ambient noise of a varying level was added. 
		Figure \ref{fig:helix} demonstrates the deterioration of MDS embedding quality
		with increasing noise level,
		and the corresponding spectra of the MDS matrix. 
		
		Figure \ref{fig:helix_opt1} demonstrates
		the effect of the improved MDS algorithm we propose, based on
		optimal shrinkage of the MDS eigenvalues. The
		breakdown of MDS is apparent in the right panel.
		The optimal shrinker identifies that one of
		the MDS axes is non-informative, and shrinks the corresponding
		eigenvalue to zero. As a result, the \MDSp~ embedding is
		two-dimensional (Figure \ref{fig:helix_opt2}).

	\begin{figure}[h!]
		\centering
		\includegraphics[trim={5cm 0 0 0},clip,width=1.2\linewidth]{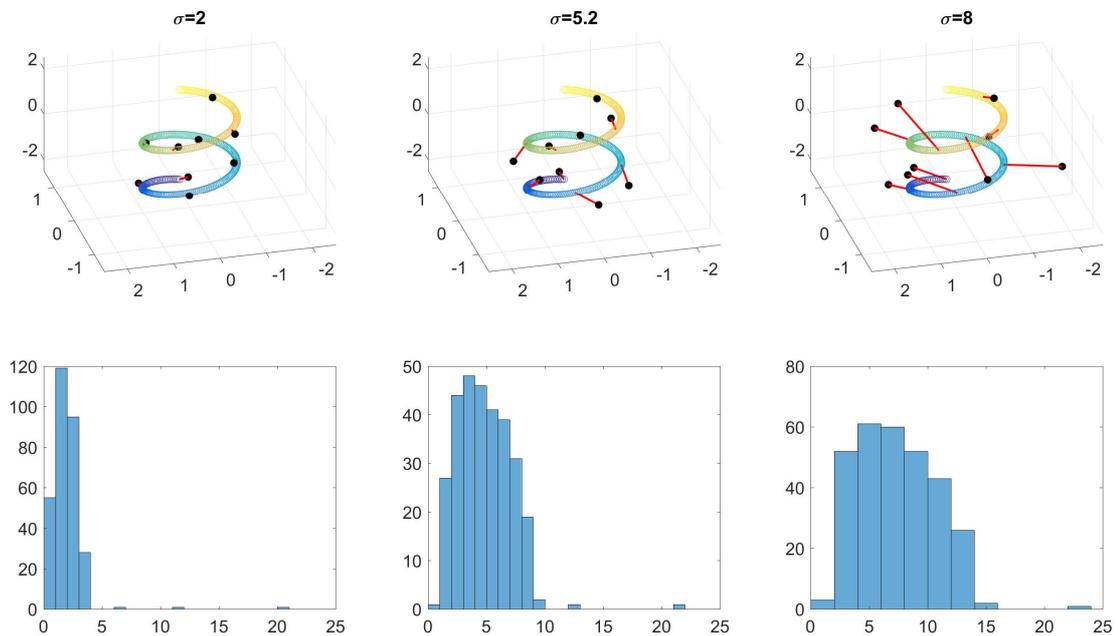}
	\caption{ Breakdown of MDS on a helix simulation. $n=300$ points along a
		helix embedded in $p=500$ dimensions.
		Gaussian i.i.d noise $\Nc(0,\sigma^2)$ was added as indicated in
		each column.
		Top panels: MDS embedding into $r=3$ dimensions (solid: original helix
		shape. black markers: some points of the embedded data. red lines: displacement vectors between the embedding and real position of the data).
		Bottom panels: histogram of the corresponding 
		square root of the spectra of the MDS matrix $S$ from 
		\eqref{eq:S}. Observe that breakdown occurs exactly as the noise-related 
	eigenvalues, which grow proportionally to $\sigma$, ``engulf'' the lowest
structure-related eigenvalue. (Color online)}
		\label{fig:helix}
	\end{figure}

	\begin{figure}[h!]
		\centering
		\includegraphics[trim={5cm 0 0
		0},clip,width=1.\linewidth]{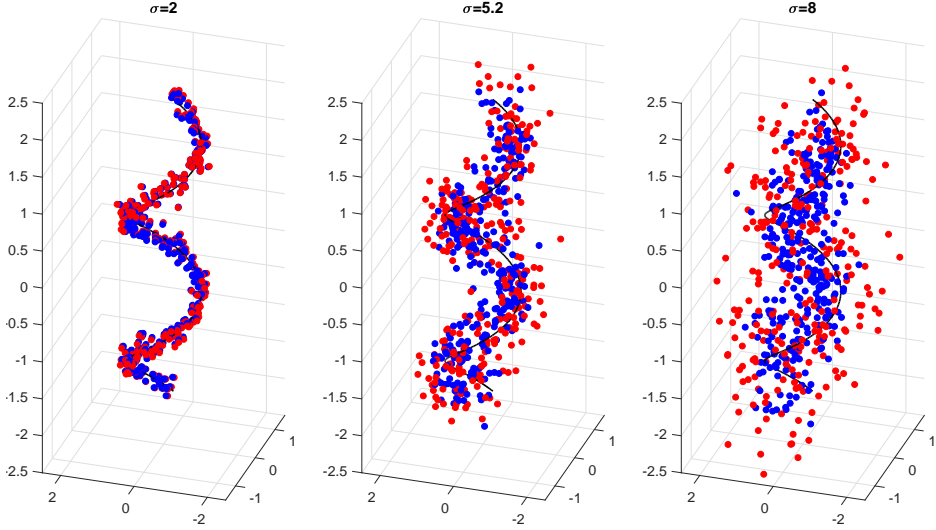}
	\caption{The \MDSp~ algorithm on the helix simulation from Figure
		\ref{fig:helix}.
Red: classical MDS. Blue: \MDSp. An additional view 
of the right panel is available in Figure \ref{fig:helix_opt2} below.
(Color online)}
		\label{fig:helix_opt1}
	\end{figure}

\begin{figure}[h]
		\centering
		\includegraphics[trim={5cm 0 0
		0},clip,width=1.0\linewidth]{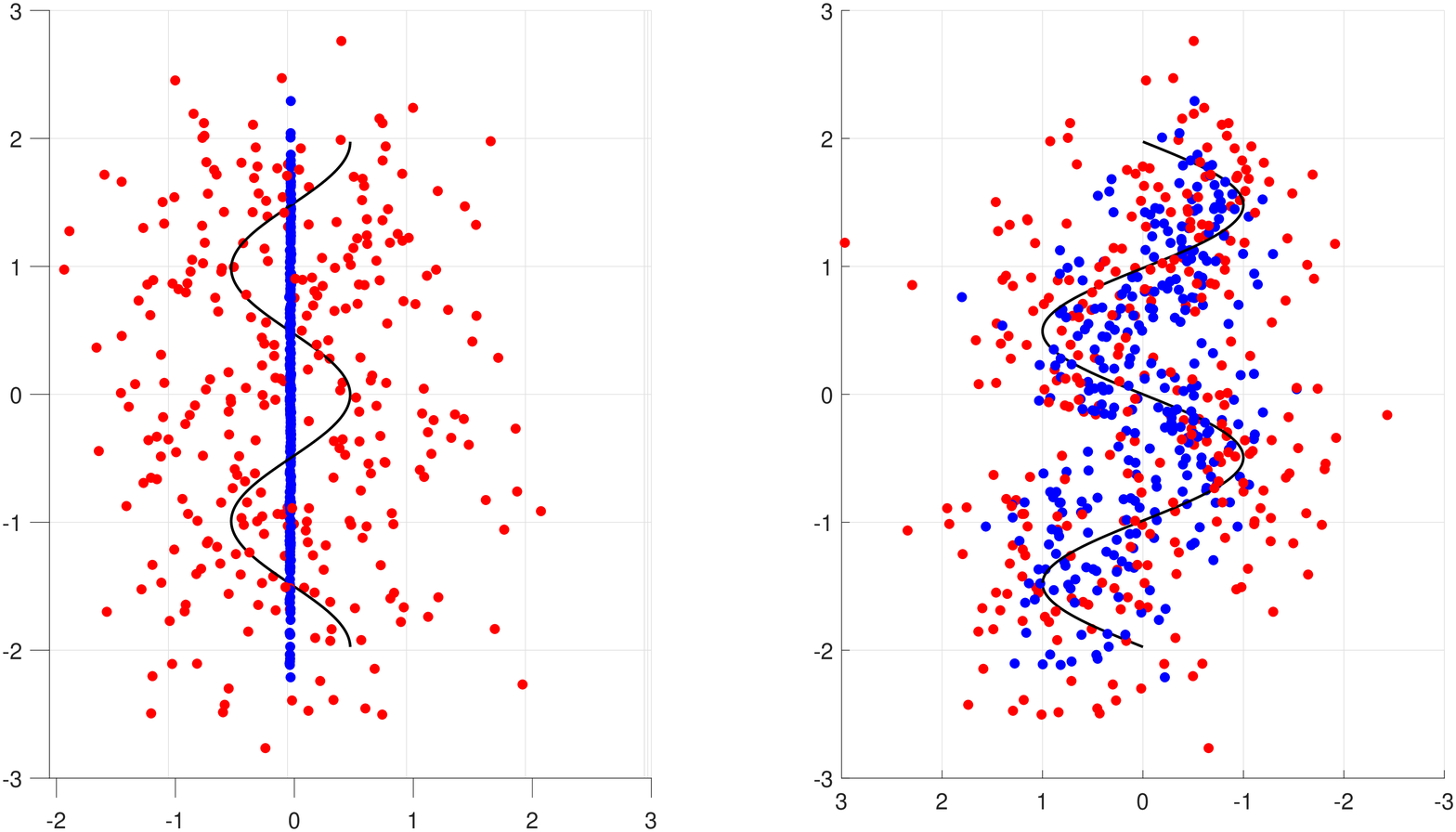}
	\caption{The \MDSp~ algorithm on the helix simulation from Figure
		\ref{fig:helix} in high noise, $\sigma=8$.
		Red: classical MDS. Blue: \MDSp. Embedding into $\R^3$ spanned by the
		standard basis vectors $e_1,e_2,e_3$. Left: the $(e_2,e_3)$ plane.
		Right: the $(e_1,e_3)$ plane.
(Color online)}
		\label{fig:helix_opt2}
	\end{figure}

	\subsection{Related work} 
~	
	\paragraph{Classical MDS}  
    Precursors of MDS go back as far as 1938 to Young and Householder's work on
	Classical Scaling algorithm \cite{young1938discussion}.
	The MDS algorithm as we know it today has been developed by several authors,
	notably 
	Togersson \cite{torgerson1958theory} and Gower \cite{gower1966some}.

	\paragraph{Choosing the embedding dimension}
	
	Classical choice of embedding dimension is based on the fact
	that the similarity matrix \eqref{eq:S} should be
	positive semidefinite.  
	
	The literature offers two popular heuristics for choosing the embedding
	dimension that are based on this fact. In the {\em Scree Plot} method
	\cite{Cattell1966}  one plots 
	the eigenvalues $(d_1,\ldots,d_n)$ from \eqref{eq:Sdiag}  in decreasing order over
	$(1,\ldots,n)$,
	and looks for the ``inflection'', ``knee'' or ``elbow'' in the plot to
	determine $r$. 
	The rational behind this visual heuristic is that the ``scree'', namely the 
	slowly changing eigenvalues (below the inflection point) are due to the
	noise,
	while the top eigenvalues (above the inflection point) are due to the
	$d$-dimensional subspace containing the signal.
	In a different heuristic for choosing $r$, one seeks to 
	maximize the function
		\begin{equation}
	\frac{\sum_{i=1}^{r}d_i}{\sum_{i=1}^{rank(S)} d_i}
	\end{equation}
	while keeping the selected embedding
	dimension $r$ as low as possible \cite{cox2000multidimensionalPage38}. 
	
	In some cases the actual distance matrix gets corrupted, and as a result the
	similarity matrix may have one of more negative eigenvalues. In this case it
	is commonly suggested to choose the embedding dimension by maximizing one of
	the following target functions \cite{cox2000multidimensional}:
	\begin{equation}
		\frac{\sum_{i=1}^{r}\lambda_i}{\sum_{i=1}^{rank(S)} |\lambda_i|} 
		\qquad \text{or} \qquad
		\frac{\sum_{i=1}^{r}\lambda_i}{\sum_{i=1}^{rank(S)} max(0,\lambda_i)}
	\end{equation}
	
	In another heuristic for choosing $r$, due to
	Kruskal \cite{kruskal1964multidimensional}, one 
	considers the so-called {\em Stress-1} function: 
	\begin{equation}
	\sigma_1=	\sqrt{\frac{\sum_{i,j} (\Delta_{i,j} -\hat{\Delta}_{i,j})^2}{\sum_{i,j} (\Delta_{i,j})^2}}
	\end{equation}
	where $\hat{\Delta}_{i,j}$ is the euclidean distance between the embedding
	of point $i$ and the embedding of point $j$. 
	Kruskal proposed to choose the embedding dimension $r$ by minimizing
	$\sigma_1$. 

	One striking observation regarding the selection of embedding dimension is
	that even though MDS and its variants are extremely popular, the literature
	does not propose a systematic method that is backed up by rigorous theory. 
	
	\paragraph{Algorithm performance in the presence of noise}
	To the best of our knowledge, the literature does not offer a systematic
	treatment on 
	the influence of ambient noise on MDS embedding quality.
	Kruskal's variation of MDS  \cite{Kruskal1964} was studied by Cox and Cox
	\cite{cox1990interpreting}, 
	who have shown by simulation that the stress function 
	is an almost perfectly linear function of the noise when $d=r=2$,
	independently of the amount of samples. 

	\subsection{Outline}
	This paper proceeds as follows. In Section \ref{S:2} we provide the formal
	problem setup, and propose a loss function to quantify the quality of any 
	low-dimensional embedding algorithm.
	Our main results are stated in Section \ref{S:3} and discussed in 
	 Section \ref{S:5}.
	The proofs appear in Section \ref{S:6}, where we also show how to estimate
	the ambient noise level.
While our main results are stated under the assumption $n-1\leq  p$, 
in the Appendix we rigorously
extend our results to the case $n-1\geq p$.

\subsection{Reproducibility Advisory}

The algorithms developed in this paper have been implemented in both 
{\tt Python} and  {\tt Matlab} and are made available in the 
code supplement \cite{CODE}. The code supplement also includes source code to
reproduce all the figures in this paper.

\section{Problem setup}
\label{S:2}	

\subsection{Notation}
\label{Notation}
Let $a_+= max(a,0)$ denote the positive part of $a\in \R$.
We use boldface letters such as $\V{\Vy}\in\R^p$ to denote a vector with coordinates
$[(\V{\Vy})_1,\ldots,(\V{\Vy})_p]$ and Euclidean norm $\norm{\V{\Vy}}_2$. 
We use capital letters such as
$A$ to denote a matrix with transpose $A^\Tr$. The $i$-th column and $j$-th row 
of $A$ will be
denoted by $A_{*,i}$ and $A_{j,*}$ respectively.
Denote the set of $m$-by-$n$ real matrices by $\M_{m\times n} $ and
the set of orthogonal matrices by $O(n) \subset \M_{n\times n}$.
The Frobenius norm of $A\in\M_{m\times n}$ is defined by
\[
	\norm{A}_F = \sqrt{\sum_{i=1}^m\sum_{j=1}^n A_{i,j}^2}\,.
\]
Let $\V{1}_p\in \R^p$ denote the vector $\V{1}_p=(1,\ldots,1)$
and let $1_{m\times n}$ denote the ``all-ones'' $m$-by-$n$ 
matrix $1_{m \times n} =
\mathbf{1}_m \mathbf{1}_n^\Tr$. Similarly, 
let $\mathbf{0}_p\in \R^p$ denote the vector $\mathbf{0}_p=(0,\ldots,0)$
and let
$0_{m \times n}$ denote the
``all-zeros'' $m$-by-$n$ matrix.
 For some $d_1,\ldots,d_n$ we denote by $diag(d_1,\ldots,d_n)$ the
 $n$-by-$n$ diagonal matrix with main diagonal $d_1,\ldots, d_n$. 
 The $n$-by-$n$ identity matrix $diag(1,\ldots,1)$ is denoted by
 $I_n$.
 We also denote the $m$-by-$n$  "all-zeros" matrix, with ones only on its main diagonal using $I_{m\times n}$.
Finally, we use $H$ to denote the
MDS centralization matrix 
\begin{equation}
\label{eq:centralizationMat}
H=I_n-\frac{1}{n}\V{1}\cdot \V{1}^\Tr= I_n - \frac{1}{n} 1_{n\times n}\,.
\end{equation}

\subsection{Setup}
\label{Setup}
In this paper we consider MDS and its variants when applied to 
noisy, high dimensional measurements of a dataset with low intrinsic dimension.
Let $d$ denote the (low) intrinsic dimension and assume that we are interested
in the unknown, unobservable dataset $\{	\V{\Vx_i}\}_{i=1}^n\subset \R^d$.
These data are embedded in a high dimensional space $\R^p$ via an unknown rotation
matrix $R\in O(p)$, such that 
\begin{eqnarray}
\label{eq:aTilde}
	\V{\Vxt_i} = R \cdot\left( 
		\begin{array}[h]{c}
			\V{\Vx_i} \\ \V{0_{p-d}}
		\end{array}
	\right)
\qquad i=1,\ldots,n
\,.
\end{eqnarray}

However, we only observe a noisy version
of the embedded dataset, which we denote by $\{\V{\Vy_i}\}_{i=1}^n\subset \R^p$.
Formally,
\begin{eqnarray}
\label{eq:notationDataModel}
	\V{\Vy_i} = \V{\Vxt_i} + \V{\varepsilon_i} \qquad i=1,\ldots,n
\end{eqnarray}
where
 $\{\V{\epsilon_i}\}_{i=1}^n \overset{iid}{\sim}\Nc_p(\V{0}_p,\frac{\sigma}{\sqrt{p}}\cdot I_p)$ 
 is ambient noise of level $\sigma$. (Note that in the introduction we used
     noise level without normalization; henceforth noise level $\sigma$ implies
 noise standard deviation $\sigma/\sqrt{p}$.) 
While we assume $d< n \leq p$, we will focus on the regime 
$d\ll n$ and $n\sim p$. The assumption $n-1 \leq p$ helps simplify results and
proofs; the
case $n-1\geq p$ is discussed in the Appendix. 

It is convenient to stack the data vectors as rows and 
form the data matrices 
$\Mx\in \R^{n \times d}$ and $\My\in \R^{n \times p}$
such that 
\begin{eqnarray}
\label{eq:datasetMatX}
\Mx_{i,*}=\V{\Vx_i}^{\T} \qquad i=1,\ldots,n
\end{eqnarray}
\begin{eqnarray}
\label{eq:datasetMatY}
\My_{i,*}=\V{\Vy_i}^{\T} \qquad i=1,\ldots,n \,.
\end{eqnarray}
Observe that both $X$ (resp. $Y$) is a multivariate data matrix with $n$ rows,
or samples, and $d$ (resp. $p$) columns, or features.
we define the aspect ratio of the matrix $\My$ as $\beta= {(n-1)}/{p}$.
\\

For simplicity, we will assume that the data $\{\V{\Vx_i}\}_{i=1}^n$ is centered around the origin, meaning that $H\cdot X=X$. 
Denote the singular values of the $n$-by-$d$ matrix $\Mx$  by
$x_1\geq \ldots \geq x_d \geq 0 $ and the singular values of the $n$-by-$p$ matrix $H\cdot
\My$ by
$y_1\geq \ldots \geq y_n \geq 0$.

\subsection{The classical MDS algorithm}
\label{Classical MDS}
The classical MDS algorithm, described briefly in the introduction,
is provided with two arguments. 
The first is $\Delta\in \M_{n\times n}$, a matrix that contains the pairwise
distances over the observable data, $\Delta_{i,j}=\norm{ \Vy_i -\Vy_j
}_{F}^2$.
The second is $r$, the dimension into which the data is to be embedded.
An equivalent formal description of classical MDS consists of the two following steps:
\begin{enumerate}
	\item Define the similarity matrix
	\begin{equation}
	\label{eq:S2}
	S= -\frac{1}{2} H\cdot \Delta \cdot H
	\end{equation}
\item Find a $n$-by-$r$ matrix $\hat{X}$ by  
	\begin{equation}
	\label{eq:Xhat}
		\Mxh^{MDS}=
	\underset{Z\in \M_{n\times r}: {\ } \tilde{V}_Z=I}{argmin} \parallel S- Z\cdot Z^{\T} \parallel_{F}^2
	\,,
	\end{equation}
	where $\tilde{V}_Z$ is the right singular vector matrix of $Z$, and use the
	rows of $\Mxh$ to embed the $n$ data points in $\R^r$. In other words, we
	optimize over matrices $Z\in \M_{n\times r}$ such that $Z^\T Z$ is diagonal.
	Note that 
 $\Mxh_{i,*}$ is the embedding coordinates of $i$-th datapoint. While
 $\Mxh$ depends on $r$, we leave this dependency implicit in the notation. Being
 the number of columns of $\Mxh$, it is easy to infer from context.\\
\end{enumerate} 
 
It is easy to verify that the MDS algorithm  mentioned in section \ref{S:1} is equivalent to the
one above. In fact, the MDS admits a more convenient formulation, as follows.
 Theorem \ref{thmMDSRealLoss} below states that
 $S= (HY) \cdot (H Y)^T$. As a result, we have the following lemma.

 \begin{lem} Let $S\in M_{n\times n}$ be a similarity matrix as in
	 \eqref{eq:S2}.  Then for any $i = 1,\ldots, rank(S)$ we have
	 \begin{enumerate} \item $\sqrt{d_i}=\Shy_i$, where $d_i$
			 is the $i$-th eigenvalue of $S$.  
		     \item There exists $q\in \{\pm 1\}$ such that
			 $$ \V{u}_i=q\cdot \V{w}_i $$ where $\V{u}_i$ is the i-th left
			 singular value of $H\cdot Y$ and $\V{w}_i$ is the i-th eigenvector
			 of S.
 	
 	\end{enumerate}
 \label{lemma:SandHY_SVD}
 	 \end{lem}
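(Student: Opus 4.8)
The plan is to derive everything from the factorization $S = (HY)(HY)^{\T}$, which the excerpt tells us is established in Theorem \ref{thmMDSRealLoss}, and then read off the relation between the eigendecomposition of $S$ and the singular value decomposition of $HY$. First I would write $HY = U \Sigma W^{\T}$ for a thin/full SVD, where $U \in O(n)$ carries the left singular vectors $\V{u}_1,\ldots,\V{u}_n$, $W$ carries the right singular vectors, and $\Sigma$ is the $n\times p$ matrix with $y_1 \geq \cdots \geq y_n \geq 0$ on its diagonal. Then $S = (HY)(HY)^{\T} = U \Sigma W^{\T} W \Sigma^{\T} U^{\T} = U (\Sigma \Sigma^{\T}) U^{\T}$, and $\Sigma\Sigma^{\T} = \mathrm{diag}(y_1^2,\ldots,y_n^2)$. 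Since this is an orthogonal diagonalization of the symmetric matrix $S$ with the diagonal entries in nonincreasing order, uniqueness of eigenvalues gives $d_i = y_i^2$, i.e.\ $\sqrt{d_i} = y_i = \Shy_i$ for every $i$, and in particular for $i = 1,\ldots,\mathrm{rank}(S)$ where $d_i > 0$; this is part 1.

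For part 2, I would invoke the standard fact that for a symmetric PSD matrix, an eigenvector associated with a \emph{simple} nonzero eigenvalue is determined up to sign. Concretely, if $d_i$ is a simple eigenvalue of $S$, then both $\V{u}_i$ (the $i$-th column of $U$ from the SVD of $HY$) and $\V{w}_i$ (the $i$-th eigenvector of $S$) are unit-norm vectors spanning the one-dimensional eigenspace $\ker(S - d_i I)$, hence $\V{u}_i = q \V{w}_i$ for some $q \in \{\pm 1\}$. I would state the simplicity caveat explicitly, or, following the likely convention of the paper, note that the claim is understood up to the usual ambiguity (rotation within an eigenspace / sign) inherent in any SVD or eigendecomposition, and that for indices $i$ with $d_i$ simple the sign formulation is exactly right. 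The consistency of the orderings (both $d_i$ and $y_i^2$ sorted in the same decreasing order) is what lets us match the $i$-th eigenvector of $S$ with the $i$-th left singular vector of $HY$ rather than an arbitrary permutation.

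The only real obstacle is bookkeeping about multiplicity and the precise meaning of ``the'' $i$-th eigenvector: if some nonzero eigenvalue of $S$ is repeated, then the individual eigenvectors are only defined up to an orthogonal rotation within the shared eigenspace, and a clean ``$\V{u}_i = q\V{w}_i$'' statement requires either assuming simplicity or interpreting the identification block-wise. I expect the paper either handles this by a genericity/almost-sure argument (under the noise model the relevant singular values are distinct with probability one) or simply elides it; in the plan I would flag it and otherwise treat the generic case where all the nonzero $d_i$ are simple, so that the sign-only ambiguity is the whole story. Everything else — the algebra $S = U\Sigma\Sigma^{\T}U^{\T}$ and matching sorted diagonals — is routine and follows immediately once Theorem \ref{thmMDSRealLoss} is in hand.
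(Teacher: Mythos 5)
Your proposal is correct and follows exactly the route the paper intends: the paper states this lemma as an immediate consequence of the factorization $S=(H\My)(H\My)^{\Tr}$ from Theorem \ref{thmMDSRealLoss} and does not spell out the details, whereas you fill in the standard SVD-to-eigendecomposition argument $S=U\Sigma\Sigma^{\Tr}U^{\Tr}$ and the matching of sorted diagonals. Your explicit flagging of the multiplicity/sign ambiguity is a point the paper silently elides, so nothing is missing.
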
 

 It follows that the MDS embedding \eqref{eq:Xhat} is given equivalently by
	 \begin{equation}
	     \label{Xhat_MDS:eq}
 	\Mxh^{MDS}= \sum_{i=1}^r \Shy_i q_i ~ \V{u_i} \V{e_i}^{\T}
 \end{equation} 
 where $e_i\in \R^r$ are the standard basis vectors and
 $q_1,\ldots,q_r\in\left\{ \pm 1 \right\}$.
 
\subsection{Formal analysis of MDS accuracy}
\label{FormalAnalysisMDSAccuracy}
MDS was originally developed for the noiseless scenario. Indeed, when no noise
is present, if the parameter $r$ provided equals to the latent dimension $d$, 
it is well known that 
\begin{equation}
\Mxh_{i,*} ^ \T= 
R \cdot \V{\Vx_i} \quad i=1,\ldots,n
\end{equation}
for some $R\in O(d)$. In other words, in the absence of noise, if $r=d$,
MDS recovers the latent (low-dimensional) data vectors $\V{\Vx_1},\ldots,\V{\Vx_n}$ exactly - up to a
global rotation. A proof of this fact is provided in Section \ref{S:6}, 
see Theorem \ref{thmNoiselessMDSPerfectRecon}. 

Clearly, in the presence of noise ($\sigma>0$) one cannot hope for exact
recovery, and some formal measure of ``MDS accuracy'' is required. Such a notion
of accuracy is 
traditionally obtained by introducing a loss function. Consider the following
loss function, which measures the ``proximity'' of the point cloud recovered by
MDS to the original, unknown point cloud that MDS aims to recover.

\begin{defin}[Similarity distance]
    \label{def1}
	Given two datasets $\{\V{a_i}\}_{i=1}^n\subset \R^d$ , $\{\V{b_i}\}_{i=1}^n\subset \R^r$, where $d,r\in \N$, define a generic distance between the two datasets by
	$$
	M_{n}\Big(\{\V{a_i}\}_{i=1}^n,\{\V{b_i}\}_{i=1}^n \Big)=   \underset{R\in O(l)}{min}  \sum_{i=1}^n 
	\bigg| \bigg|
	\left[ \begin{array}{c} \V{a_i}-\frac{1}{n} \sum_{i=1}^{n} \V{a_i} \\\V{0_{l-d}} \end{array} \right] - 
	R\cdot 
	\left[ \begin{array}{c}
	\V{b_i} -\frac{1}{n} \sum_{i=1}^{n} \V{b_i}
	\\\V{0_{l-d}} \end{array}   \right]  \bigg| \bigg|_2
	$$
	where $l=max(d,r)$.\\
	\end{defin}
\noindent Observe that using our matrix notation
this formula could be written as
	\[
	    M_n\Big(\{\V{a_i}\}_{i=1}^n,\{\V{b_i}\}_{i=1}^n \Big)= 
	\underset{R\in O(l)}{min}  
	\bigg| \bigg|
	H\cdot \left[A, 0_n\times (l-d)^+  \right] - 
	H\cdot \left[B, 0_n\times (l-d)^+  \right]\cdot R 
	\bigg| \bigg|_2\,.
	\]
	Here, $A\in \M_{n\times d}$ is a column
	stacking of the $\{a_i\}$, namely  $A_{i,*}=a_i^{\T}$. Similarly
$B\in\M_{n\times r}$ is a column stacking of $\{b_i\}$.

While the function $M_n$ depends on $(n,d,r)$, we suppress $d$ and $r$ in the
notation $M_n$ and leave them to be inferred from context. 
We first observe that in the noiseless case, classical MDS does indeed 
find the
minimum of this loss function:

\begin{lem}
	\label{lemNoiselessConfigMDSGetOptimalSimilarityMat}
	Let $\sigma=0$, let $n\in\N$ be arbitrary and let $d<n$.  
	Then for any $r\geq d$, the MDS solution from \eqref{eq:Xhat}
	with embedding dimension $r$ satisfies 
	\begin{equation}
	\Mxh^{MDS} \in  \underset{A\in M_{n\times r}}{argmin} {\ } M_n (A,\Mx)
	\end{equation}
	Moreover, $M_n(\Mxh^{MDS},\Mx)=0$.
	
\end{lem}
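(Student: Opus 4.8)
The plan is to reduce everything to the noiseless structure of $S$ and to the variational characterization \eqref{eq:Xhat}. First I would invoke the identity $S = (HY)(HY)^{\T}$ (Theorem \ref{thmMDSRealLoss}), which in the noiseless case $\sigma = 0$ becomes $S = (H\tilde{X})(H\tilde{X})^{\T}$. Since the rotation $R \in O(p)$ in \eqref{eq:aTilde} only acts on the $p$ columns and $\tilde X$ has at most $d \leq n$ nonzero columns worth of rank, $S$ has rank exactly $\mathrm{rank}(X) \le d$, and its nonzero eigenvalues are $x_1^2 \ge \ldots \ge x_d^2$. By Lemma \ref{lemma:SandHY_SVD}, the MDS embedding \eqref{Xhat_MDS:eq} with dimension $r \ge d$ is $\Mxh^{MDS} = \sum_{i=1}^{r} y_i q_i \V{u}_i \V{e}_i^{\T}$, but here $y_i = x_i$ for $i \le d$ and $y_i = 0$ for $i > d$, so only the first $d$ terms survive; the last $r-d$ columns of $\Mxh^{MDS}$ are zero. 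Thus $\Mxh^{MDS}$ is (up to signs) exactly the matrix of left singular vectors of $HX$ scaled by its singular values, padded with zero columns.

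Next I would show $M_n(\Mxh^{MDS}, \Mx) = 0$. Since $\{\V{\Vx_i}\}$ is assumed centered ($HX = X$), the definition of $M_n$ with $l = \max(d,r) = r$ reduces to finding $R \in O(r)$ with $H[X, 0_{n\times(r-d)}] = H\,\Mxh^{MDS}\,R$, i.e. $[X,0] = \Mxh^{MDS} R$ since both sides are already centered. Write the (thin) SVD $HX = X = U_X \Sigma_X V_X^{\T}$ with $U_X \in \M_{n\times d}$, $\Sigma_X = \mathrm{diag}(x_1,\ldots,x_d)$, $V_X \in O(d)$. From the previous paragraph, the first $d$ columns of $\Mxh^{MDS}$ are $x_i q_i \V{u}_i = q_i (U_X \Sigma_X)_{*,i}$ (the $\V{u}_i$ being the left singular vectors of $HY = HX$, hence of $U_X$), and the remaining columns vanish. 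So $\Mxh^{MDS} = U_X \Sigma_X Q \,[\,I_d \mid 0\,]$ where $Q = \mathrm{diag}(q_1,\ldots,q_d)$, and $[X, 0_{n\times(r-d)}] = U_X\Sigma_X V_X^{\T}[I_d\mid 0] = U_X \Sigma_X [V_X^{\T} \mid 0]$. Choosing $R = \big[\begin{smallmatrix} Q V_X^{\T} & 0 \\ 0 & I_{r-d}\end{smallmatrix}\big] \in O(r)$ gives $\Mxh^{MDS} R = U_X \Sigma_X Q[I_d\mid 0]\big[\begin{smallmatrix} Q V_X^{\T} & 0 \\ 0 & I_{r-d}\end{smallmatrix}\big] = U_X \Sigma_X [V_X^{\T}\mid 0] = [X, 0]$, using $Q^2 = I_d$. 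Hence the minimand in $M_n$ is $0$, so $M_n(\Mxh^{MDS}, \Mx) = 0$.

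Finally, since $M_n(A, \Mx) \ge 0$ for every $A \in \M_{n\times r}$ (it is a minimum of norms), $\Mxh^{MDS}$ attains the global minimum value $0$, which gives $\Mxh^{MDS} \in \operatorname{argmin}_{A} M_n(A, \Mx)$. This also recovers the stated fact that MDS reconstructs $\V{\Vx_1},\ldots,\V{\Vx_n}$ up to a global rotation (the $r=d$ case of Theorem \ref{thmNoiselessMDSPerfectRecon}), now with an explicit rotation $R$.

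The only genuine obstacle is bookkeeping: making precise the claim $y_i = x_i$ for $i \le d$ and $y_i = 0$ otherwise when $\sigma = 0$ — i.e. that $HY = HX$ and that the orthogonal embedding $R$ in \eqref{eq:aTilde} does not change the singular values — and tracking the sign ambiguities $q_i$ and the zero-column padding through the argmin in Definition \ref{def1}. One should also handle the degenerate case where some $x_i = 0$ (i.e. $\mathrm{rank}(X) < d$) or where singular values coincide, but since we only need a single $R$ making the difference vanish, any valid choice of singular vectors works and repeated singular values cause no difficulty. No random matrix theory is needed here; this is purely a linear-algebra identity.
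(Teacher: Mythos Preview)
Your proposal is correct and follows essentially the same route as the paper: compute the noiseless MDS embedding explicitly as $\sum_{i=1}^{d} x_i q_i \V{v}_i \V{e}_i^{\T}$ (zero-padded to $r$ columns), exhibit a rotation matching it to $[X,0]$, and conclude from $M_n\ge 0$. The paper's own proof is terser---it writes down the same embedding formula and then asserts $M_n(\Mxh^{MDS}_{r},X)=0$ for all $r\ge d$ by appeal to the proof of Theorem~\ref{thmNoiselessMDSPerfectRecon} (where the rotation $S^{\T}=\tilde V\,\mathrm{diag}(q_1,\ldots,q_d)$ is constructed for $r=d$)---whereas you spell out the block-diagonal rotation $R\in O(r)$ explicitly; but the underlying argument is the same.
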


It follows that an algorithm that tries to minimize $M_n(\cdot ,X)$ will agree with classical
MDS in the noiseless case.
We now argue that $M_n$ is a natural loss for measuring the MDS accuracy in the noisy
case as well. Observe that a reasonable loss for measuring MDS accuracy must satisfy the
following properties: 
\begin{enumerate}
	\item {\em Rotation invariance.} 
	We say that a loss $M_n$ is rotation-invariant if
	\begin{equation}
	\label{eq:RotationInvariantDef}
	M_n\left( Y_1, Y_2 \cdot R \right)= M_n\left( Y_1, Y_2  \right)
	\end{equation}
	for any two data matrices
	$Y_1,Y_2\in M_{n\times p}$ and
	any rotation matrix $R\in O(p)$.

	\item {\em Translation invariance.} 
	We say that  a loss $M_n$ is translation-invariant if
	\begin{equation}
	\label{eq:TranslationInvariantDef}
	M_n \left( Y_1,Y_2+ \left[ \begin{array}{c}
	\V{c}^T\\ 	\V{c}^T \\ \ldots \\ 	\V{c}^T
	\end{array} \right] \right) = 
	M_n \left( Y_1,Y_2 \right)
	\end{equation}
	for any two data matrices 
	$Y_1,Y_2\in M_{n\times p}$ and any translation vector $\V{c}\in \R^p$.
	
    \item {\em Padding invariance.} We say that a loss $M_n$ is
	padding-invariant if 
    	\begin{equation}
	\label{eq:ConstPaddingInvariantDef}
	M_n\left( \left[Y_1, \begin{array}{c}
	c^T \\ c^T \\ \ldots \\ c^T
	\end{array} \right], Y_2 \right) = M_n \left( Y_1 , Y_2 \right)
	\end{equation}
for any
$k\in \N$, $Y_1 \in M_{n\times p}$, $Y_2 \in M_{n\times (p+k)}$ and
$\V{c}\in \R^k$. 

\end{enumerate}

\noindent We now show that the loss $M_n$ from Definition \ref{def1} 
satisfies all these invariance properties.
\begin{lem}
	\label{lemSimlarityDistanceFromAxioms}
	Let $n,d$ and $r$ be such that $n>r$ and $n>d$. Then the function  $M_n: M_{n\times r}\times
	M_{n\times d} \to \R$ satisfies properties (1)-(3) above. 
\end{lem}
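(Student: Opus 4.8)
The plan is to verify the three invariance properties directly from the matrix form of the loss
\[
M_n\Big(\{\V{a_i}\},\{\V{b_i}\}\Big)=\underset{R\in O(l)}{\min}\Big|\Big| H\cdot[A,0]-H\cdot[B,0]\cdot R\Big|\Big|_2\,,
\]
using two elementary facts about the centering matrix $H$ and the rotation group: first, $H$ is idempotent and symmetric, and $H\cdot\V{1}=\V{0}$, so $H$ annihilates any matrix whose rows are all equal to a fixed vector $\V{c}^{\T}$; second, right-multiplication by an orthogonal matrix and the (block) zero-padding operation commute with each other in a way that preserves the Frobenius norm. In each case the strategy is the same: rewrite the left-hand side, absorb the extra structure into the minimization variable $R$ by a change of variables, and observe that we recover the right-hand side.

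First I would handle \emph{translation invariance}. Adding a matrix whose rows are all $\V{c}^{\T}$ to $Y_2$ means replacing $[B,0]$ by $[B,0]+\V{1}\V{c'}^{\T}$ for the appropriately padded $\V{c'}$; after left-multiplying by $H$ this added term vanishes because $H\V{1}=\V{0}$, so the loss is literally unchanged. \emph{Rotation invariance} is next: replacing $Y_2$ by $Y_2 R_0$ for $R_0\in O(p)$ replaces $[B,0]$ by $[B,0]\,\tilde R_0$ where $\tilde R_0=\mathrm{diag}(R_0,I)$ (or $R_0$ itself if no padding is needed on that side), and then inside the minimization we substitute $R\mapsto \tilde R_0^{-1}R$; since $\tilde R_0\in O(l)$ this is a bijection of $O(l)$ onto itself, so the minimum over $R$ is the same. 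The only mild care here is to match the block sizes of $\tilde R_0$ with the padding used in the definition of $M_n$, i.e. to check $l=\max(d,p)$ is consistent on both sides.

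\emph{Padding invariance} is the step I expect to be the main obstacle, since it changes the ambient dimension and hence the padding bookkeeping inside the definition of $M_n$. Here $Y_1$ is replaced by $[Y_1,\V{1}\V{c}^{\T}]\in M_{n\times(p+k)}$. I would argue that left-multiplication by $H$ again kills the appended block $H\V{1}\V{c}^{\T}=0$, so $H\cdot[[Y_1,\V{1}\V{c}^{\T}],0]$ and $H\cdot[Y_1,0]$ differ only by columns that are identically zero after centering; such columns contribute nothing to the Frobenius norm. What needs checking is that the value of $l=\max(\cdot,\cdot)$ entering the minimization, and therefore the size of the orthogonal group $O(l)$ one optimizes over, either coincides before and after padding, or else that enlarging $O(l)$ by padding does not change the minimum — this follows because an optimal $R$ can always be taken block-diagonal, acting as the identity on the zero columns. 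Once this dimensional consistency is pinned down, the equality of the two minima is immediate. I would close by noting that rotation and translation invariance are essentially the same two-line computations and only padding invariance requires the short argument that block-diagonal rotations suffice, which is standard.
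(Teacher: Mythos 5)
Your proposal is correct and follows essentially the same route as the paper's proof: rotation invariance via a change of variables using the group structure of $O(l)$, translation invariance because the centering matrix $H$ annihilates constant rows, and padding invariance by reducing constant padding to zero padding (which the definition already absorbs). The only point where you over-engineer slightly is the padding step: since $Y_2\in M_{n\times(p+k)}$ on both sides of \eqref{eq:ConstPaddingInvariantDef}, the value $l=p+k$ and hence the group $O(l)$ is identical before and after padding, so the block-diagonal argument is not actually needed.
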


\noindent In fact,  $M_n$ turns out to be a pseudo-metric on
$M_{n\times r}\times M_{n\times d}$ when $r=d$:
\begin{lem}
    \label{pseudo_metric:lem}
    Let $n\in \mathbb{N}$ and assume $r=d$.
    The similarity distance $M_n$ from Definition \ref{def1} satisfies the following
    properties:
    \begin{enumerate}
	\item $M_n(Y,Y)=0$
	\item $M_n(X,Y) = M_n(Y,X)$ 
	\item $M_n(X,Y)\leq M_n(X,Y)+M_n(Y,Z)$ 
    \end{enumerate}
\end{lem}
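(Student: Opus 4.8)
The plan is to reduce all three claims to a single structural observation. When $r=d$ we have $l=\max(d,r)=d$, so the zero-padding in Definition~\ref{def1} is vacuous and, writing $A,B\in M_{n\times d}$ for the row-stackings of $\{\V{a_i}\},\{\V{b_i}\}$, the loss takes the clean form
\[
M_n(\{\V{a_i}\},\{\V{b_i}\})=\min_{R\in O(d)}\phi\big(HA-HB\,R\big),
\]
where $\phi(M)=\sum_{i=1}^n\norm{M_{i,*}}_2$ is the objective of Definition~\ref{def1} rewritten with $H$ performing the centering. The only two facts about $\phi$ I would use are: (i) $\phi$ is a norm on $M_{n\times d}$, so in particular $\phi(-M)=\phi(M)$, $\phi$ obeys the triangle inequality, and $\phi(M)=0$ implies $M=0$; and (ii) $\phi$ is invariant under right multiplication by $O(d)$, i.e. $\phi(MR)=\phi(M)$, which holds row by row since $\norm{(MR)_{i,*}}_2=\norm{M_{i,*}R}_2=\norm{M_{i,*}}_2$. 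I would also note once and for all that the minimum over $R$ is attained, since $O(d)$ is compact and $R\mapsto\phi(HA-HB\,R)$ is continuous; fix such minimizers for the arguments below.

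Granting this, claim (1) is immediate: take $R=I$ to get $\phi(HY-HY)=\phi(0)=0$, and $M_n\ge0$ trivially. For claim (2), for every $R\in O(d)$ apply (ii) with $R^\top$ and then $\phi(-\cdot)=\phi(\cdot)$:
\[
\phi\big(HX-HY\,R\big)=\phi\big((HX-HY\,R)R^\top\big)=\phi\big(HX\,R^\top-HY\big)=\phi\big(HY-HX\,R^\top\big),
\]
and since $R\mapsto R^\top$ is a bijection of $O(d)$, minimizing over $R$ on both sides gives $M_n(X,Y)=M_n(Y,X)$.

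For the triangle inequality (claim (3)), let $R_1$ attain $M_n(X,Y)=\phi(HX-HY\,R_1)$ and $R_2$ attain $M_n(Y,Z)=\phi(HY-HZ\,R_2)$. The key is the identity
\[
HX-HZ\,(R_2R_1)=\big(HX-HY\,R_1\big)+\big(HY-HZ\,R_2\big)R_1,
\]
obtained by expanding the right-hand side. Applying the triangle inequality for $\phi$ and then (ii) to the second summand gives
\[
\phi\big(HX-HZ\,(R_2R_1)\big)\le\phi(HX-HY\,R_1)+\phi\big((HY-HZ\,R_2)R_1\big)=M_n(X,Y)+M_n(Y,Z),
\]
and since $R_2R_1\in O(d)$ the left side is at least $M_n(X,Z)$, so $M_n(X,Z)\le M_n(X,Y)+M_n(Y,Z)$.

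I do not expect a genuine obstacle; the work is all bookkeeping. The points needing care are: checking that at $r=d$ the padding is vacuous so the clean matrix form applies; verifying right-$O(d)$-invariance of $\phi$, which is precisely what legitimizes the substitutions $R\mapsto R^\top$ and $R\mapsto R_2R_1$ and would fail for an arbitrary loss; and invoking compactness of $O(d)$ so that the minimizers $R_1,R_2$ used in the triangle-inequality step exist. It is worth observing that $M_n$ is only a pseudo-metric, not a metric: $M_n(X,Y)=0$ forces $HX=HY\,R$ for some $R\in O(d)$, i.e. the clouds coincide after centering and a rotation, which need not imply $X=Y$.
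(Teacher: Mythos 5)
Your proposal is correct and follows essentially the same route as the paper's proof: property (1) via $R=I$, symmetry via right-orthogonal invariance of the objective together with the bijection $R\mapsto R^\top$ of the orthogonal group, and the triangle inequality by inserting the minimizer for one pair and composing rotations. You are somewhat more careful than the paper (explicitly carrying the centering matrix $H$, stating the norm properties of the objective, and invoking compactness of $O(d)$ for attainment of the minima), but the underlying argument is identical.
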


\noindent In summary, we arrived at the following natural definition for a loss function measuring the
accuracy of MDS and MDS-type algorithms, which satisfied the fundamental
properties of an embedding accuracy loss function.
\begin{defin}
	\label{similarityLoss}
	Let $\Mx$ be a dataset with $n$ points in $\R^d$ 
	as in \eqref{eq:datasetMatX} and let $\My$ be a noisy, high dimensional
	version as in \eqref{eq:datasetMatY}. Let $\Mxh$ be an embedding in $\R^r$ 
	constructed from $H\cdot \My$, where $r$ is an embedding dimension chosen by the
	scientist or the embedding algorithm used. We use the loss function
	\[
	L_n(\Mxh|X) = M_n^2(\Mxh,\Mx)
	\]
	to measure the accuracy of the embedding $\Mxh$.
\end{defin}

\subsection{Asymptotic model}
\label{AsymptoticModel}
We now find ourselves in a familiar decision-theoretic setup, and familiar
questions naturally arise:
How does one choose an embedding algorithm with favorable loss $L_n$? How does
the classical MDS algorithm compare, in terms of $L_n$, to alternative
algorithms? Is there an algorithm that achieves optimal $L_n$ in some
situations? 

Unfortunately, in general, analysis of the loss function $L_n$ is a difficult problem in the
presence of ambient noise due to the complicated joint distribution of the
singular values of $\My_n$ \cite{Forrester2010,DonohoGavish2013}. Recently, a line of works building on
Johnstone's Spiked Covariance Model \cite{johnstone2001distribution}, an asymptotic model
which considers instead a sequence of increasingly larger matrices, has yielded
exact forms of asymptotically optimal estimators, which were shown to be useful
even in relatively small matrices in practice
\cite{Benaych-Georges2012,Shabalin2013,Donoho2013_4_3,Gavish2017,donoho2013optimal}.\\

Following this successful approach, in this paper
we consider a sequence of increasingly larger embedding problems.
In the $n$-th problem, we observe $n$ vectors in dimension $p_n$, with $p$
growing proportionally to $n$. 
The original (low-dimensional) data matrix will be
denoted $\Mx_n\in M_{n\times d}$ (as in \eqref{eq:datasetMatX}), and the observed data matrix will
be denoted  $\My_n \in M_{n \times p_n}$
 (as in \eqref{eq:datasetMatY}).
Let $\My_n = [\Mx_n, 0_{n\times (p-d)}] \cdot R_n +Z_n$
where
$R_n\in O(p_n)$ and $Z_n,\My_n \in M_{n \times p_n}$  
satisfy the following properties: 

\begin{enumerate}
	\item \textit{Invariant white noise.} 
		The entries of $Z_n$ are i.i.d distributed, and drawn from a
		distribution with zero mean, variance $\sigma^2/p_n$, and finite
		fourth moment.		
		We assume that this distribution is orthogonally invariant in the sense that for any $A\in O(n)$ and $B\in O(p_n)$ the matrix $A\cdot Z_n\cdot B$ would follow the same distribution as $Z_n$. 
	
	\item \textit{Fixed signal column span($\Sx_1,\ldots, \Sx_d$).} 
		Let $d>0$, choose $\V{\Sx} \in \R^d$ with coordinates
		$\V{\Sx}=(\Sx_1,\ldots,\Sx_d)$ such that $\Sx_1 >  \ldots >\Sx_d > 0$. Assume for any $n\in \N$
	\begin{equation}
	\label{eq:ConfigurationX}
	\Mx_n= H\cdot \Mx_n =U_n \cdot [diag(\Sx_1,\ldots,\Sx_d), 0_{d\times (n-d) }]^{\T} \cdot \tilde{U_n}^{\T}
	\end{equation}
	is an arbitrary singular value decomposition of $\Mx_n$, where $U_n\in O(n)$ and $\tilde{U}_n\in O(d)$ are arbitrary and unknown 
	orthogonal matrices. Additionally, we preserve the unbiased assumption on the original data.
	
	\item \textit{Asymptotic aspect ratio} - Let $p_n$ be an increasing monotone
		sequence over $\N$, such that $\lim_{n\to\infty}
		{(n-1)}/{p_n}= \beta\in (0,1]$. We consider the case 
		$\beta\in[1,\infty)$ in the appendix section, under which we get the exact same results.

\end{enumerate}

Let $\Delta_n$ be the Euclidean distance matrix on the $n$-th problem, so that 
\[
    (\Delta_n)_{i,j} = \norm{\V{b}_{i,n}-\V{b}_{j,n}}^2
\]
where $\V{b}_{i,n}$ is the $i$-th row of $Y_n$.

Let $\Mxh$ denote an embedding algorithm, or more precisely a sequence of
embedding algorithm, one for each dimension $n$. In the $n$-th problem the
algorithm is given the input $\Delta_n$.
We abuse notation by using the symbol
$\Mxh$ to denote embedding regardless of data dimensions $n$ and $p_n$, so that
$\Mxh(\Delta_n)$ is the result of the embedding algorithm applied to the data $\Delta_n$.
Define the asymptotic loss of $\Mxh$ at $\mathbf{x}$ by
\[
	L(\Mxh|\mathbf{x}) \equiv \lim_{n\to\infty} 
	L_n(\Mxh(\Delta_n)|\Mx_n)
\]

assuming this limit exists. Following Lemma \ref{lem:MDSLossLemma} and Lemma \ref{lem:LossSeperateSigValsLemma}, it is easy that $L(\hat{X}|\textbf{x})$ is well defined when $\hat{X}$ is a shrinkage estimator as defined next. \\
In this asymptotic setting, the decision-theoretical problem becomes simple and
well-posed: Nature chooses the value $d$
and the vector $\V{\Sx}\in\R^d$, both unknown to the scientist. The scientist
chooses the embedding algorithm $\Mxh$, which includes a choice of the
embedding dimension $r$. After both ``players'' move, the ``payoff''
is the asymptotic loss   $L(\Mxh|\V{\Sx})$.

\subsection{Shrinkage} \label{Shrinkage} With the loss function, measuring
embedding accuracy, at hand,
the questions mentioned in the introduction become more
concrete: How does one design an embedding algorithm $\hat{X}$ with appealing
loss for a wide range of possible $\mathbf{x}$?  Is there an optimal choice of
$\hat{X}$ in some sense? As we will see, under the asymptotic loss $L$, both
these questions admit simple, appealing answers. \\

\noindent {\em Truncation estimators.} While the classical MDS estimator requires an estimate of the embedding
dimension $r$, we are interested in algorithms that do not require a-priori
knowledge or estimation of the embedding dimension. Let us define a ``padded''
version of the classical MDS algorithm from \eqref{Xhat_MDS:eq} by
\begin{eqnarray}
\label{eq:TSVDForm}
	\Mxh^{r}= \sum_{i=1}^{r} \Shy_i q_i {\ }\V{u_i}{\ } \V{e_i}^{\T}  
\end{eqnarray}
with $e_i\in \R^p$ and
$q_1,\ldots,q_r\in\left\{ \pm 1 \right\}$. Clearly, $\Mxh^{r}$ is just a zero-padded version of 
$\hat{X}^{MDS}$, in the sense that
\[
    \hat{X}^{r}(\Delta)= \left[ \hat{X}^{MDS}(\Delta), 0_{n\times (p-r)}
    \right]\,.
\]
The estimator $\Mxh^{r}$ acts by {\em truncating} the data eigenvalues $y_i$,
 keeping      only the $r$ largest ones; let us call it the Truncated SVD (TSVD)
 estimator. 
As our loss function is invariant under zero padding of the data matrix, it is
harmless to use $\Mxh^r$ instead of $\Mxh^{MDS}$. Below, we take the TSVD
estimator $\Mxh^r$ to represent the classical MDS.

In the introduction we mentioned the inherent conundrum involved in choosing the
embedding dimension $r$ for classical MDS. Suppose that there exists a function
$r^*$ mapping a pair $(\sigma,\Delta)$, where $\sigma$ is the noise level and
$\Delta$ is the observed distance matrix, to an ``optimal'' choice of embedding
dimension $r$ for classical MDS. Formally,
\begin{defin}[Optimal TSVD Estimator]
    \label{r_star:def}
	Let $\sigma>0$.
	Assume that there exists $\hat{r}^*: \R_{+}\times M_{n\times p} \rightarrow \N$
	such that 
	\[
	    L(\hat{X}^{\hat{r}^*}|\V{x})\leq L(\hat{X}^{r}|\V{x})
	\]
	for any $r \in \N$, $d\in N$ and $x\in \R^d$.
	Then $\hat{r}^*$ is called the optimal truncation value, and 
	$\hat{X}^{\hat{r}^*}$ is called the
	optimal TSVD estimator.
\end{defin}
\noindent We abuse notation by using $\hat{r}^*$ instead of
$\hat{r}^*(\sigma,\Delta)$.
Clearly, if such a function $\hat{r}^*$ exists, it would provide a definitive, 
disciplined manner
of choosing the embedding dimension $r$ for classical MDS. As we will see in the
next section, $\hat{r}^*$ does indeed exist and admits a simple closed form.
\\~\\
\noindent{\em Hard thresholding estimators.} The classical MDS estimator, in the form  $\Mxh^r$, is equivalent to a
different estimator, one which uses {\em hard thresholding} of the data singular
values. For $\lambda>0$, define 
\begin{eqnarray}
\label{eq:SVHTForm}
\Mxh^{\lambda}= \sum_{i=1}^{n} \Shy_i q_i\cdot 1_{[\Shy_i>\lambda]}  {\ }\V{u_i}{\ }
\V{e_i}^{\T}\,,
\end{eqnarray}
where $e_i \in \R^p$ and
$q_1,\ldots,q_{n}\in\left\{ \pm 1 \right\}$.
While $\Mxh^r$ keeps the $r$ largest 
data singular values, regardless of their size,  the estimator $\Mxh^{\lambda}$
keeps all the data singular values above the hard threshold $\lambda$. We call 
$\Mxh^{\lambda}$ a Singular Value Hard Threshold estimator, or {\em SVHT}.

It is easy to check that the family of estimators $\{\Mxh^r\}$ (with
data-dependent $r$) is in fact
equivalent to family of estimators $\{\Mxh^{\lambda}\}$ (with value of $\lambda$
fixed a-prior). Formally,
\begin{lem}
	\label{lemSVHT_TSVD}
	Let $Y\in M_{n\times p}$ be an observed data matrix, and $\Delta\in \M_{n\times n}$ be its corresponding euclidean distance matrix. For any TSVD estimator $\Mxh^{r}$, there exists a SVHT estimator $\Mxh^{\lambda}$ s.t. 
	\begin{eqnarray*}
	\Mxh^r (\Delta)= \Mxh^{\lambda}(\Delta)
	\end{eqnarray*}
	and vise versa. 
\end{lem}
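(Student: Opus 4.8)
The plan is to exhibit an explicit correspondence between the two families of estimators acting on a fixed data matrix. Given an observed $Y\in M_{n\times p}$, the distance matrix $\Delta$ determines, via Lemma~\ref{lemma:SandHY_SVD}, the singular values $\Shy_1\geq\cdots\geq\Shy_n\geq 0$ of $H\cdot Y$ and the corresponding left singular vectors $\V{u}_i$ (up to sign); these are exactly the ingredients appearing in both \eqref{eq:TSVDForm} and \eqref{eq:SVHTForm}. So the whole claim reduces to the statement that, for a fixed decreasing sequence $\Shy_1\geq\cdots\geq\Shy_n\geq 0$, the set of subsets $\{1,\dots,r\}$ (prefixes) coincides with the set of ``superlevel sets'' $\{i:\Shy_i>\lambda\}$, ranging over $r\in\{0,\dots,n\}$ and $\lambda>0$ respectively, and moreover that on a matching pair of index sets the two formulas produce the identical matrix.

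First I would handle the direction TSVD $\Rightarrow$ SVHT. Fix $r$. If $r=0$ the zero estimator is obtained by any $\lambda>\Shy_1$. If $r\geq 1$, choose $\lambda$ with $\Shy_{r+1}<\lambda<\Shy_r$ when $\Shy_{r+1}<\Shy_r$; a slight annoyance is the possibility of ties $\Shy_{r+1}=\Shy_r$, in which case no hard threshold can ``cut between'' equal values — but then I note that $\Shy_r=\Shy_{r+1}$ forces the ambiguous coordinates to contribute terms of equal magnitude, and one can either invoke genericity of $Y$ (the singular values of $H Y$ are almost surely distinct under the noise model) or, more cleanly, observe that the estimator $\Mxh^r$ is only defined up to the sign/permutation freedom in $q_i$ and in the ordering of equal singular values, so one may reorder so that the retained block is a genuine prefix in value. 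With $\lambda\in(\Shy_{r+1},\Shy_r)$ (and $\lambda<\Shy_n$-handling when $r=n$, any $0<\lambda<\Shy_n$, or $\lambda\to 0^+$ if $\Shy_n=0$), the indicator $1_{[\Shy_i>\lambda]}$ equals $1$ for $i\le r$ and $0$ otherwise, so \eqref{eq:SVHTForm} collapses termwise to \eqref{eq:TSVDForm} once we match the sign choices $q_i$. That gives $\Mxh^r(\Delta)=\Mxh^\lambda(\Delta)$.

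For the converse, fix $\lambda>0$ and set $r=\#\{i:\Shy_i>\lambda\}$. Because the $\Shy_i$ are sorted in decreasing order, $\{i:\Shy_i>\lambda\}=\{1,\dots,r\}$ exactly, so the SVHT sum in \eqref{eq:SVHTForm} is supported on $i=1,\dots,r$ and again reduces termwise to \eqref{eq:TSVDForm} with the same $\V{u}_i$, $\Shy_i$, $q_i$ and $e_i\in\R^p$. Hence $\Mxh^\lambda(\Delta)=\Mxh^r(\Delta)$, completing the equivalence. The only real subtlety, and the step I would spend the most care on, is the tie-breaking issue just mentioned: making precise that ``TSVD with parameter $r$'' is well-defined only as an equivalence class under reordering of equal singular values, so that picking the prefix to be value-monotone is legitimate; with that convention the map $r\mapsto\lambda$ and $\lambda\mapsto r$ above are genuine inverses on the relevant discrete sets and the lemma follows.
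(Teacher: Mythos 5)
The paper gives no proof of this lemma (it is dismissed as ``easy to check''), and your argument is exactly the intended one: for a sorted sequence $\Shy_1\geq\cdots\geq\Shy_n$ the superlevel sets $\{i:\Shy_i>\lambda\}$ are precisely the prefixes $\{1,\dots,r\}$, and on a matching index set the two formulas \eqref{eq:TSVDForm} and \eqref{eq:SVHTForm} agree termwise. One caution on your tie-breaking discussion: of your two proposed fixes, only the genericity one actually works. If $\Shy_r=\Shy_{r+1}$, reordering the tied singular values cannot rescue the correspondence, because $\Mxh^r$ retains exactly $r$ components while every $\Mxh^\lambda$ retains either all or none of the tied group; these produce matrices of different rank, hence genuinely different estimates, no matter how the tied block is permuted. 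So the clean statement is either that the equivalence holds for all $Y$ whose nonzero singular values of $H\My$ are distinct (which is the almost-sure event under the noise model of Section \ref{AsymptoticModel}), or one must define the TSVD family to only admit values of $r$ at which the sorted sequence strictly decreases. With that caveat your proof is complete and is the natural argument.
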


\noindent How should one choose the hard threshold $\lambda$? 
Suppose that there exists an ``optimal'' value
$\lambda^*>0$ 
for which the asymptotic loss is always minimized. Formally,
\begin{defin} [Optimal SVHT Estimator]
    \label{lambda_star:def}
Let $\sigma>0$.
Assume that there is a value $\lambda^*\in[0,\infty)$, which depends on $\sigma$
and the asymptotic aspect ration $\beta$, 
    such that  
    	\[
	 L(\Mxh^{\lambda^*}|\V{x}) \leq L(\Mxh^{\lambda}|\V{x})
     \]
     for any 
     $ \lambda>0$, $ d\in \N$ and  $\V{x}\in \R^d$.
     Then $\lambda^*$ is called an optimal hard threshold, and 
     $\Mxh^{\lambda^*}$ is called the optimal SVHT. 
\end{defin}
\noindent For notational simplicity, write $\lambda^*$ instead of $\lambda^*_{\beta,\sigma} $.
As we will see in the
next section, $\lambda^*$ does indeed exist and admits a simple closed form.

The definitions imply that if an optimal hard threshold exists, then
it gives rise to an optimal truncation value $\hat{r}^*$.
 Formally, it is easy to
verify the following.
\begin{defin} [SVHT Optimal Cutoff Function]
    \label{r_star_lambda_star:lem}
    Let $\sigma>0$. If an optimal hard threshold $\lambda^*$ exists, then 
   	\[
	\hat{r} (\Delta) = \#\{i\in [n] {\ } | {\ } y_i >\lambda^*  \}
    \]
    is an optimal truncation value.	
\end{defin}
~\\
{\em General shrinkage estimators.} 
The SVHT estimators \eqref{eq:SVHTForm} is a special case of a more general
family, which we might call singular value shrinkage estimators. For a
non-decreasing function
$\eta:[0,\infty)\to[0,\infty)$, define
\begin{eqnarray}
\label{eq:EtaForm}
\Mxh^{\eta}= \sum_{i=1}^{n} \eta(\Shy_i)q_i {\ }\V{u_i}{\ } \V{e_i}^{\T}
\end{eqnarray}
where again $e_i \in \R^p$ and
$q_1,\ldots,q_n\in\left\{ \pm 1 \right\}$. Observe that $\hat{X}^\lambda$ is obtained by
taking $\eta$ to be the hard threshold nonlinearity,
$\eta(y)=y\cdot\mathbf{1}_{y>\lambda}$.

How should one choose the shrinker $\eta$? Suppose that there exists a special
``optimal'' shrinker $\eta^*$ for which the asymptotic loss is always minimized,
regardless of the underlying signal. Formally,
\begin{defin} [Optimal Continuous Estimator]
	\label{eta_sta:def}
    Let $\sigma>0$ and let $Con$ denote the family of continuous shrinkers
    $\eta:[0,\infty)\to[0,\infty)$.  
    If there exists a shrinker $\eta^*\in Con$
    for which 
	\[
L(\Mxh^{\eta^*}|\V{x}) \leq L(\Mxh^{\eta}|\V{x})
    \]
    for any $\eta\in Con$, $d\in \N$ and $x\in \R^d$
    then we call $\eta^*$ an optimal shrinker.
\end{defin}
\noindent Here too we abuse notation by writing $\eta^*$ for
$\eta^*_{\beta,\sigma} (\Delta)$. As we show in the next section, the optimal
shrinker $\eta^*$ does exist and admits a simple form.

\section{Results}
\label{S:3}	

For simplicity, we state our results first for the case where the noise level 
$\sigma$ is known. The case of $\sigma$ unknown is deferred to the end of
the section.\\
As seen in lemma \ref{lemNoiselessConfigMDSGetOptimalSimilarityMat},
classical MDS achieves zero loss, or perfect reconstruction
of the original data, in the noiseless case $\sigma=0$. 
Our first main result is the exact asymptotic loss incurred by classical MDS in
the presence of noise.
\begin{thm}
	\label{thmThmMDSExplicitLoss}
	The asymptotic loss of classical MDS with embedding dimension $r$ is given
	by
		\begin{eqnarray}
	\label{eq:ThmMDSExplicitLoss}
	L(\Mxh^{r}|\V{\Sx}) 
	&\overset{a.s.}{=}&
	\Bigg(\sum_{i=1}^{min(t,r)} \bigg(\sqrt{\Sx_i^2+\sigma^2}-\sqrt{\frac{\Sx_i^4-\beta\cdot\sigma^4}{\Sx_i^2}}\bigg)^2 +\frac{2\cdot\beta\cdot \sigma^4}{\Sx_i^2} +
	\beta\cdot \sigma^2\Bigg) \nonumber \\
	&+&\left(\sum_{j=min(t,r)+1}^{d} \Sx_i^2\right)+(r-t)^{+}\cdot \sigma^2\cdot (1+\sqrt{\beta})^2
	\end{eqnarray}

	where $t=\#\{i\in[d] {\ } s.t.{\ } \Sx_i>\sigma \cdot
	\beta^{\frac{1}{4}}\}$. 
\end{thm}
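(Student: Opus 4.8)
The plan is to reduce the computation of $L(\hat X^r\mid\mathbf{x})$ to the asymptotic behavior of the singular values and singular vectors of $HY_n$ under the spiked model, and then substitute into the explicit form of the loss. First I would invoke Lemma~\ref{lemma:SandHY_SVD} and the identity $S=(HY)(HY)^\T$ to rewrite the TSVD estimator $\hat X^r = \sum_{i=1}^r y_i q_i\,\mathbf{u}_i\mathbf{e}_i^\T$ purely in terms of the top $r$ singular triples $(y_i,\mathbf{u}_i,\tilde{\mathbf{u}}_i)$ of $HY_n$. The key structural input is the BBP/spiked-model asymptotics (as in \cite{Baik2005,Paul2007,Benaych-Georges2012}): for each $i\le d$, if the population singular value satisfies $\mathsf{x}_i>\sigma\beta^{1/4}$ (equivalently the signal is above the phase transition), then $y_i \to \sqrt{(\mathsf{x}_i^2+\sigma^2)(1+\beta\sigma^2/\mathsf{x}_i^2)}$ almost surely, and the cosine of the angle between $\mathbf{u}_i$ (resp.\ $\tilde{\mathbf{u}}_i$) and the corresponding population left (resp.\ right) singular vector converges a.s.\ to an explicit limit $c(\mathsf{x}_i)$ (resp.\ $\tilde c(\mathsf{x}_i)$), while the "cross" overlaps between distinct signal components and between signal and noise bulk vanish; for $i>t$ the displayed value of $y_i$ collapses to the bulk edge $\sigma(1+\sqrt\beta)$ and the overlap with the signal subspace is $0$. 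I would record these limits, citing the earlier lemmas (or the referenced literature) rather than re-deriving them.

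Next I would expand the loss. By Definition~\ref{similarityLoss}, $L_n(\hat X^r\mid X_n) = M_n^2(\hat X^r, X_n)$, and using the matrix form of $M_n$ together with $HX_n=X_n$ this is $\min_{R\in O(l)}\|\,[X_n,0]-[\,\hat X^r,0\,]R\,\|_F^2$. Writing both matrices in the common $n\times l$ frame and optimizing over the orthogonal $R$ gives the standard Procrustes value $\|X_n\|_F^2+\|\hat X^r\|_F^2 - 2\sum (\text{singular values of }X_n^\T\hat X^r)$. Now $\|X_n\|_F^2=\sum_{i=1}^d \mathsf{x}_i^2$, $\|\hat X^r\|_F^2=\sum_{i=1}^r y_i^2$, and the cross term $X_n^\T\hat X^r = \sum_i \mathsf{x}_{(\cdot)}\,y_i\,q_i\,(\text{inner products of population and empirical singular vectors})$ is, by the overlap asymptotics above, asymptotically diagonal with entries $\mathsf{x}_i y_i c(\mathsf{x}_i)\tilde c(\mathsf{x}_i)$ for $i\le\min(t,r)$ and $0$ otherwise (the signs $q_i$ can be chosen to make each term nonnegative, which is exactly what the Procrustes minimization does). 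Substituting the explicit limits of $y_i$, $c$, $\tilde c$ and simplifying algebraically should produce, term by term: for $i\le\min(t,r)$ the quantity $\bigl(\sqrt{\mathsf{x}_i^2+\sigma^2}-\sqrt{(\mathsf{x}_i^4-\beta\sigma^4)/\mathsf{x}_i^2}\bigr)^2+2\beta\sigma^4/\mathsf{x}_i^2+\beta\sigma^2$; for $\min(t,r)<j\le d$ the leftover $\mathsf{x}_j^2$ (since those signal directions are not captured at all); and for each of the $(r-t)^+$ axes kept beyond the phase transition, a pure-noise contribution $y_i^2\to\sigma^2(1+\sqrt\beta)^2$. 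Collecting these yields \eqref{eq:ThmMDSExplicitLoss}.

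The main obstacle, I expect, is twofold. The genuinely delicate part is rigorously justifying the almost-sure convergence of the \emph{cross} inner products $\langle \mathbf{u}_i,\text{(population vector }j)\rangle$ for $i\ne j$ to zero and of the signal/noise overlaps, and ensuring uniformity so that the Procrustes-minimized sum of singular values of $X_n^\T\hat X^r$ actually converges to $\sum_i \mathsf{x}_i y_i c_i\tilde c_i$ — this requires either the delocalization/orthogonality results from the spiked-model references or a separate argument (likely packaged into one of the earlier lemmas such as Lemma~\ref{lem:MDSLossLemma} or Lemma~\ref{lem:LossSeperateSigValsLemma} referenced in the text). The second, more bookkeeping-level obstacle is the algebraic simplification: verifying that the limiting expression $\mathsf{x}_i^2 + y_i^2 - 2\mathsf{x}_i y_i c_i\tilde c_i$, after inserting the closed forms for $y_i$, $c_i$, $\tilde c_i$ in terms of $(\mathsf{x}_i,\sigma,\beta)$, collapses exactly to the three-term formula above — this is routine but error-prone, and I would do it by first rewriting everything in terms of the scaled variable $\mathsf{x}_i/\sigma$ and the ratio $\beta$, then matching against the stated formula. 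The role of the threshold $t=\#\{i:\mathsf{x}_i>\sigma\beta^{1/4}\}$ is simply to separate the "detected" signal directions (which contribute the first sum) from the "lost" ones (which contribute their full energy $\mathsf{x}_i^2$), so the case split in the statement falls out of the BBP phase transition automatically.
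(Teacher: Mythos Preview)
Your overall strategy---expand the loss via Procrustes, invoke the BBP/spiked-model limits for the singular values and left-singular-vector overlaps of $HY_n$, then simplify algebraically---is exactly the paper's approach (packaged there as Lemma~\ref{lem:MDSLossLemma} for the Procrustes expansion and asymptotics, and Lemma~\ref{lem:MDSLossLemmaOverBulk} for the algebra).

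There is, however, a genuine error in your cross term. You write that the asymptotic diagonal entries of $X_n^\T\hat X^r$ are $x_i\,y_i\,c(x_i)\,\tilde c(x_i)$, and correspondingly that the per-coordinate loss is $x_i^2+y_i^2-2x_iy_ic_i\tilde c_i$. The factor $\tilde c$ should not be there: the correct expression is $x_i^2+y_i^2-2x_iy_i\,c(x_i)$, involving only the \emph{left} overlap. The reason is that the right singular vectors of $\hat X^r$ are the standard basis vectors $\mathbf{e}_i$ (by construction of the TSVD estimator \eqref{eq:TSVDForm}), and the right singular vectors of $X_n$ are the deterministic $\tilde{\mathbf v}_j\in\R^d$. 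The Procrustes minimization over $R\in O(l)$ aligns these two orthonormal systems perfectly, so the inner product $\langle R^\T\mathbf e_i,\tilde{\mathbf v}_j\rangle$ can be made exactly $\pm\delta_{ij}$ (this is the explicit minimizer \eqref{eq:optimalR} in the paper). The empirical right singular vectors $\tilde{\mathbf u}_i\in\R^{p_n}$ of $HY_n$---and hence any overlap $\tilde c$---never enter the MDS loss at all. You are implicitly importing the standard matrix-denoising loss $\|\hat X-\tilde X\|_F^2$ in $\R^{n\times p}$, where both left and right overlaps appear; the MDS similarity loss is different precisely because it quotients out the right rotation.

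If you carry through your algebra with the extra $\tilde c$ factor you will not recover \eqref{eq:ThmMDSExplicitLoss}; with the correct cross term $2x_iy_ic(x_i)$ the simplification in Lemma~\ref{lem:MDSLossLemmaOverBulk} goes through as stated.
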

~\\
As discussed in subsection \ref{Shrinkage} above,  for a specific dataset,
classical MDS is equivalent to
singular value hard thresholding (SVHT) at a  hard threshold that depends on the
data. An obvious way to improve the classical MDS algorithm is to consider a carefully
calibrated choice of hard threshold. 
Our next result shows that, in fact, an asymptotically optimal choice of hard
threshold exists -- and even admits a simple closed form.

\begin{thm}
	\label{thmmdsSVHTShrinker}	
	There exists an unique optimal hard threshold 
	$\lambda^*$ 
	(Definition \ref{lambda_star:def}), and its value 
	 is given by
	\begin{eqnarray}
	\label{eq:SVHTOptimalShrinker}
	\lambda=\sigma\cdot
	\sqrt{\left(\sqrt{a}+\frac{1}{\sqrt{a}}\right)\left(\sqrt{a}+\frac{\beta}{\sqrt{a}}\right)}
	\end{eqnarray}
	where $a$ is the unique positive root of 
	\[-3\cdot a^3+a^2\cdot (2\beta +1) +a\cdot (\beta^2 +6\cdot
	\beta)+\beta^2=0\,.\]
	Moreover, 
	$\lambda^*> \sigma\cdot(1+\sqrt{\beta})$.
\end{thm}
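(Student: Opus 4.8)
The plan is to combine the explicit loss of Theorem~\ref{thmThmMDSExplicitLoss} with the TSVD--SVHT equivalence of Lemma~\ref{lemSVHT_TSVD}, so that the search for $\lambda^{*}$ becomes a one-dimensional, per-spike problem. Write $y(x)^{2}=(x^{2}+\sigma^{2})(x^{2}+\beta\sigma^{2})/x^{2}$, which is continuous and strictly increasing on $(\sigma\beta^{1/4},\infty)$ with $y(\sigma\beta^{1/4})=\sigma(1+\sqrt{\beta})$. From the spike-localization facts underlying Theorem~\ref{thmThmMDSExplicitLoss}, the $i$-th singular value of $H\My_{n}$ converges a.s.\ to $y(x_{i})$ when $x_{i}>\sigma\beta^{1/4}$, while all other singular values stay asymptotically at or below the bulk edge $\sigma(1+\sqrt{\beta})$. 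Hence for a fixed $\lambda>\sigma(1+\sqrt{\beta})$ the estimator $\Mxh^{\lambda}$ eventually keeps exactly the spikes in $K_{\lambda}=\{\,i\le d:\ x_{i}>\sigma\beta^{1/4},\ y(x_{i})>\lambda\,\}$, which are all supercritical, so by Lemma~\ref{lemSVHT_TSVD} it coincides with $\Mxh^{r}$ for $r=|K_{\lambda}|\le t$, and the $r\le t$ case of Theorem~\ref{thmThmMDSExplicitLoss} gives, almost surely,
\[
L(\Mxh^{\lambda}\,|\,\mathbf{x})=\sum_{i\in K_{\lambda}}\ell(x_{i})+\sum_{i\notin K_{\lambda}}x_{i}^{2},
\qquad
\ell(x)=\Big(\sqrt{x^{2}+\sigma^{2}}-\sqrt{\tfrac{x^{4}-\beta\sigma^{4}}{x^{2}}}\Big)^{2}+\tfrac{2\beta\sigma^{4}}{x^{2}}+\beta\sigma^{2}.
\]
For $\lambda\le\sigma(1+\sqrt{\beta})$ the estimator retains $\Theta(n)$ bulk directions and the loss diverges, so no such $\lambda$ can be optimal; it therefore suffices to optimize over $\lambda>\sigma(1+\sqrt{\beta})$.

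The loss above decouples over spikes: spike $i$ contributes $\ell(x_{i})$ when kept and $x_{i}^{2}$ when discarded, and a single threshold keeps spike $i$ exactly when $x_{i}>y^{-1}(\lambda)$. Since $y$ is increasing, a threshold is optimal for \emph{every} signal $\mathbf{x}$ simultaneously if and only if its cut-off $y^{-1}(\lambda)$ equals the value $x^{*}$ at which the keep/discard decision flips, i.e.\ $\ell(x^{*})=(x^{*})^{2}$. I would then show $\ell(x)-x^{2}$ has a unique zero on $(\sigma\beta^{1/4},\infty)$. With $u=x^{2}$, the identity $(u+\sigma^{2})(u+\beta\sigma^{2})/u=u+\sigma^{2}+\beta\sigma^{2}+\beta\sigma^{4}/u$ rewrites $\ell(x)=x^{2}$ as
\[
\frac{(u+\sigma^{2})(u+\beta\sigma^{2})}{u}=2\sqrt{\frac{(u+\sigma^{2})(u^{2}-\beta\sigma^{4})}{u}};
\]
both sides are positive on $u>\sigma^{2}\sqrt{\beta}$, so squaring is an equivalence, and substituting $u=\sigma^{2}a$ reduces it to $(a+1)(a+\beta)^{2}=4a(a^{2}-\beta)$, i.e.\ $-3a^{3}+a^{2}(2\beta+1)+a(\beta^{2}+6\beta)+\beta^{2}=0$. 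By Descartes' rule of signs this cubic has exactly one positive root $a$, and its left-hand side at $a=\sqrt{\beta}$ is strictly negative, so $a>\sqrt{\beta}$. Combined with $\ell(x)-x^{2}\to\sigma^{2}(1+\sqrt{\beta})^{2}>0$ as $x\downarrow\sigma\beta^{1/4}$ and $\ell(x)-x^{2}\to-\infty$ as $x\to\infty$, this pins down the sign of $\ell-x^{2}$ and the unique cut-off $x^{*}=\sigma\sqrt{a}>\sigma\beta^{1/4}$.

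Finally I would set $\lambda^{*}=y(x^{*})$ and assemble everything. Using $(x^{*})^{2}=\sigma^{2}a$,
\[
(\lambda^{*})^{2}=\frac{\big((x^{*})^{2}+\sigma^{2}\big)\big((x^{*})^{2}+\beta\sigma^{2}\big)}{(x^{*})^{2}}=\sigma^{2}\,\frac{(a+1)(a+\beta)}{a}=\sigma^{2}\Big(\sqrt{a}+\tfrac{1}{\sqrt{a}}\Big)\Big(\sqrt{a}+\tfrac{\beta}{\sqrt{a}}\Big),
\]
which is the asserted formula. Optimality of $\lambda^{*}$ is precisely the per-spike argument above; for uniqueness, if $\lambda'\ne\lambda^{*}$ were also optimal then $\lambda'>\sigma(1+\sqrt{\beta})$ (otherwise the loss diverges) and $y^{-1}(\lambda')\ne x^{*}$, so taking $d=1$ with a single spike strictly between $\min\{x^{*},y^{-1}(\lambda')\}$ and $\max\{x^{*},y^{-1}(\lambda')\}$ forces $\Mxh^{\lambda'}$ into the wrong keep/discard decision and a strictly larger loss than $\Mxh^{\lambda^{*}}$, a contradiction. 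The inequality $\lambda^{*}>\sigma(1+\sqrt{\beta})$ is then immediate: $a>\sqrt{\beta}$ gives $x^{*}>\sigma\beta^{1/4}$, and $y$ is strictly increasing with $y(\sigma\beta^{1/4})=\sigma(1+\sqrt{\beta})$. The main obstacle is the first step — rigorously justifying that for $\lambda>\sigma(1+\sqrt{\beta})$ the SVHT estimator is asymptotically exactly $\Mxh^{r}$ with the matching spike count, with no surviving cross-terms — but this is precisely the spike-localization and loss-decomposition content already behind Theorem~\ref{thmThmMDSExplicitLoss}; granted that, the remainder is elementary algebra and a one-variable sign analysis.
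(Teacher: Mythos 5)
Your proposal is correct in substance and follows the same overall strategy as the paper: reduce the problem, via the a.s.\ spike localization and the loss decomposition, to a per-spike keep-versus-discard comparison $\ell(x)\lessgtr x^{2}$, translate that into the cubic $-3a^{3}+(2\beta+1)a^{2}+(\beta^{2}+6\beta)a+\beta^{2}=0$ in $a=(x/\sigma)^{2}$, and set $\lambda^{*}=y(\sigma\sqrt{a})$. Where you differ is in the elementary analysis of the cubic, and your route is cleaner: you get the unique positive root from Descartes' rule of signs in one line, whereas the paper computes the discriminant of $f$, locates its two critical points, and invokes an auxiliary lemma to place one root in each of three intervals; and you locate the root relative to $\sqrt{\beta}$ by evaluating $f$ at $a=\sqrt{\beta}$ directly, whereas the paper runs a longer chain of inequalities showing the positive critical point exceeds $\sqrt{\beta}$. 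You are also more explicit than the paper about two points it leaves implicit: why thresholds at or below the bulk edge cannot be optimal, and why the optimal threshold is \emph{unique} (your one-spike separating example).

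One concrete slip: you assert that the cubic is \emph{strictly negative} at $a=\sqrt{\beta}$. It is in fact strictly positive there,
\[
f(\sqrt{\beta})=-3\beta^{3/2}+(2\beta+1)\beta+(\beta^{2}+6\beta)\sqrt{\beta}+\beta^{2}
=3\beta^{3/2}+3\beta^{2}+\beta^{5/2}+\beta>0 ,
\]
consistent with your observation via the squared form that at $a=\sqrt{\beta}$ the right-hand side $4a(a^{2}-\beta)$ vanishes while $(a+1)(a+\beta)^{2}>0$. Fortunately positivity is exactly what you need: since $f(0)=\beta^{2}>0$, $f\to-\infty$, and $f$ has a unique positive root $a^{*}$, $f$ is positive on $[0,a^{*})$ and negative on $(a^{*},\infty)$, so $f(\sqrt{\beta})>0$ forces $a^{*}>\sqrt{\beta}$ and hence $x^{*}>\sigma\beta^{1/4}$ and $\lambda^{*}>\sigma(1+\sqrt{\beta})$. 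With the sign corrected, the argument goes through as written.
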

~\\
It now follows from Lemma \ref{r_star_lambda_star:lem} that we have obtained an optimal choice 
of embedding dimension for classical MDS:
\begin{crl}
The quantity	
    \begin{eqnarray}
	\hat{r}^{*}=\#\{i\in [n] : \Shy(\Sx_i) >\lambda^* \}
	\end{eqnarray}
	is an optimal truncation value (Definition \ref{r_star:def}).
\end{crl}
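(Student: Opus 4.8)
The corollary is meant to be an immediate consequence of two results already established above. Theorem \ref{thmmdsSVHTShrinker} asserts that an optimal hard threshold $\lambda^*$ (in the sense of Definition \ref{lambda_star:def}) exists, and Lemma \ref{r_star_lambda_star:lem} asserts that \emph{whenever} an optimal hard threshold exists, counting the observed singular values lying above it yields an optimal truncation value (Definition \ref{r_star:def}). So the plan is simply to apply Lemma \ref{r_star_lambda_star:lem} with the $\lambda^*$ supplied by Theorem \ref{thmmdsSVHTShrinker}, and then rewrite the resulting count in the form displayed in the statement.

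In more detail, I would first fix $\sigma>0$ and the aspect ratio $\beta$ and invoke Theorem \ref{thmmdsSVHTShrinker} to obtain $\lambda^*$, recording that $\lambda^*>\sigma(1+\sqrt\beta)$. Writing $y_{1,n}\ge\cdots\ge y_{n,n}$ for the singular values of $H\My_n$ on the $n$-th problem and setting $\hat r^*=\#\{i\in[n] : y_{i,n}>\lambda^*\}$, Lemma \ref{r_star_lambda_star:lem} gives directly that $\hat r^*$ is an optimal truncation value. The underlying chain, which I would spell out briefly, is: because the $y_{i,n}$ are sorted, retaining the top $\hat r^*$ of them coincides on every problem with retaining those exceeding $\lambda^*$, so $\hat X^{\hat r^*}(\Delta_n)=\hat X^{\lambda^*}(\Delta_n)$ on comparing \eqref{eq:TSVDForm} with \eqref{eq:SVHTForm}, whence $L(\hat X^{\hat r^*}|\V{\Sx})=L(\hat X^{\lambda^*}|\V{\Sx})$; and for any fixed $r\in\N$, Lemma \ref{lemSVHT_TSVD} exhibits $\hat X^{r}$ as some SVHT estimator, so $L(\hat X^{r}|\V{\Sx})\ge L(\hat X^{\lambda^*}|\V{\Sx})$ by optimality of $\lambda^*$ — invoking in addition the explicit loss formula of Theorem \ref{thmThmMDSExplicitLoss} to dispatch those $r$ exceeding the number of detectable components, for which no constant threshold matches $\hat X^{r}$ but the loss is only larger anyway.

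Finally I would reconcile notation: the singular value perturbation behind Theorem \ref{thmThmMDSExplicitLoss} gives $y_{i,n}\to\Shy(\Sx_i)$ almost surely for the signal indices and $y_{i,n}\to\sigma(1+\sqrt\beta)$ for the remaining indices; since $\lambda^*>\sigma(1+\sqrt\beta)$, for all large $n$ we have $\#\{i : y_{i,n}>\lambda^*\}=\#\{i : \Shy(\Sx_i)>\lambda^*\}$ almost surely, which is exactly the quantity $\hat r^*$ as written in the statement. Because Lemma \ref{r_star_lambda_star:lem} is already available there is no genuine obstacle; the one point that would need care in a from-scratch argument is that converting a fixed-dimension TSVD estimator into an SVHT estimator via Lemma \ref{lemSVHT_TSVD} produces a data-dependent threshold, so the ``fixed $\lambda$'' optimality of Theorem \ref{thmmdsSVHTShrinker} has to be routed through the step-function structure of $\lambda\mapsto L(\hat X^{\lambda}|\V{\Sx})$ — which is precisely what the proof of Lemma \ref{r_star_lambda_star:lem} already handles.
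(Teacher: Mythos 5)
Your proposal is correct and follows exactly the route the paper intends: the corollary is stated as an immediate consequence of Theorem \ref{thmmdsSVHTShrinker} (existence of $\lambda^*$) combined with Lemma \ref{r_star_lambda_star:lem}, and your additional reconciliation of the observed singular values $y_{i,n}$ with the asymptotic quantities $\Shy(\Sx_i)$ via Lemma \ref{lem:MPPropHY} and the bound $\lambda^*>\sigma(1+\sqrt{\beta})$ is precisely the detail the paper leaves implicit. No gap; your write-up is, if anything, more careful than the paper's one-line justification.
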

~\\
So far we have shown that an
optimal truncation value $\hat{r}^*$ and the optimal hard
threshold $\lambda^*$ exist.  In fact, an optimal shrinker also exists:

\begin{thm}
	\label{thmmdsOptShrinkerExplicit}
There exists a unique optimal shrinker $\eta^*$ (Definition \ref{eta_sta:def}) given by
	\begin{eqnarray}
	\label{eq:mdsOptShrinkerProof}
	\eta^* (\Shy) \overset{a.s.}{=}
	\left\{ \begin{array}{ll}
		\sigma \sqrt{(\Sx(\Shy)/\sigma)^2-\beta-\frac{\beta\cdot
		(1-\beta)}{(\Sx(\Shy)/ \sigma  )^2+\beta}} & 
	\,\, \Shy>\sigma\cdot(1+\sqrt{\beta})\\
	0 & \,\,otherwise
	\end{array} \right.
	\end{eqnarray}
	where 
	\begin{eqnarray}
	\label{eq:ProofMdsOptShirnkerProofNoisedToOrig}
	\Sx(\Shy)=\frac{\sigma}{\sqrt{2}} \sqrt{ \left(\frac{\Shy}{\sigma}\right)^2-1-\beta+\sqrt{\left(\left(\frac{\Shy}{\sigma}\right)^2-1-\beta\right)^2-
	4\beta}}
	\,.
	\end{eqnarray}
\end{thm}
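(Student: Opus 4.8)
The plan is to reduce the optimization over the infinite-dimensional class $Con$ of continuous shrinkers to a family of mutually independent one-dimensional optimizations, one per signal component, and then solve each in closed form. First I would invoke the loss-decomposition lemmas (Lemma~\ref{lem:MDSLossLemma} and Lemma~\ref{lem:LossSeperateSigValsLemma}) together with Lemma~\ref{lemma:SandHY_SVD} to write the asymptotic loss of an arbitrary shrinkage estimator $\Mxh^{\eta}$ as a sum of per-coordinate contributions. Concretely, I expect a representation
\[
L(\Mxh^{\eta}\mid \mathbf{x}) \overset{a.s.}{=} \sum_{i=1}^{d} g\big(\Sx_i,\ \eta(\Shy(\Sx_i))\big)\ +\ (\text{bulk penalty}),
\]
where $\Shy(\Sx)$ is the almost-sure limit of the $i$-th data singular value of $H\My$ when $\Sx_i=\Sx$ (the spiked-model limit of \cite{Benaych-Georges2012} already underlying Theorem~\ref{thmThmMDSExplicitLoss}), and where, for a supercritical spike $\Sx>\sigma\beta^{1/4}$, the per-coordinate loss $g(\Sx,w)$ is a quadratic in the shrinkage output $w$ with positive leading coefficient, of the form $g(\Sx,w)=w^2-2b(\Sx)\,w+a(\Sx)$ with $b(\Sx)\ge 0$ on that range. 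The ``bulk penalty'' collects the coordinates $i>d$ and the subcritical spikes, whose data singular values all converge to the edge $\sigma(1+\sqrt{\beta})$ and whose singular vectors become asymptotically orthogonal to the signal; each such coordinate contributes about $\eta(\sigma(1+\sqrt{\beta}))^2\ge 0$, and there are enough of them that this penalty is strictly positive — in fact unbounded — unless $\eta$ vanishes at the edge.

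Second, I would carry out the pointwise minimization. Because the map $\Sx\mapsto\Shy(\Sx)$ is a strictly increasing bijection from $(\sigma\beta^{1/4},\infty)$ onto $(\sigma(1+\sqrt{\beta}),\infty)$, with inverse exactly \eqref{eq:ProofMdsOptShirnkerProofNoisedToOrig}, prescribing the value $\eta(\Shy)$ for $\Shy>\sigma(1+\sqrt{\beta})$ is equivalent to independently prescribing the output at the corresponding signal strength $\Sx(\Shy)$. Minimizing the strictly convex quadratic $g(\Sx,\cdot)$ over $w\ge 0$ gives the unique minimizer $w^*(\Sx)=b(\Sx)$, with the constraint inactive since $b(\Sx)\ge 0$. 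Substituting $\Sx=\Sx(\Shy)$ and simplifying $b(\Sx(\Shy))$ should yield the closed form \eqref{eq:mdsOptShrinkerProof}; in particular one checks $b(\Sx)\to 0$ as $\Sx\downarrow\sigma\beta^{1/4}$, equivalently $\eta^*(\Shy)\to 0$ as $\Shy\downarrow\sigma(1+\sqrt{\beta})$, so the function extends continuously by $0$ on $[0,\sigma(1+\sqrt{\beta})]$, where the bulk penalty independently forces $\eta^*\equiv 0$ — matching the displayed formula. This simultaneously establishes optimality (the chosen $\eta^*$ minimizes every summand and the penalty term) and uniqueness within $Con$ (Definition~\ref{eta_sta:def}): each strictly convex summand has a unique minimizer, the penalty forces the value $0$ at the edge, and a continuous function whose values are thereby pinned down on $(\sigma(1+\sqrt{\beta}),\infty)$ and on $[0,\sigma(1+\sqrt{\beta})]$ is unique.

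The main obstacle will be the first step — pinning down the exact per-coordinate loss $g(\Sx,w)$. This needs not only the limiting data singular values but also the limiting squared cosines between the empirical left singular vectors $\V{u}_i$ of $H\My$ and the signal left singular vectors, plus careful bookkeeping of the noise energy leaking into the $d$-dimensional signal subspace (the source of the $\beta\sigma^2$ and $2\beta\sigma^4/\Sx_i^2$ terms visible in Theorem~\ref{thmThmMDSExplicitLoss}). Once $g(\Sx,w)$ is in hand and shown to be quadratic in $w$, the remaining scalar optimization and the algebra leading to \eqref{eq:mdsOptShrinkerProof} are routine. A secondary, minor point is to justify the decomposition of the $n\to\infty$ limit into per-coordinate terms carefully enough that the univariate optimum $\eta^*$ is genuinely realizable and that no estimator lying outside the displayed form can do better.
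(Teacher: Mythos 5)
Your proposal follows essentially the same route as the paper: decompose the asymptotic loss into per-coordinate terms via Lemma \ref{lem:LossSeperateSigValsLemma}, observe that each supercritical term is a convex quadratic in the shrinker output, minimize pointwise to get $\eta^*(\Shy(\Sx))=\Sx\cdot c(\Sx)$, invert $\Sx\mapsto\Shy(\Sx)$ to obtain \eqref{eq:ProofMdsOptShirnkerProofNoisedToOrig}, and verify continuity at the bulk edge. The one place you go slightly beyond the paper is your ``unbounded bulk penalty'' argument for why $\eta$ must vanish at and below the edge -- the paper's lemma simply restricts to shrinkers that already vanish there -- and your justification is sound in spirit, though the noise singular values spread over the whole quarter-circle support rather than all converging to the edge.
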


\begin{figure}[H]
	\includegraphics[width=1\linewidth]{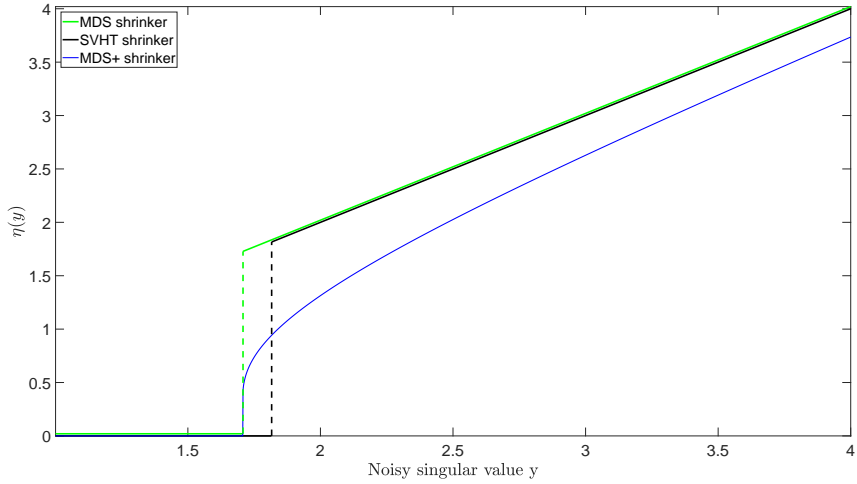}
	\caption{The shrinkers function over the singular value of a rank-1
	    original matrix $\Mx$.
	    Here, $\beta=0.5$ and $\sigma=1$. (Color online)
	\label{fig:shrinkers_funcs}
	}	

\end{figure}
~\\
The estimator $\hat{X}^{\eta^*}$, with the optimal shrinker $\eta^*$ is
a simple alternative to classical MDS, which we call \MDSp. (The algorithm is written explicitly in table \ref{MDS:alg}, located in subsection \ref{SS:1OptVariant})

\noindent We next consider the asymptotic loss obtained by the optimal shrinker $\eta^*$.
\begin{thm}
	\label{thmMDSPLoss}
	The asymptotic loss of the optimal shrinker $\eta^*$ is:
	\begin{eqnarray}
	\label{eq:ProofOptShrinkLoss}
	L(\Mxh^{\eta^*} | \V{\Sx}) \overset{a.s.}{=}
	\beta \cdot \sigma^2 \cdot \left(\sum_{i=1}^t \frac{1-\beta}{(\Sx_i/\sigma)^2+\beta}
	+1 \right)+\sum_{i=t+1}^d \Sx_i^2
	\end{eqnarray}
	where $t=\#\{i\in [d] {\ } s.t. {\ } x_i>\sigma\cdot
	\beta^{\frac{1}{4}}\}$.
\end{thm}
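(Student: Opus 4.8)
The plan is to collect the per-coordinate loss decomposition already developed for Theorem~\ref{thmThmMDSExplicitLoss} and Theorem~\ref{thmmdsOptShrinkerExplicit}, evaluate it at $\eta=\eta^*$, and simplify. By Lemma~\ref{lem:MDSLossLemma} and Lemma~\ref{lem:LossSeperateSigValsLemma}, the asymptotic loss of any shrinkage estimator $\Mxh^{\eta}$ splits, almost surely, into independent contributions. Writing $\Shy(\Sx_i)$ for the a.s.\ limit of the singular value of $H\My_n$ attached to the $i$-th signal direction and $c(\Sx_i)$ for the a.s.\ limit of the overlap between the corresponding empirical and population left singular vectors, each super-critical direction ($\Sx_i>\sigma\beta^{1/4}$, i.e.\ $i\le t$) contributes $\Sx_i^2+\eta(\Shy(\Sx_i))^2-2\,\Sx_i\,c(\Sx_i)\,\eta(\Shy(\Sx_i))$; each sub-critical direction ($t<i\le d$) contributes its full energy $\Sx_i^2$; and every remaining, noise-related singular value, which lies in the Marchenko--Pastur bulk $[\sigma(1-\sqrt\beta),\sigma(1+\sqrt\beta)]$, contributes $\eta(\cdot)^2$ whenever it is retained.

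First I would use the fact that $\eta^*$ vanishes on $[0,\sigma(1+\sqrt\beta)]$: this is immediate from \eqref{eq:mdsOptShrinkerProof}, and at the endpoint one checks that $\Sx(\sigma(1+\sqrt\beta))=\sigma\beta^{1/4}$ makes the radicand in \eqref{eq:mdsOptShrinkerProof} vanish, so $\eta^*$ is continuous there. Hence, for $\eta=\eta^*$, every bulk term and every sub-critical empirical singular value (which converges to $\sigma(1+\sqrt\beta)$) is mapped to a value tending to $0$ and contributes $o(1)$; in particular the embedding dimension chosen in step~3 of \MDSp~ equals $t$ almost surely in the limit. This leaves
\[
L(\Mxh^{\eta^*}\,|\,\V{\Sx})\;\overset{a.s.}{=}\;\sum_{i=1}^{t}\Big(\Sx_i^2+\eta^*(\Shy(\Sx_i))^2-2\,\Sx_i\,c(\Sx_i)\,\eta^*(\Shy(\Sx_i))\Big)+\sum_{i=t+1}^{d}\Sx_i^2.
\]

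Next, for a super-critical direction the scalar $\eta^*(\Shy(\Sx_i))$ is, by the construction carried out in Theorem~\ref{thmmdsOptShrinkerExplicit}, precisely the minimiser over $e\ge 0$ of the quadratic $e\mapsto\Sx_i^2+e^2-2\,\Sx_i\,c(\Sx_i)\,e$, so $\eta^*(\Shy(\Sx_i))=\Sx_i\,c(\Sx_i)$ and the $i$-th summand equals the minimum value $\Sx_i^2\big(1-c(\Sx_i)^2\big)$. It then remains to insert the closed form of $c(\Sx_i)^2$ coming from the spiked-model limit used above and simplify; clearing $\sigma^2$ one gets
\[
\Sx_i^2\big(1-c(\Sx_i)^2\big)\;=\;\beta\sigma^2\,\frac{\Sx_i^2+\sigma^2}{\Sx_i^2+\beta\sigma^2}\;=\;\beta\sigma^2\left(1+\frac{1-\beta}{(\Sx_i/\sigma)^2+\beta}\right),
\]
and summing over $i=1,\dots,t$ and adding the sub-critical tail gives \eqref{eq:ProofOptShrinkLoss}. (That the cutoff $\Sx_i>\sigma\beta^{1/4}$ defining $t$ matches the threshold $\Shy_i>\sigma(1+\sqrt\beta)$ used by \MDSp~ is just the identity $\Shy(\sigma\beta^{1/4})=\sigma(1+\sqrt\beta)$, already used in Theorem~\ref{thmmdsSVHTShrinker}.)

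The main obstacle is not any single computation but making the decoupling in the first display rigorous: one must invoke the joint almost-sure convergence of the leading singular values of $H\My_n$ and of the pairwise overlaps of their singular vectors --- the content of Lemma~\ref{lem:MDSLossLemma} and Lemma~\ref{lem:LossSeperateSigValsLemma} --- to argue that distinct signal directions, and signal-versus-noise directions, leave no surviving cross terms, and that $\eta^*$ killing the whole bulk really does eliminate the $\Theta(n)$ noise contributions without residue. Granting that, only the short algebraic simplification above remains; the borderline case $\Sx_i=\sigma\beta^{1/4}$ needs no separate treatment, since there the $i$-th direction behaves exactly like a sub-critical one and is already accounted for in $\sum_{i=t+1}^{d}\Sx_i^2$.
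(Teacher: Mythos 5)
Your proof is correct and follows essentially the same route as the paper: the per-direction decomposition of Lemma \ref{lem:LossSeperateSigValsLemma}, the identity $\eta^*(\Shy(\Sx))=\Sx\, c(\Sx)$ established in the proof of Theorem \ref{thmmdsOptShrinkerExplicit}, and the same algebraic reduction of $\Sx_i^2\bigl(1-c(\Sx_i)^2\bigr)$ to $\beta\sigma^2\bigl(1+\tfrac{1-\beta}{(\Sx_i/\sigma)^2+\beta}\bigr)$. Note that, exactly as in the paper's own computation, your derivation produces one additive term $\beta\sigma^2$ per supercritical spike (i.e.\ a total of $\beta\sigma^2 t$), so the ``$+1$'' in \eqref{eq:ProofOptShrinkLoss} must be read inside the summation over $i=1,\dots,t$.
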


Our next main result quantifies the regret for using 
the classical
MDS (even with optimally tuned with the optimal truncation value $\hat{r}^*$) 
instead of the the proposed algorithm \MDSp.

\begin{thm}
	\label{thmOptBetterTSVD}
	Let $r\in\mathbb{N}$ and consider the classical MDS with embedding
	dimension $r$.
	The asymptotic loss of \MDSp~ is a.s. better then the asymptotic loss of
	classical MDS, and in fact the quantity
	\begin{eqnarray*}
	L(\Mxh^{r} | \Mx)-L(\Mxh^{\eta^*} | \Mx) & \overset{a.s.}{=} &
	\Bigg( \sum_{i=1}^{t} 
	 1_{[i\leq r]} 
	 \bigg[
	\bigg(\sqrt{x_i^2+\sigma^2}-\sqrt{\frac{x_i^4-\beta\cdot \sigma^4}{x_i^2}}\bigg)^2 \\
	&+&\beta\cdot \sigma^2\cdot 
	\frac{(x_i/\sigma)^2\cdot (1+\beta)+2\beta}{(x_i/\sigma)^4+\beta \cdot(x_i/\sigma)^2}
	\bigg]  \\ 
 	 &+&\mathbf{1}_{[i> r]} \cdot \sigma^2\cdot 	\frac{(x_i/\sigma)^4-\beta}
	{(x_i/\sigma)^2+\beta} 
	\Bigg)\\
	 &+&\sum_{i=t+1}^{r} \sigma^2\cdot (1+\sqrt{\beta})^2
	\\
	&\geq& 0 	\,,
	\end{eqnarray*}
	is always non-negative. Here,  $t=\#\{i\in [d] {\ } s.t. {\ } x_i>\sigma\cdot
	\beta^{\frac{1}{4}}\}$.
\end{thm}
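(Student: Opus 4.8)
The plan is to derive the regret formula by simply subtracting the two closed-form expressions already in hand: the asymptotic loss of classical MDS from Theorem \ref{thmThmMDSExplicitLoss} and the asymptotic loss of \MDSp~ from Theorem \ref{thmMDSPLoss}. Both losses are given term-by-term as sums over the signal singular values $x_1 > \cdots > x_d$, split at the supercritical index $t = \#\{i\in[d]\ :\ x_i > \sigma\beta^{1/4}\}$, so the difference should again be a sum over $i\in[d]$ whose $i$-th term depends only on whether $i\le t$ and whether $i\le r$. First I would organize the bookkeeping into four index regimes: (a) $i\le\min(t,r)$, where classical MDS keeps a supercritical component and so does \MDSp; (b) $\min(t,r)<i\le t$ with $i>r$, where \MDSp~ keeps the supercritical component but classical MDS truncates it (this produces the $\mathbf{1}_{[i>r]}$ branch); (c) $t<i\le r$, where classical MDS retains a subcritical (pure-noise) axis — contributing $\sigma^2(1+\sqrt\beta)^2$ each — while \MDSp's hard-thresholding part of $\eta^*$ has already zeroed it out; (d) $i>\max(t,r)$, where both algorithms drop the component and the $x_i^2$ tails cancel identically.

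The core of the argument is regime (a). There, Theorem \ref{thmThmMDSExplicitLoss} contributes $\big(\sqrt{x_i^2+\sigma^2}-\sqrt{(x_i^4-\beta\sigma^4)/x_i^2}\big)^2 + 2\beta\sigma^4/x_i^2 + \beta\sigma^2$, while Theorem \ref{thmMDSPLoss} contributes $\beta\sigma^2\big((1-\beta)/((x_i/\sigma)^2+\beta)\big) + \beta\sigma^2$ (the lone additive $\beta\sigma^2$ from the \MDSp~ loss gets distributed across the $t$ supercritical terms, matching the presentation in the statement). The $\beta\sigma^2$ terms cancel, so what remains is $\big(\sqrt{x_i^2+\sigma^2}-\sqrt{(x_i^4-\beta\sigma^4)/x_i^2}\big)^2 + 2\beta\sigma^4/x_i^2 - \beta\sigma^2(1-\beta)/((x_i/\sigma)^2+\beta)$, and I must show this equals the bracketed expression $\big(\sqrt{x_i^2+\sigma^2}-\sqrt{(x_i^4-\beta\sigma^4)/x_i^2}\big)^2 + \beta\sigma^2\cdot\frac{(x_i/\sigma)^2(1+\beta)+2\beta}{(x_i/\sigma)^4+\beta(x_i/\sigma)^2}$. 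After pulling out $\sqrt{\cdot}$-free, this reduces to the purely rational identity $2\beta\sigma^4/x_i^2 - \beta\sigma^2(1-\beta)/((x_i/\sigma)^2+\beta) = \beta\sigma^2\frac{(x_i/\sigma)^2(1+\beta)+2\beta}{(x_i/\sigma)^4+\beta(x_i/\sigma)^2}$; writing $u = (x_i/\sigma)^2$, clearing denominators, and expanding is a routine verification that I would carry out but not reproduce here in full. For regime (b), subtracting the $x_i^2$ term of the truncated MDS loss against the \MDSp~ supercritical term $\beta\sigma^2\big(1 + (1-\beta)/(u+\beta)\big)$ and simplifying should give exactly $\sigma^2\frac{u^2-\beta}{u+\beta}$, which is the $\mathbf{1}_{[i>r]}$ branch — again a short rational manipulation, with the algebraic identity $\sqrt{(x_i^4-\beta\sigma^4)/x_i^2} = \sigma\sqrt{u - \beta/u}\cdot\sigma$ handled via the supercritical displacement formulas implicit in the proof of Theorem \ref{thmThmMDSExplicitLoss}. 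Regime (c) is immediate: classical MDS pays $\sigma^2(1+\sqrt\beta)^2$ per spurious axis and \MDSp~ pays $0$, giving the final $\sum_{i=t+1}^{r}\sigma^2(1+\sqrt\beta)^2$ term.

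For nonnegativity, I would argue regime by regime. Regime (c) is manifestly $\ge 0$. In regime (b), $i>r$ forces $i\le t$ hence $u=(x_i/\sigma)^2 > \sqrt\beta$, so $u^2 > \beta$ and $\sigma^2(u^2-\beta)/(u+\beta) > 0$. In regime (a), the first summand is a square hence $\ge 0$, and the second summand is $\beta\sigma^2$ times a ratio of two positive quantities (numerator $u(1+\beta)+2\beta>0$, denominator $u^2+\beta u>0$), so it is positive as well. Summing the four regimes gives the claimed inequality $L(\Mxh^r|\Mx) - L(\Mxh^{\eta^*}|\Mx)\ge 0$. The main obstacle is purely clerical: matching the exact grouping of terms in the theorem statement — in particular tracking how the single $+\beta\sigma^2\cdot 1$ in the \MDSp~ loss of Theorem \ref{thmMDSPLoss} is split and absorbed, and correctly identifying the four index regimes when $r$ can be smaller or larger than $t$ — so the proof reads as a careful but elementary case split followed by two short rational-function identities, with no new analytic input beyond the two loss theorems already established.
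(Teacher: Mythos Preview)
Your proposal is correct and is essentially the paper's own argument: the paper packages exactly your regime-by-regime subtraction into Lemma~\ref{lem:OptLossBetterTSVDLoss} (three cases matching your (a), (b), (c), with (d) trivial), invoking the per-term loss decompositions of Lemmas~\ref{lem:MDSLossLemma} and~\ref{lem:LossSeperateSigValsLemma} rather than the already-summed Theorems~\ref{thmThmMDSExplicitLoss} and~\ref{thmMDSPLoss}, but the rational-function identities you sketch are precisely those carried out there. Your reading of the ``$+1$'' in Theorem~\ref{thmMDSPLoss} as living inside the sum (so each supercritical term carries its own $\beta\sigma^2$) is the one consistent with the paper's derivation, and your nonnegativity argument by inspection of each regime is exactly what the paper leaves implicit.
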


Figure \ref{regret} shows the regret over the signal singular value $x$ for
specific values of $r$,$\beta$ and $\sigma$.

\begin{figure}[H]
	\includegraphics[width=1\linewidth]{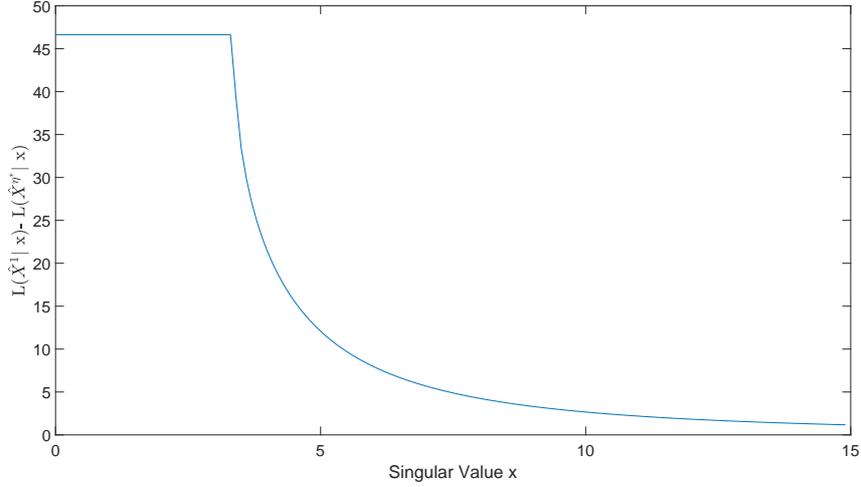}
	\caption{Regret, in terms of asymptotic loss, for using 
	    MDS (with embedding dimension set at $1$) instead of $\MDSp$. 
			Here, $r=1$, $\sigma=4$  and $\beta =0.5 $
			\label{regret}
}

\end{figure}

~\\
{\em Estimating the noise level $\sigma$.}
When the noise level $\sigma$ is unknown, it must be estimated in order to use
\MDSp.
A number of approaches were developed over the years for estimating the noise level
\cite{Shabalin2013,passemier2013variance,kritchman2009non}.
Here we follow the proposal of \cite{Donoho2013_4_3}, which showed:

\begin{thm}
	\label{thmSigmaEstimation}
	Consider
	\begin{eqnarray}
		\label{eq:SigmaEstimation}
		\hat{\sigma}(S)=\sqrt{\frac{s_{med}}{\mu_{\beta}} }
	    \end{eqnarray}
	where $s_{1}\geq \ldots \geq s_{min(n,p)}\geq 0 $ are the  eigenvalues of S, and $s_{med}$ is  their median. Denote the median of the Marcenko Pastur (MP) distribution \cite{Marcenko1967} for $\beta$ by $\mu_{\beta}$.
	Then 
	$ \hat{\sigma}^2(S_n)\overset{a.s.}{\longrightarrow} \sigma^2$ as
	$n\to\infty$.
\end{thm}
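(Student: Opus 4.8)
The plan is to reduce the statement to a known result about the empirical spectral distribution (ESD) of the noise-only part of the model, and then argue that the low-rank signal perturbation does not affect the bulk — in particular, the median. First I would recall that $S = (HY)(HY)^\T$ by Theorem \ref{thmMDSRealLoss}, so the eigenvalues $s_1 \geq \cdots \geq s_{\min(n,p)}$ of $S$ are exactly the squared singular values $y_i^2$ of $HY$. Writing $HY = H([X,0]R + Z) = HX' + HZ$ with $HX'$ of rank at most $d$, the matrix $HY$ is a rank-$\leq d$ perturbation of $HZ$. Since $H$ is an orthogonal projection of corank $1$, $HZ$ has the same limiting ESD (after scaling by $1/\sigma^2$) as a $p^{-1}$-variance i.i.d. matrix times its transpose, namely the Marčenko–Pastur law $MP_\beta$ with $\beta = \lim (n-1)/p_n$; the ``$n-1$'' rather than ``$n$'' is precisely the effect of centering by $H$, and the orthogonal invariance plus finite fourth moment assumptions in the asymptotic model are exactly what is needed to invoke the Marčenko–Pastur theorem.

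Next I would invoke the classical rank-perturbation stability of the ESD: adding a matrix of rank $O(1)$ (here, rank $\leq d$, a fixed constant) to $HZ$ changes each eigenvalue counting function by at most $O(d)$ positions, hence does not change the limiting ESD of $S/\sigma^2$, which remains $MP_\beta$. Consequently the empirical quantiles of $\{s_i/\sigma^2\}$ converge almost surely to the corresponding quantiles of $MP_\beta$. Applying this at the median gives $s_{med}/\sigma^2 \overset{a.s.}{\to} \mu_\beta$, where $\mu_\beta$ is the median of $MP_\beta$; here one uses that $MP_\beta$ has a density that is positive and continuous in a neighbourhood of its median (true for all $\beta \in (0,1]$), so the quantile map is continuous there and a.s. weak convergence of the ESD upgrades to a.s. convergence of the sample median. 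Rearranging, $\hat\sigma^2(S_n) = s_{med}/\mu_\beta \overset{a.s.}{\to} \sigma^2$, which is the claim.

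The main obstacle, and the step that deserves the most care, is the passage from convergence of the ESD to convergence of the sample median: weak convergence of distributions does not in general imply convergence of medians, and one must rule out a cluster of eigenvalues sitting exactly at the median that could be dislodged by the rank-$d$ perturbation or by finite-sample fluctuations. This is handled by the positivity of the $MP_\beta$ density at $\mu_\beta$ (so the CDF is strictly increasing there) together with the rank-perturbation bound, which guarantees that the at most $d$ displaced eigenvalues are a vanishing fraction of the $\min(n,p) \to \infty$ total. A secondary point to verify is that when $\beta < 1$ the matrix $S$ has a nontrivial kernel (about $p - (n-1)$ zero eigenvalues among the $\min(n,p) = n$ listed), but since $\beta \leq 1$ these zeros form less than half the spectrum, so the median still lands in the absolutely continuous bulk of $MP_\beta$ on $[(1-\sqrt\beta)^2,(1+\sqrt\beta)^2]$ — the edge case $\beta = 1$, where $MP_1$ has a density blowing up like $x^{-1/2}$ at $0$ but is still integrable and positive at its median, should be noted explicitly. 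All of this is exactly the argument of \cite{Donoho2013_4_3}, so the proof would largely consist of checking that the present model (centering by $H$, orthogonally invariant noise with finite fourth moment, asymptotic aspect ratio $(n-1)/p_n \to \beta$) satisfies that paper's hypotheses, and citing it.
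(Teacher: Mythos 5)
Your proposal is correct and follows essentially the same route as the paper's proof: reduce to the empirical spectral distribution of $S/\sigma^2$, identify its Marcenko--Pastur limit (with the rank-$d$ signal and the corank-$1$ centering not affecting the bulk), and invoke continuity of the median functional at $F_{MP}$ --- a step the paper merely asserts and you justify via positivity of the density at the median. One minor slip: for $\beta<1$ the $n\times n$ matrix $S$ has only one zero eigenvalue (coming from the centering $H$), not $p-(n-1)$ of them (that count belongs to the transposed $p\times p$ Gram matrix), but this only makes your check that the median lands in the absolutely continuous bulk easier.
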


The MP median is not available analytically yet is very simple to calculate
numerically (see the Code Supplement \cite{CODE}). It is easy to verify that 
by plugging in $\hat{\sigma}$ for $\sigma$, 
the main results above hold.

Figure \ref{losses:fig} compares the asymptotic loss of classical MDS, optimally
tuned SVHT, and \MDSp.

\begin{figure}[H]
	\includegraphics[width=1\linewidth]{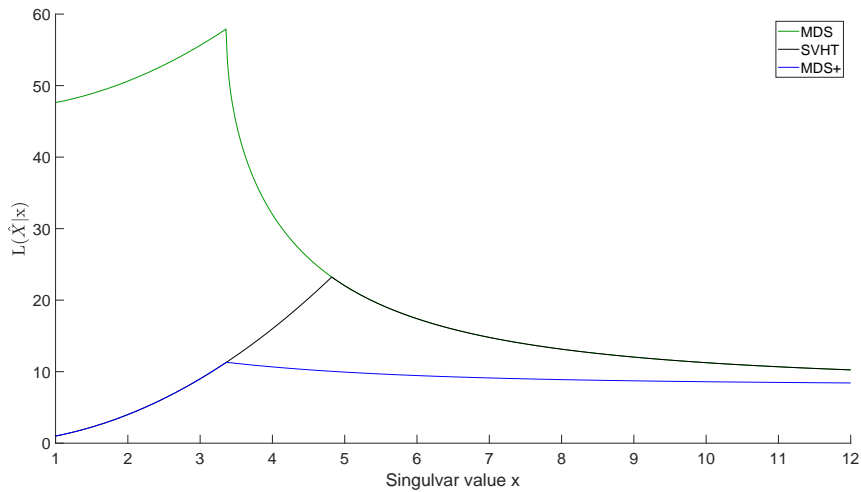}
	\caption{The asymptotic loss of classical MDS (with embedding dimension
	    set to $1$), optimally tuned SVHT and \MDSp.
	    Here $\sigma=4$, $p=500$ and $n=251$. 
	(Color online) \label{losses:fig}}
\end{figure}

{\em The case $\beta>1$.} So far, we only 
    considered the case $\beta \in(0,1]$. The  Appendix contains 
similar results  for the case
$\beta\geq1 $. 

\section{Discussion}
\label{S:5}

\noindent {\em Implications on Manifold Learning.} 
Manifold learning and metric learning methods seek to
	reconstruct a global, low-dimensional geometric structure of a dataset from
	pairwise dissimilarity or affinity measurements \cite{tenenbaum2000Isomap,	roweis2000LLE, coifman2006diffusion, vincent2008extracting, bishop1998developments,
	bellet2013survey,bengio2013representation}. 
	As such, they are specifically designed to be applied to data in
	high-dimensional Euclidean spaces. However, the manifold learning 
	literature contains very little reference to the sensitivity of these
	methods to measurement noise, and particularly to ambient noise
	contaminating the data observed in high-dimensional space.

	The results of the present paper show conclusively that the effect of
	measurement noise cannot be ignored. Indeed, we have shown that the
	behavior of MDS, arguably the earliest and one of the 
	most widely-used Euclidean embedding techniques, and a linear precursor to
	manifold 
	learning techniques, depends crucially on the measurement noise level
	$\sigma$.  For instance, Theorem \ref{thmThmMDSExplicitLoss} shows that
	classical MDS breaks down whenever any one of the singular values $x$ of
	the signal falls below the critical point $\sigma\cdot \beta^{1/4}$.

	These phenomena necessarily hold in any manifold learning technique
	which relies on spectral decomposition of similarity distances, which is
	to say, in basically any manifold learning technique. In this regard our
	results call for a thorough investigation of the noise sensitivity of many well
	known 
	manifold learning methods in the presence of noisy, high dimensional
	measurements. We expect that the phenomena formally quantified in this
	paper, including breakdown of the method in a critical noise level, are
	all present in basically any manifold learning method.
\\~\\
	\noindent{\em Formal quantification of embedding quality.}
	Dimensionality reduction and manifold learning techniques are
	non-supervised. As such, the literature has traditionally ignored their
	formal operating characteristics and focused on asymptotic theorems showing
	that certain manifold quantities are recovered, as well as 
	examples where they appear to
	perform well, relying on visualizations to demonstrate how a method of
	interest can be expected to perform. The present paper takes a
	decision-theoretical approach, and evaluates the performance of a
	non-supervised learning method (in this case, MDS) using a loss
	function. We place our choice of loss function on solid footing with a
	combination of three results:
	\begin{enumerate}
	    \item In the absence of noise, the classical
	MDS algorithm is recovered by minimizing the proposed loss 
	(Lemma \ref{lemNoiselessConfigMDSGetOptimalSimilarityMat}).
    \item The proposed loss function satisfies invariance properties which any
	reasonable loss function should satisfy
   (Lemma  \ref{lemSimlarityDistanceFromAxioms}).
    \item The loss function is based on a pseudo-metric (Lemma
	\ref{pseudo_metric:lem}).
	\end{enumerate}

	By introducing a loss function, analysis and comparison of different
	methods become possible and even simple. It also gives rise to the
	notion of an optimal method.

~\\
\begin{figure}[H]
	\includegraphics[width=1\linewidth]{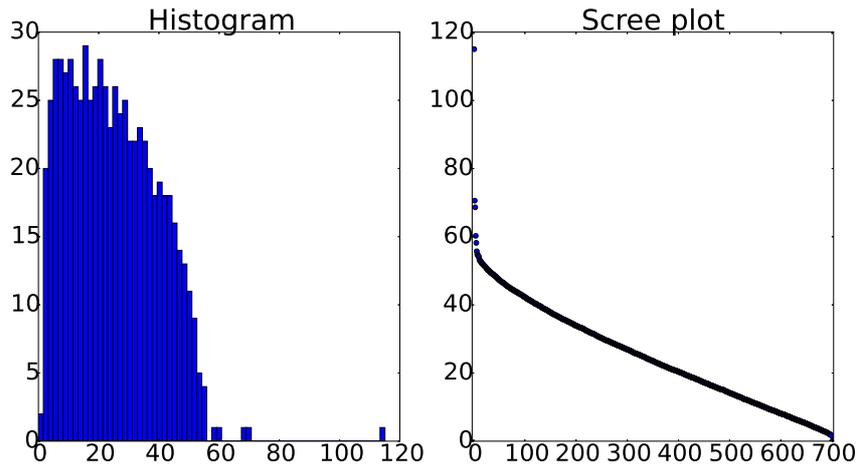}
	\caption{Histogram of singular values (left) and Scree plot of singular
	    values (right) of MNIST data, see Figure \ref{fig:mnist} above.
	    Here the noise level we used is $\sigma=30$, implying noise standard
	    deviation $\sigma/\sqrt{p}=30/\sqrt{784}\approx 1.07$.
		\label{hist_scree}}
\end{figure}

\noindent {\em Bulk edge, Scree plot and the optimal threshold $\lambda^*$.}
It is interesting to compare quantitatively 
the SVHT method with optimal threshold $\lambda^*$  from
Theorem \ref{thmmdsSVHTShrinker} with the classical Scree plot method. While the Scree
plot method itself is not a formally specified algorithm (it is actually more of a
subjective a visual ceremony) we argue that it is roughly equivalent to 
hard thresholding of the singular values (as in \eqref{Xhat_MDS:eq} and \eqref{eq:TSVDForm}), with a specific
choice of threshold. Plotting a histogram of the singular values (Figure
\ref{hist_scree}, left panel) 
instead of their Scree plot (right panel) one observes that 
the so-called ``bulk'' of singular values in lower part of the histogram is the
density of noise-only singular values, known as the
Quarter Circle distribution \cite{bai2010spectral}, with compactly
supported density 
\begin{eqnarray} \label{eq:QC}
f_{QC}(x)= \left\{\begin{array}{cc}
\frac{\sqrt{4\beta\sigma^4 -(x^2-\sigma^2-\beta\sigma^2)^2}}{\pi \sigma^2 \beta \cdot x} & \text{if }x\in [\lambda_-, \lambda_+]\\
0 & otherwise
\end{array}
\right.
\end{eqnarray}
where $\lambda_{\pm}=\sigma\cdot (1+\sqrt{\beta})$ and $\beta\in (0,1]$. The upper
edge of the support, known as the ``bulk edge'' is located at $\lambda_+$ 
Comparing histograms of various noise distributions with their respective Scree
plots, one easily observes that the famous ``knee'' or ``elbow'' in Cattell's Scree
plot \cite{Cattell1966} is simply the location of
the bulk edge. It follows that the Scree plot ceremony is roughly equivalent to
an attempt to visually locate the bulk edge of the underlying noise-only
distribution. 

It is interesting to observe that the optimal threshold $\lambda^*$ is always
larger than the bulk edge. In other words, even when the singular values are
visually recognizable above the elbow in the Scree plot, and are detectable as
signal  singular values (distinct from the noise singular values) 
it is still worthwhile,
from the perspective of the asymptotic loss, to exclude them from the
reconstruction, as long as they are ``too close'' to the bulk edge. For a  more
thorough discussion of this phenomena,  see \cite{Donoho2013_4_3}.

Figure  \ref{simulation} compares the asymptotic loss of the classical MDS (with
the Scree plot ceremony, namely bulk-edge hard thresholding), MDS with the
optimal threshold $\lambda^*$ and MDS+ -- equations \eqref{Xhat_MDS:eq}, \eqref{eq:SVHTOptimalShrinker} and
\eqref{eq:mdsOptShrinkerProof} respectively,
over the signal singular value  $x$. 

\begin{figure}[h]
\includegraphics[width=1\linewidth]{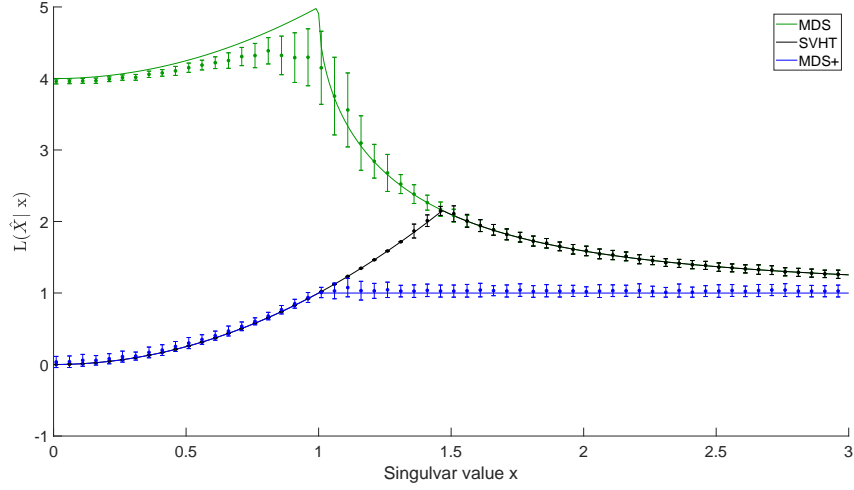}
	\caption{
		$X\in M_{1000\times1000}$ , $d=1$, $r=1$, $\sigma=1$. The plot is composed of 100 tests for each singular value. The points indicates the mean of the loss over the tests, while the error bars indicate the standard deviation of the tests. The solid line indicates the theoretic loss  (Color online)}
	\label{simulation}
	
\end{figure}

~\\
{\em Accuracy in finite-$n$.}
The threshold and shrinker derived in this paper are based on an asymptotic
loss, and one naturally wonders whether they remain approximately optimal in
finite values of $n$. While this question is beyond our present scope, we note
that the convergence to the limit of the loss and its underlying components is
rapid and indeed the asymptotically optimal threshold and shrinker can be used
in practice. Figure \ref{simulation} shows the predicted asymptotic loss (solid lines)
and the empirically observed mean and standard deviation from a Monte-Carlo
simulation. Precise evaluation of the finite-$n$ effects will be evaluated
elsewhere.

\section{Proofs} \label{S:6} 
\subsection{Notation}
Following subsection \ref{AsymptoticModel}, we assume the same $d$ singular values over all $\Mx_n$. Denote its singular values by $\V{x}=(x_1,\ldots,x_d)$, as assumed they 
are non-degenerate, so that $\Sx_1>\ldots> \Sx_d> 0$, 
and they are mean centered,
so that $H\cdot \Mx_n=\Mx_n$.
Denote its left and right singular vectors as
$\{\V{v_{n,1}},\ldots,\V{v_{n,n}}\}\subset \R^n$ and
$\{\V{\tilde{v}_{n,1}},\ldots,\V{\tilde{v}_{n,d}}\} \subset \R^d$, 
respectively. Denote the corresponding matrices of singular vectors by
$V_n=[\V{v_{n,1}},\ldots,\V{v_{n,n}}]\in O(n)$ 
and $\tilde{V}_n=[\V{\tilde{v}_{n,1}},\ldots,\V{\tilde{v}_{n,d}}]\in O(d)$. 
Similarly,
denote the singular values of $H\cdot Y_n$ by 
$\Shy_{n,1}\geq \ldots \geq \Shy_{n,n}\geq 0$ and its left and right singular vectors by $\{ \V{u_{n,1}},\ldots,\V{u_{n,n}} \}\in \R^d$ and $\{ \V{\tilde{u}_{n,1}},\ldots,\V{\tilde{u}_{n,n}} \}\in \R^{p_n}$, respectively.
Let the corresponding singular vector matrices be $U_n=[\V{u_{n,1}},\ldots,\V{v_{n,n}}]\in O(n)$ and $\tilde{U}_n=[\V{\tilde{u}_{n,1}},\ldots,\V{\tilde{u}_{n,p_n}}]\in O(p_n)$.

Some properties of the centering matrix H are important in what follows.
Denote $H= U_H \cdot D_H  \cdot U_H ^\T$ as its eigenvalue Decomposition, 
where $U_H\in O(n)$ is the eigenvector matrix and $D_H=diag(1,\ldots,1,0)$ 
is the eigenvalue matrix. 
Observe that $(U_H)_{*,n}=\frac{1}{\sqrt{n}}\V{1_n}$ and that 
$H\cdot H=H$ as the centering matrix is a projection. While $H$ depends on $n$, we suppress it and leave it to be inferred from context.

\subsection{Limiting location of the singular values and vectors}

\begin{lem}
	\label{lem:MPPropHY} 
	The asymptotic distribution of $H\cdot \My$ has the following properties for any finite $i$ and $j$:
	\begin{enumerate}
		\item $\underset{n\rightarrow\infty}{lim}\Shy_{n,i} \overset {a.s.} {=} \left\{ \begin{array}{ll}
		\Shy(\Sx_i) & i\in[t]\\
		\sigma\cdot (1+\sqrt{\beta}) & otherwise
		\end{array} \right.$ \\
		\item $\underset{n\rightarrow\infty}{lim}| \langle\V{u_{n,i}}, \V{v_{n,j}}\rangle |\overset{a.s.}{=} \left\{ \begin{array} {ll}
		c(x) & i=j\in [t] \\
		0 & otherwise
		\end{array}
		\right. $
	\end{enumerate}
	when $t \equiv\#\{i\in[d] {\ }| \Sx_i/\sigma>\beta^{\frac{1}{4}}\}$ and 
	\begin{eqnarray*}
		c(x) &=& \sqrt{\frac{(x/\sigma)^4-\beta}{(x/\sigma)^4+\beta (x/\sigma)^2}} \\
	y(x) &=& \sqrt{(x/\sigma+\frac{1}{x/\sigma})\cdot(x/\sigma+\frac{\beta}{x/\sigma})}
	\end{eqnarray*}
\end{lem}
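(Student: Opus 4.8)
The plan is to derive both statements from classical BBP-type results about the spiked model, applied to the matrix $H \cdot \My$. The starting point is the observation that $H\My = [H\Mx, 0] R_n + H Z_n$, where $H Z_n$ is (after conditioning on the rank-one degeneracy introduced by $H$) essentially an $(n{-}1)\times p_n$ white-noise matrix with entry variance $\sigma^2/p_n$, and $H\Mx$ is a fixed-rank-$d$ perturbation with nonzero singular values $\Sx_1 > \dots > \Sx_d$. The aspect ratio of the ``effective'' noise block is $(n-1)/p_n \to \beta$, which is exactly why $\beta$ is defined as $(n-1)/p$ rather than $n/p$. The key technical point to check first is that projecting by $H$ does not disturb the asymptotics: since $H$ is a rank-$(n{-}1)$ orthogonal projection and, by the orthogonal invariance assumed in the asymptotic model, $H Z_n$ has the same distribution as a white matrix of one fewer row, the standard results apply verbatim with $n$ replaced by $n-1$.

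Next I would invoke the singular-value phase-transition theorem (Benaych-Georges--Nadakuditi, building on Baik--Ben Arous--P\'ech\'e) for the additive spiked model. In the normalization used here, a signal singular value $\Sx_i$ is ``above threshold'' precisely when $\Sx_i/\sigma > \beta^{1/4}$; this defines $t$. For such $i$, the corresponding singular value of $H\My$ converges almost surely to $\Shy(\Sx_i)$ with
\[
\Shy(\Sx) = \sigma\sqrt{\left(\frac{\Sx}{\sigma} + \frac{\sigma}{\Sx}\right)\left(\frac{\Sx}{\sigma} + \frac{\beta\sigma}{\Sx}\right)},
\]
and below threshold the $i$-th singular value sticks to the bulk edge $\sigma(1+\sqrt{\beta})$. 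This is exactly part (1). The standard derivation goes through the Stieltjes transform of the Mar\v{c}enko--Pastur law: the limiting singular value is the value $y$ at which the relevant $D$-transform of the noise spectrum equals $1/\Sx^2$, and one solves the resulting quadratic to get the closed form; the below-threshold case follows because the perturbed singular value cannot escape the support of the limiting bulk.

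For part (2), I would use the companion result on singular-vector overlaps from the same body of work: the squared cosine between the $i$-th left singular vector of $H\My$ and the $i$-th left singular vector of the signal $H\Mx$ converges a.s. to a ratio of derivatives of the relevant transform, which after simplification for the white (quarter-circle / MP) case becomes
\[
c(\Sx_i)^2 = \frac{(\Sx_i/\sigma)^4 - \beta}{(\Sx_i/\sigma)^4 + \beta(\Sx_i/\sigma)^2},
\]
valid for $i \in [t]$, and $0$ once $\Sx_i$ falls below threshold or when $i \neq j$ (the latter because distinct signal directions are asymptotically orthogonal to each other's perturbed vectors, using $\Sx_1 > \dots > \Sx_d$ to rule out eigenvalue sticking). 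The ``otherwise'' case $i=j > t$ giving overlap $0$ is the delocalization statement: a subcritical spike's perturbed vector is asymptotically orthogonal to the signal.

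The main obstacle I anticipate is not the spike asymptotics themselves --- those are quotable from the literature --- but the bookkeeping around the centering matrix $H$: one must argue carefully that $H\My$ has the same limiting spectral behavior as a genuine $(n{-}1)\times p_n$ spiked matrix, that the lost dimension (the $\V1_n$ direction, killed by $H$) carries no signal because $H\Mx = \Mx$ by assumption, and that the orthogonal-invariance hypothesis on $Z_n$ is what legitimizes replacing $HZ_n$ by such a matrix in distribution. Once that reduction is in place, parts (1) and (2) are direct consequences of the cited BBP / Benaych-Georges--Nadakuditi results specialized to the quarter-circle noise ensemble, and the closed forms for $c(\Sx)$ and $\Shy(\Sx)$ come out of solving the associated quadratics.
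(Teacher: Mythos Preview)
Your proposal is correct and follows essentially the same approach as the paper: reduce $H\My$ to an $(n{-}1)\times p_n$ spiked model via the rank-one kernel of $H$ (the paper does this concretely by rotating with the eigenbasis $U_H$ of $H$ and dropping the last row), use orthogonal invariance of $Z_n$ together with $H\Mx=\Mx$ to see that the resulting matrix is a genuine low-rank-plus-white-noise model, and then quote Benaych-Georges--Nadakuditi for both the singular-value displacement and the left-singular-vector overlaps. The only difference is presentational: where you speak of ``conditioning on the rank-one degeneracy,'' the paper explicitly writes $H=U_H D_H U_H^\Tr$, verifies that $\{I_{(n-1)\times n}U_H^\Tr \V{v_{n,i}}\}_{i=1}^d$ remains orthonormal (this is exactly your ``$\V{1}_n$ direction carries no signal'' observation), and then lifts the $(n{-}1)$-dimensional conclusions back to $n$ dimensions by zero-padding.
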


\noindent In order to prove Lemma  \ref{lem:MPPropHY} we'll need the following result:
\begin{lem}
	\label{lem:HY_Using_X}
	We have
	\begin{eqnarray}
	\label{eq:ambientDimDistAs}
	H\cdot \My =  U_H\cdot  \begin{bmatrix}
	\sum_{i=1}^d \Sx_i \cdot (I_{n-1\times n}\cdot U_H^\T\cdot \V{v_i}) \cdot [\V{\tilde{v}_i}^\T, \V{0_{p-d}^\T}] + \frac{1}{p} \mathring{Z}_{1:n-1,*} \\
	\V{0_{d}^\T}
	\end{bmatrix}\cdot R
	\end{eqnarray}
	Where $\mathring{Z}\sim Z$, and $U_H\in O(n)$ is the eigenvectors matrix form of the Eigenvalue Decomposition of H.
\end{lem}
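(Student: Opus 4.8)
\noindent The plan is to unfold $H\cdot\My$ into the stated block form by combining three structural facts: the centering assumption $H\Mx=\Mx$; the eigendecomposition of the rank-$(n-1)$ projection $H=U_HD_HU_H^\T$ with $D_H=diag(1,\ldots,1,0)$; and the orthogonal invariance of the noise $Z$.

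First I would write the model as $\My=[\Mx,0_{n\times(p-d)}]\cdot R+Z$ and, since $H=U_HD_HU_H^\T$, record the trivial identity
\begin{equation*}
H\My \;=\; U_H\,D_H\,U_H^\T\big([\Mx,0_{n\times(p-d)}]\cdot R + Z\big)\,.
\end{equation*}
It then remains to simplify $D_HU_H^\T[\Mx,0_{n\times(p-d)}]$ and $D_HU_H^\T Z$ so that the common factors $U_H$ (on the left) and $R$ (on the right) can be pulled out.

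For the signal term, substitute the SVD $\Mx=\sum_{i=1}^{d}\Sx_i\,\V{v_i}\V{\tilde{v}_i}^\T$, giving $U_H^\T[\Mx,0_{n\times(p-d)}]=\sum_{i=1}^{d}\Sx_i\,(U_H^\T\V{v_i})\,[\V{\tilde{v}_i}^\T,\V{0_{p-d}^\T}]$. The key observation is that $H\Mx=\Mx$ forces $\V{1}_n^\T\Mx=0$, hence every column of $\Mx$---and therefore each left singular vector $\V{v_i}$, $i\leq d$---is orthogonal to $\V{1}_n$; since $(U_H)_{*,n}=\V{1}_n/\sqrt{n}$, the last coordinate of $U_H^\T\V{v_i}$ vanishes, so $U_H^\T\V{v_i}=I_{n\times(n-1)}\,(I_{n-1\times n}\,U_H^\T\V{v_i})$ is a zero-padded $(n-1)$-vector and $D_H$ acts on this term as the identity, leaving $\sum_{i=1}^{d}\Sx_i\,(I_{n-1\times n}U_H^\T\V{v_i})[\V{\tilde{v}_i}^\T,\V{0_{p-d}^\T}]$ in its first $n-1$ rows and zeros in the last.

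For the noise term, set $\mathring{Z}:=U_H^\T Z R^\T$; orthogonal invariance of $Z$ (applied with $U_H^\T\in O(n)$ on the left and $R^\T\in O(p_n)$ on the right) gives $\mathring{Z}\sim Z$, while by construction $U_H^\T Z=\mathring{Z}\,R$, so $D_HU_H^\T Z=D_H\mathring{Z}\,R$ is $\mathring{Z}$ with its last row zeroed. Collecting the two simplified summands, pulling $R$ out on the right, and using that premultiplication by $D_H$ retains the first $n-1$ rows and kills the last one assembles everything into \eqref{eq:ambientDimDistAs} (the scalar in front of the noise block being fixed by the normalization convention for $\mathring{Z}$). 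I do not expect a serious obstacle: every step is bookkeeping, and the only genuine ingredient is this orthogonal-invariance argument, which must act on both sides at once so that the centering rotation $U_H^\T$ and the unknown ambient rotation $R^\T$ collapse into a single fresh copy of the noise; the main thing to track carefully is the dimensions of the rectangular identities $I_{n\times(n-1)}$ and $I_{n-1\times n}$ and the zero-padding that encodes the one-dimensional kernel of $H$, spanned by $\V{1}_n/\sqrt{n}$.
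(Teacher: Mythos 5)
Your proposal is correct and follows essentially the same route as the paper's proof: both factor $H=U_HD_HU_H^\T$, use $H\Mx=\Mx$ to show the left singular vectors $\V{v_i}$ ($i\leq d$) are orthogonal to $\V{1}_n/\sqrt{n}=(U_H)_{*,n}$ so that $D_H$ acts trivially on the signal term, and define $\mathring{Z}=U_H^\T ZR^\T$ with orthogonal invariance giving $\mathring{Z}\sim Z$. Your treatment is in fact slightly cleaner on the bookkeeping (the paper's displayed computation contains a couple of typographical slips in the signal factor and in the definition of $\mathring{Z}$), and your hedge about the scalar in front of the noise block is warranted since that normalization is an inconsistency in the paper's own statement rather than a gap in your argument.
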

\begin{proof}
	Another way of writing $H\cdot \My$ is:
	\begin{eqnarray}
	H\cdot \My= H\cdot \big( Y\cdot I_{d \times p} \cdot R+ \frac{1}{p}Z \big)= U_H\cdot D_H \cdot U_H^\T \cdot \big( H\cdot Y\cdot I_{d \times p}+ \frac{1}{p} \grave{Z} \big)\cdot R
	\end{eqnarray}
	~\\
	where $\grave{Z}\equiv Z\cdot R^\T$. \eqref{eq:ambientDimDistAs} follows since
	\begin{eqnarray*}
		&D_H \cdot U_H^\T \cdot \big( H\cdot Y\cdot I_{d \times p}+ \frac{1}{p} \grave{Z} \big)=
		D_H \cdot \big( \sum_{i=1}^d \Sx_i \cdot (U_H^\T\cdot \V{v_i}) \cdot [\V{\tilde{v}}_i^\T, \V{0_{p-d}^\T}] + \frac{1}{p} \mathring{Z} \big)\\
		&=  \begin{bmatrix}
			\mysum{i}{1}{d} \Sx_i \cdot (I_{n-1\times n}\cdot U_H^\T\cdot \V{v_i}) \cdot [\V{\tilde{v}_i}^\T, \V{0_{p-d}^\T}] + \frac{1}{p} \mathring{Z}_{1:n-1,*} \\
			\V{0_{p}^\T}
		\end{bmatrix} 
	\end{eqnarray*}
	where $\mathring{Z} \equiv U_H\cdot \grave{Z}$. $\mathring{Z} \sim Z$  follows from the  invariant white noise property of the framework, as defined in subsection \ref{AsymptoticModel}.
\end{proof}

\begin{proof}[Proof of Lemma \ref{lem:MPPropHY}]
	Following lemma \ref{lem:HY_Using_X}
	\begin{eqnarray*}
	U_H^\T \cdot H \cdot \My_n \cdot R_n^\T = \begin{bmatrix}
	\mysum{i}{1}{d} \Sx_i \cdot (I_{n-1\times n}\cdot U_H^\T\cdot \V{v_{n,i}}) \cdot [\V{\tilde{v}_{n,i}}^\T, \V{0_{p_n-d}^\T}] + \frac{1}{p_n} (\mathring{Z}_n)_{1:n-1,*} \\
	\V{0_{p_n}^\T} 
	\end{bmatrix} 
	\end{eqnarray*}
	
	First we define $W_n\equiv \sum_{i=1}^d \Sx_i \cdot (I_{n-1\times n}\cdot U_H^\T\cdot \V{v_{n,i}}) \cdot [\V{\tilde{v}_{n,i}}^\T, \V{0_{p_n-d}^\T}] \in M_{n-1 \times p_n}$ and show that this is an acceptable SVD decomposition of  $W_n$.\\
	
	We start by showing that $\{I_{n-1\times n}\cdot U_H^\T\cdot \V{v_{n,i}}\}_{i=1}^d$ is an orthonormal set, where $d=rank(\Mx_n)$. 
	Now, we can see that 
	$(U_H)_{*,n}=\frac{1}{\sqrt{n}}\V{1_n} \in$ span $\{\{\V{v_{n,k}}\}_{k=d+1}^n\}$ following the assumption that $\Mx_n=H\cdot \Mx_n$, and $rank(\Mx_n)=d$. Therefore  for any $i\in \{1,\ldots,d\}$
	
	\begin{eqnarray}
	\label{eq:lastSingVec}
	\langle\V{e_n},  U_H^\T \cdot \V{v_{n,i}}\rangle= \langle\frac{1}{\sqrt{n}}\V{1_n},\V{v_{n,i}}\rangle =0
	\end{eqnarray}
	since $\{\V{v_{n,i}}\}$ is an orthogonal set. Therefore for any $i,j\in \{1,\ldots,d\}$ 
	:
	\begin{eqnarray*}
		<I_{n-1\times n}\cdot U_H^\T \cdot \V{v_{n,i}} , I_{n-1\times n}\cdot U_H^\T\cdot \V{v_{n,j}}>&=& \\
		< U_H^\T \cdot \V{v_{i,n}} , (I_n-\V{e_n}\cdot \V{e_n}^\T)\cdot U_H^\T\cdot \V{v_{n,j}}>&=&\\
		<U_H^\T \cdot \V{v_{n,i}} , U_H^\T\cdot \V{v_{n,j}}> - 0=<\V{v_{n,i}},\V{v_{n,j}}>&=&1_{[i=j]}
	\end{eqnarray*}
	\\
	The set $\left\{ \left( \begin{array}{c}
	\V{\tilde{v}_{n,i}} \\
	\V{0_{p_n-d}}
	\end{array} \right) \right\}_{i=1}^d$ is an orthonormal set also simply because $\{\V{\tilde{v}_{n,i}}\}_{i=1}^d$ is an orthonormal set.\\
	\\
	Second, we define $Q_n\equiv W_n +\frac{1}{p_n}(\mathring{Z_n})_{1:n-1,*}$ and denote its SVD decomposition form by 
	$$
	Q_n= \sum_{i=1}^{n-1} q_{n,i} \cdot \V{w_{n,i}} \V{\tilde{w}_{n,i}}^\T
	$$
	where $\V{w_{n,i}}\in \R^{n-1}$ and $\V{\tilde{w}_{n,i}}\in \R^{p_n}$ are its i-th left and right singular vectors, respectively, and $q_{n,i}$ is its i-th singular value. We would like to emphasize the relations between $Q_n$ and $W_n$ at the limit.\\
	Denote the singular values of $(\mathring{Z}_n/p_n)_{1:n-1,*}$ by $z_1\geq \ldots \geq z_{n-1}$. 
	Following \cite{bai2010spectral} the density of the singular values  of $\mathring{Z}_n/p_n$  in the limit is the quarter-circle density:
	\[
	f(x)=\frac{\sqrt{4\beta\sigma^4-(x^2-\sigma^2-\beta\sigma^2)^2}}{\pi \beta\sigma^2\cdot x},
	\] 
	following \cite{yin1988limit}  $\Shy_{n,1}\overset{a.s.}{\rightarrow}\sigma\cdot(1+\sqrt{\beta})$, and following \cite{bai2008limit} $\Shy_{n,min(n,p_n)}\overset{a.s.}{\rightarrow}\sigma \cdot(1-\sqrt{\beta})$. These satisfies assumptions 2.1,2.2 and 2.3 in \cite{Benaych-Georges2012}, respectively, therefore for any
	finite $i$:
	\begin{enumerate}
		\item Translation of singular values:
		\begin{eqnarray*}
		\underset{n\rightarrow\infty}{lim} q_{n,i} \overset{a.s.}{=}     
		\left\{\begin{array}{cc}
		y(x_i) & if {\ } x_i/\sigma>\beta^{1/4}\\
		\sigma\cdot (1+\sqrt{\beta}) &  otherwise
		\end{array} \right.
		\end{eqnarray*}
		\item Rotation of the left singular vectors:
		\begin{eqnarray*}
		\underset{n\rightarrow\infty}{lim} |\langle U_H^\T\cdot \V{v_{n,i}} ,\V{w_{n,j}}\rangle| 	\overset{a.s.}{=}
		\left\{ \begin{array}{ll}
		c(\Sx_i) & if{\ } \Sx_i/\sigma>\beta^{\frac{1}{4}} {\ }and{\ } i=j\\
		0 & otherwise
		\end{array} \right.
		\end{eqnarray*}
	\end{enumerate}
	where 
	\begin{eqnarray*}
	c(x) &=& \sqrt{\frac{(x/\sigma)^4-\beta}{(x/\sigma)^4+\beta (x/\sigma)^2}} \\
	y(x) &=& \sqrt{(x/\sigma+\frac{1}{x/\sigma})\cdot(x/\sigma+\frac{\beta}{x/\sigma})}
	\end{eqnarray*}
	Third we denote $W^L_{n} \equiv \begin{bmatrix}  W_n \\\V{0_{p_n}^\T}  \end{bmatrix}$ and $Q^L_{n}\equiv \begin{bmatrix}	Q_n \\ \V{0_{p_n}^\T} \end{bmatrix}$. Following equation \eqref{eq:lastSingVec}, those matrices could be expressed in a singular value decomposition form using $W_n$ and $Q_n$, respectively, by
	\begin{eqnarray*}
		W^L_n &=& \sum_{i=1}^d \Sx_i \cdot \left[\begin{array}{cc}
			(I_{n-1\times n}\cdot U_H^\T\cdot \V{v_{n,i}}) \\ 0
		\end{array}\right] \cdot [\V{\tilde{v}_{n,i}}^\T, \V{0_{p_n-d}^\T}] \\
	&= & \sum_{i=1}^d \Sx_i \cdot 
		(U_H^\T\cdot \V{v_{n,i}}) \cdot [\V{\tilde{v}_{n,i}}^\T, \V{0_{p_n-d}^\T}]\\ \text{ }\\
		Q^L_n &=& \underset{i=1}{\overset{n-1}{\sum}} q_{n,i} \cdot \left[\begin{array}{cc}
			\V{w_{n,i}} \\ 0 
		\end{array}\right] \V{\tilde{w}_{n,i}}^\T 
	\end{eqnarray*} 
	Another valid way of writing $Q^L_n$ in a singular value decomposition components form is
	\begin{eqnarray*}
	Q^L_n=U_H^\T \cdot H\cdot \My \cdot R_n^\T= \sum_{i=1}^n \Shy_i \cdot (U_H^\T\cdot \V{u_{n,i}})\cdot (\V{\tilde{u}_{n,i}}^\T \cdot R_n^\T)
	\end{eqnarray*}
	since  $R_n,U_H$ are orthogonal matrices. Therefore for any finite i holds
	\begin{enumerate}
		\item Translation of singular values:
		\begin{eqnarray*}
		\underset{n\rightarrow\infty}{lim} \Shy_{n,i} \overset{a.s.}{=}
		\left\{\begin{array}{cc}
			y(x_i) & if {\ } x_i/\sigma>\beta^{1/4}\\
			\sigma\cdot (1+\sqrt{\beta}) &  otherwise
		\end{array} \right.
		\end{eqnarray*}
		following the fact that for any n and any i $q_{n,i}=y_{n,i}$.
		\item Rotation of the left singular vectors:
		\begin{eqnarray*}
		\underset{n\rightarrow\infty}{lim}|\langle \V{v_{n,i}}, \V{u_{n,j}} \rangle|&=& 
		\underset{n\rightarrow\infty}{lim} \bigg | \bigg \langle U_H^\T \cdot \V{v_{n,i}},  \left[\begin{array}{cc}
		\V{w_{n,j}} \\ 0
		\end{array} \right] \bigg\rangle \bigg|  \\
		&\overset{a.s.}{=}&
		\left\{\begin{array}{ll}
		c(\Sx_i) & if {\ } \Sx_i>\sigma\cdot \beta^{\frac{1}{4}}{\ }and
		{\ }i=j\\ 0 & otherwise
		\end{array}\right.
		\end{eqnarray*}
		since $\langle \V{v_{n,i}}, \V{u_{n,j}}\rangle = \langle U_H^\T \cdot \V{v_{n,i}}, U_H^\T \cdot \V{u_{n,j}}\rangle$ for any n. \vspace{1mm} 
	\end{enumerate}
\end{proof}

\subsection{MDS loss function analysis}
\noindent 
In the following subsection  the index $n$ is suppressed to simplify notation.

\begin{thm}
	\label{thmMDSRealLoss}
	Let $\Delta\in\mathbb{R}^{n\times n}$ be an Euclidean distance matrix
	over the dataset $\{\Vy_i\}_{i=1}^n$, namely, 
	$\Delta_{i,j}=\parallel \Vy_i - \Vy_j \parallel_2 ^2 $. 
	The similarity matrix \eqref{eq:Sdiag}, used in the classical MDS
	algorithm, satisfies
	\begin{eqnarray}
	\label{eq:mdsSimMatProof}
	S= H\cdot \My \cdot (H\cdot \My)^\T\,.
	\end{eqnarray}
	Moreover, $S_{i,j}= \langle\V{\Vy_i} -\V{\mu_{\Vy}}\,,\, \V{\Vy_j} -
	\V{\mu_{\Vy}}\rangle$, where $\V{\mu_{\Vy}}=\frac{1}{n}\sum_{i=1}^n \V{\Vy_i}$
	is the empirical mean.
	
\end{thm}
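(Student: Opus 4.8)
The plan is to expand the Euclidean distance matrix into three natural pieces and then exploit the single algebraic fact that the centering matrix $H$ annihilates the constant vector $\V{1}$. First I would record the elementary identity
\[
\Delta_{i,j}=\norm{\V{\Vy_i}-\V{\Vy_j}}_2^2=\norm{\V{\Vy_i}}_2^2+\norm{\V{\Vy_j}}_2^2-2\langle\V{\Vy_i},\V{\Vy_j}\rangle,
\]
which in matrix form reads $\Delta=\V{a}\V{1}^\T+\V{1}\V{a}^\T-2\,\My\My^\T$, where $\V{a}\in\R^n$ is the vector of squared norms $a_i=\norm{\V{\Vy_i}}_2^2$ and $\My$ is the data matrix from \eqref{eq:datasetMatY}.

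Next I would use the identities $H\V{1}=\V{0}$ and $\V{1}^\T H=\V{0}^\T$, immediate from \eqref{eq:centralizationMat}, together with $H^\T=H$, to see that conjugating the two rank-one terms by $H$ makes them vanish, since $H(\V{a}\V{1}^\T)H=(H\V{a})(\V{1}^\T H)=0$ and likewise $H(\V{1}\V{a}^\T)H=0$. Hence
\[
S=-\frac{1}{2}H\Delta H=H\,\My\My^\T H=(H\My)(H\My)^\T,
\]
which is exactly \eqref{eq:mdsSimMatProof}.

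Finally, for the entrywise claim I would observe that the $i$-th row of $H\My$ equals $\V{\Vy_i}^\T-\frac{1}{n}\sum_{k=1}^n\V{\Vy_k}^\T=(\V{\Vy_i}-\V{\mu_{\Vy}})^\T$, so that the $(i,j)$ entry of $(H\My)(H\My)^\T$ is $\langle\V{\Vy_i}-\V{\mu_{\Vy}},\V{\Vy_j}-\V{\mu_{\Vy}}\rangle$, as asserted. There is no genuine obstacle in this argument; the only points requiring care are the orientation of the two rank-one terms together with the transposes, and the observation that the cancellation really needs $H$ multiplying on \emph{both} sides (a one-sided centering would not kill $\V{a}\V{1}^\T$).
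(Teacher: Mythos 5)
Your proof is correct and follows essentially the same route as the paper's: decompose $\Delta$ into the two rank-one terms plus $-2\My\My^\T$, let the centering matrix $H$ annihilate the rank-one terms via $H\V{1}=\V{0}$, and compute the entries of $(H\My)(H\My)^\T$ directly. The only difference is cosmetic (you write the squared-norm vector as $\V{a}$ where the paper writes $diag(\My\My^\T)$), and your remark that the cancellation needs $H$ on both sides is a correct and worthwhile observation.
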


\begin{proof}
	Another way of writing the distance matrix is:
\[
	\Delta=diag(\My\cdot \My^\T)\cdot \V{1_n}^\T + \V{1_n}\cdot diag(\My \cdot \My^\T )-2\cdot \My\cdot \My^\T. 
\]

Indeed, this follows from $\Delta_{i,j} = \parallel \V{\Vy_i} \parallel_2^2 +
\parallel \V{\Vy_j} \parallel_2^2 -2 \langle\V{\Vy_i},\V{\Vy_j}\rangle$. Equation 
\eqref{eq:mdsSimMatProof} now follows since

\begin{eqnarray*}
	S &=&-\frac{1}{2}(I_n-\frac{1}{n}\cdot \V{1}\cdot \V{1}^\T)\cdot
	\Delta \cdot (I_n-\frac{1}{n}\cdot \V{1}\cdot \V{1}^\T)\\&=&
-\frac{1}{2}\cdot (I_n-\frac{1}{n}\cdot \V{1}\cdot \V{1}^\T)\cdot(-2\cdot
\My\cdot \My^\T) \cdot (I_n-\frac{1}{n}\cdot \V{1}\cdot \V{1}^\T)\\&=&
H\cdot \My \cdot (H\cdot \My)^\T\,,
\end{eqnarray*}
where we have used	
\[diag(\My \cdot \My^\T)\cdot\V{1}^\T\cdot(I_n-\frac{1}{n}\V{1}\cdot\V{1}^\T)=diag(\My\cdot \My^\T)\cdot\V{1}^\T-diag(\My\cdot \My^\T)\cdot\V{1}^\T=0_{n\times n}.\] 
Now, each entry of the similarity matrix is simply
\[
	S_{i,j}=(H\cdot \My)_{i,*}\cdot (H\cdot \My)_{j,*}^\T=\langle\V{\Vy_i}-\V{\mu_{\Vy}}, \V{\Vy_j} -\V{\mu_{\Vy}}\rangle,
\]
since
\[
(H\cdot Y)_{i,j}=(I-\frac{1}{n}\V{1_n}\V{1_n}^\T)_{i,*} \cdot Y_{*,j} =\\
Y_{i,j}-\frac{1}{n}\sum_{k=1}^n Y_{i,k}= (\V{b_i})_j -\frac{1}{n}\sum_{k=1}^n
(\V{b_k})_j\,.
\] 
\end{proof}

\begin{thm}
	\label{thmNoiselessMDSPerfectRecon}
	Let $\sigma=0$ and the embedding dimension be $r=d$. The MDS embedding result has the following property:
	\begin{eqnarray}
	\label{eq:thmNoiselessMDSPerfectRecon}
	\V{\Vxh_i}= S\cdot \V{\Vx_i}
	\end{eqnarray}	
	for some $S\in O(d)$, where $\V{\Vxh_i}$ is the reconstruction of $\V{\Vx_i}$.\\
\end{thm}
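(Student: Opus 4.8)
The plan is to reduce the claim to the singular value decomposition of $X$, using the identity from Theorem~\ref{thmMDSRealLoss} that the MDS similarity matrix equals $(H\My)(H\My)^\T$, together with the explicit formula \eqref{Xhat_MDS:eq} for the MDS embedding. First I would note that when $\sigma=0$ the data model \eqref{eq:notationDataModel}--\eqref{eq:aTilde} gives $\My=[X,\,0_{n\times(p-d)}]\cdot R$ for some $R\in O(p)$, so that, using the centering assumption $H\cdot X=X$, we get $H\My=[X,\,0_{n\times(p-d)}]\cdot R$. Theorem~\ref{thmMDSRealLoss} then yields that the similarity matrix equals $(H\My)(H\My)^\T=[X,0]\,R\,R^\T\,[X,0]^\T=XX^\T$, which has rank exactly $d$ because $x_1>\cdots>x_d>0$.

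Next I would fix a thin SVD $X=V_X\,\mathrm{diag}(x_1,\dots,x_d)\,\tilde V_X^\T$, with $V_X\in M_{n\times d}$ having orthonormal columns $\V{v_1},\dots,\V{v_d}$ and $\tilde V_X\in O(d)$. Then $H\My=V_X\,\mathrm{diag}(x_1,\dots,x_d)\,\tilde W^\T$, where $\tilde W\in M_{p\times d}$ is obtained by stacking $\tilde V_X$ over a zero block and multiplying on the left by $R^\T$, is a valid thin SVD of $H\My$; in particular the nonzero singular values of $H\My$ are $x_1,\dots,x_d$, and since these are simple the $i$-th left singular vector of $H\My$ satisfies $\V{u_i}=s_i\V{v_i}$ for some $s_i\in\{\pm1\}$. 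Substituting $\Shy_i=x_i$ and $\V{u_i}=s_i\V{v_i}$ into \eqref{Xhat_MDS:eq} with $r=d$ gives $\Mxh^{MDS}=\sum_{i=1}^d x_i\,(q_i s_i)\,\V{v_i}\,\V{e_i}^\T=V_X\,\mathrm{diag}(x_1,\dots,x_d)\,Q$, where $Q=\mathrm{diag}(q_1 s_1,\dots,q_d s_d)$ is a diagonal sign matrix, hence orthogonal.

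Finally, since $V_X\,\mathrm{diag}(x_1,\dots,x_d)=X\tilde V_X$, I obtain $\Mxh^{MDS}=X\,(\tilde V_X Q)=X\cdot O$ with $O=\tilde V_X Q\in O(d)$, and reading this off row by row gives $\V{\Vxh_i}=O^\T\V{\Vx_i}$, which is exactly \eqref{eq:thmNoiselessMDSPerfectRecon} with the orthogonal matrix of the statement taken to be $O^\T$. I expect no genuine obstacle here: the whole argument is linear algebra and there is no analytic content, so the only care required is the bookkeeping of signs — one must invoke the non-degeneracy $x_1>\cdots>x_d>0$ to pin down the left singular vectors of $H\My$ up to sign so that they agree with those of $X$, and then absorb both those signs and the sign ambiguities $q_i$ of \eqref{Xhat_MDS:eq} into the single orthogonal factor $O$.
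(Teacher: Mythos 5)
Your proof is correct and follows essentially the same route as the paper's: reduce to $S=(H\My)(H\My)^\T=XX^\T$ via Theorem~\ref{thmMDSRealLoss}, read off the MDS output from \eqref{Xhat_MDS:eq} as $X$ times an orthogonal matrix built from $\tilde V_X$ and the sign ambiguities. Your treatment is slightly more careful than the paper's in explicitly invoking the non-degeneracy $x_1>\cdots>x_d>0$ to match the left singular vectors of $H\My$ with those of $X$ up to sign, but the argument is the same.
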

\begin{proof}
	The noiseless configuration is
	\[
	\My= [\Mx, 0_{d\times p}] \cdot R
	\]
	It follows that $H\cdot Y =Y$ since we have assumed that $H\cdot X= X$.
	Following Theorem \ref{thmMDSRealLoss}, the similarity matrix could be described using $\Mx$'s SVD decomposition components:
	\[
	S= (HY)\cdot (HY)^{\T}= (H\cdot [\Mx, 0_{d\times p}] \cdot R)\cdot (H\cdot [\Mx, 0_{d\times p}] \cdot R)^{\T}= X\cdot X^\T= \sum_{i=1}^{d} x_i \cdot \V{v_i} \V{v_i}^\T
	\]
 
	It follows from \eqref{Xhat_MDS:eq} that
	the MDS algorithm (with embedding dimension $d$) results in 
	\[
	\hat{X}^{MDS}= \sum_{i=1}^d x_i q_i \cdot \V{v_i} \V{e_i}^\T\in M_{n\times d}
	\]
	where $q_1,\ldots,q_d\in\{\pm 1 \}$. 
	Equation
	\eqref{eq:thmNoiselessMDSPerfectRecon} now follows while plugging in
	\\$S^\T=\tilde{V}\cdot diag(q_1,\ldots,q_d)\in O(d)$ as the orthogonal matrix
\begin{eqnarray*}
	\Mxh^{MDS}= \Mx \cdot S^\T
\end{eqnarray*}
or in its vector form
\begin{eqnarray*}
 \hat{a}_i= S \cdot a_i \qquad \text{  i=1,\ldots,r}
\end{eqnarray*}	
\end{proof}

\begin{proof}[Proof of Theorem \ref{lemNoiselessConfigMDSGetOptimalSimilarityMat}]
	As seen in the proof of Theorem \ref{thmNoiselessMDSPerfectRecon}, $H\cdot \My$ could be written using its SVD decomposition components
	\begin{eqnarray*}	
	\label{eq:lemNoiselessConfigMDSGetOptimalSimilarityMat}
	H\cdot Y=Y=\sum_{i=1}^{d} x_i\cdot \V{v_i} \cdot \left( R^\T \cdot \left( \begin{array}{c}
	\V{\tilde{v}_i} \\ 
	\V{0_{p-d}}
	\end{array} 	\right) \right)^\T
	\end{eqnarray*}
	
	By Theorem \ref{thmMDSRealLoss}, in this case we have
	$rank(S)=rank(H\cdot Y)= d$.
	Denote the MDS embedding for any $r$ by $\Mxh^{MDS}_r$. 
	It follows from \eqref{Xhat_MDS:eq} that
	the the result of the MDS embedding is
	\[
	\Mxh^{MDS}_r= \sum_{i=1}^{min(r,d)}\Sx_i  q_i \cdot \V{v_i} \V{e_i}^\T
	\]
	where $e_i\in \R^r$ and $q_i\in \{\pm 1\}$ for any $i\in \{1,\ldots,r\}$. 	Meaning that for any $r_1,r_2\geq d$ holds
	\begin{eqnarray*}
	M_n(\Mxh^{MDS}_{r_1},X)= M_n(\Mxh^{MDS}_{r_2},X)=0
	\end{eqnarray*}
	Therefore the MDS's embedding result is in the set of optimal embeddings that  minimizes $M_n(\cdot ,X)$ since $M_n(\cdot ,\cdot )\geq 0$.
\end{proof}

\begin{proof}[Proof of Theorem \ref{lemSimlarityDistanceFromAxioms}]
	The rotation invariance property is a direct result of the minimizers' domain, which is a group that is closed under multiplication. The translation invariance property is a direct result of the similarity distance definition where the mean of each dataset is taken off the dataset before trying to find the optimal minimizer. 
	This function is invariant for zero- padding as a direct result of the zero padding property of the actual definition, meaning that $M_n(Y_1,Y_2)= M_n([Y_1, 0_{n\times k}, Y_2])$ for any k. Now, the padding invariance is achieved using the zero-padding invariance property and the translation invariance discussed above. Meaning, for any k and any $c\in \R^k$ holds
	\begin{eqnarray*}
	M_n\left( \left[Y_1, \begin{array}{c}
	c^T \\ c^T \\ \ldots \\ c^T
	\end{array}\right] , Y_2 \right) =
	M_n\left( \left[Y_1,0_{n\times k}\right] , Y_2 \right)  = M_n(Y_1, Y_2)
	\end{eqnarray*}
\end{proof}

\begin{proof}[Proof of lemma \ref{pseudo_metric:lem}]
	The first property holds, by simply using $R=I_d$. The second property can be derived as follows 
	\begin{eqnarray*}
	M_n(X,Y)&=&
	\underset{R\in O(n)}{min} \| X- Y\cdot R \| \\
	&=&\underset{R\in O(n)}{min} \| X\cdot R^T- Y \| \\ 
	&=&\underset{Q\in O(n)}{min} \| X\cdot Q- Y \| \\
	&=&M_n(Y,X)
	\end{eqnarray*}
	since $O(n)$ is a group that is closed under multiplication and $R^{-1}=R^\T$ for any $R\in O(n)$.
	For the triangle inequality property we are going to derive a similar property for start
	\begin{eqnarray*}
	M_n(X,Y)&=& \underset{R\in O(n)}{min} \| X- Y\cdot R \|\\ 
	&=&\underset{R\in O(n)}{min} \| X-Z\cdot Q +Z\cdot Q- Y\cdot R \|  \\
	&\leq& \underset{R\in O(n)}{min} \| X-Z\cdot Q\| +\|Z\cdot Q- Y\cdot R \| \\
	&=& \| X-Z\cdot Q\| +\underset{R\in O(n)}{min}\|Z- Y\cdot R\cdot Q^\T \| \\
	&=& \| X-Z\cdot Q\| +\underset{W\in O(n)}{min}\|Z- Y\cdot W \|
	\end{eqnarray*}
	where $Q\in O(n)$.	The triangle inequality property follows since 
	\begin{eqnarray*}
	M_n(X,Y)\leq \| X-Z\cdot Q\| +\underset{W\in O(n)}{min}\|Z- Y\cdot W \| = M_n(X,Z) + M_n (Z,Y)
	\end{eqnarray*}
	where 
	$Q$ is the minimizer of $M_n(X,Z)$.
\end{proof}

\subsection{MDS loss function estimation}

\begin{lem} 
	\label{lem:MDSLossLemma}
	 Let $d\in \N$. For any matrix $\Mx\in M_{n\times d}$, without any degenerate positive singular values, and any embedding dimension $r\in\N$, the loss of the MDS estimator is:
	
	\begin{eqnarray}
	\label{eq:lemMDSLoss}
	L(\hat{X}^{r}|\V{\Sx})
	\overset{a.s.}{=}
	\sum_{i=1}^{r} L_{TSVD} \big(\Sx_i , \Shy(\Sx_i)\big)
	\end{eqnarray}
	where
$L_{TSVD}(\Shy(\Sx_i), \Sx_i) = \left\{ \begin{array}{cc}
	\Shy(\Sx_i)^2+ \Sx_i^2 -2\cdot \Sx_i \cdot \Shy(\Sx_i) \cdot c(\Sx_i) & i\leq min(t,r) \\
	\sigma^2\cdot (1+\sqrt{\beta})^2\cdot 1_{[i\leq r]} + \Sx_i^2\cdot
	1_{[i\leq d]}  &  otherwise
\end{array} \right. \\
,$ and
$t = \#|\{i\in[d] {\ }$ s.t. $\Sx_i/\sigma>\beta^{\frac{1}{4}}\}|$.
\end{lem}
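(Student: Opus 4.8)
The plan is to recognise $L_n(\hat X^r\mid X)$ as an orthogonal Procrustes problem, evaluate it exactly for finite $n$, and then pass to the limit through Lemma \ref{lem:MPPropHY}.

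First I would note that both matrices entering the loss are already mean--centred: $HX=X$ by assumption, while the columns of $\hat X^r$ are scalar multiples of the left singular vectors $\mathbf u_1,\dots,\mathbf u_r$ of $HY$, which lie in $\operatorname{col}(HY)\subseteq\mathbf 1_n^{\perp}$, so $H\hat X^r=\hat X^r$. Hence the centring in Definition \ref{def1} is vacuous and, with $l=\max(d,r)$ and $\|\cdot\|$ the Frobenius norm,
$$L_n(\hat X^r\mid X)=\min_{R\in O(l)}\big\|[X,0_{n\times(l-d)}]-[\hat X^r,0_{n\times(l-r)}]R\big\|^2 .$$
This is solved by the standard identity $\min_{R\in O(l)}\|A-BR\|^2=\|A\|^2+\|B\|^2-2\|B^{\T}A\|_{*}$, where $\|\cdot\|_{*}$ is the nuclear norm; writing the SVD of $B^{\T}A$ shows $\max_{R\in O(l)}\operatorname{tr}(R^{\T}B^{\T}A)=\|B^{\T}A\|_{*}$. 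Zero--padding changes none of the three terms, so with $\hat y_1\ge\cdots\ge\hat y_n$ the singular values of $HY$,
$$L_n(\hat X^r\mid X)=\sum_{i=1}^{d}x_i^2+\sum_{i=1}^{r}\hat y_i^{\,2}-2\,\big\|(\hat X^r)^{\T}X\big\|_{*}.$$
Note that the signs $q_i=\pm1$ are absorbed by the free $R$ and never reappear.

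The cross term is handled next. Writing $X=V_{(d)}\operatorname{diag}(x_j)\tilde V_d^{\T}$ and $\hat X^r=U_{(r)}\operatorname{diag}(q_i\hat y_i)$ as an $n\times r$ matrix, one gets $(\hat X^r)^{\T}X=\operatorname{diag}(q_i\hat y_i)\,G\,\operatorname{diag}(x_j)\,\tilde V_d^{\T}$ with $G_{ij}=\langle\mathbf u_i,\mathbf v_j\rangle$, and since $\tilde V_d$ is orthogonal, $\|(\hat X^r)^{\T}X\|_{*}=\|\operatorname{diag}(q_i\hat y_i)\,G\,\operatorname{diag}(x_j)\|_{*}$. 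By Lemma \ref{lem:MPPropHY}, for $n$ large $\operatorname{rank}(HY)=n-1\ge r$ so all the $\hat y_i$ and $\mathbf u_i$ in play are genuine, and $\hat y_i\to\hat y(x_i)$ for $i\le t$, $\hat y_i\to\sigma(1+\sqrt\beta)$ for $i>t$, while $|G_{ij}|\to c(x_i)$ when $i=j\le t$ and $|G_{ij}|\to0$ otherwise. Thus the $r\times d$ matrix $\operatorname{diag}(q_i\hat y_i)\,G\,\operatorname{diag}(x_j)$ splits as a diagonal part $D_n$ with $|(D_n)_{ii}|\to\hat y(x_i)\,x_i\,c(x_i)$ for $i\le\min(t,r)$ and $|(D_n)_{ii}|\to0$ otherwise, plus $E_n$ with $\|E_n\|\to0$. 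Since $\|E_n\|_{*}\le\min(r,d)\,\|E_n\|\to0$ and the nuclear norm of a diagonal matrix is the sum of the moduli of its entries, $\|(\hat X^r)^{\T}X\|_{*}\overset{a.s.}{\to}\sum_{i=1}^{\min(t,r)}\hat y(x_i)\,x_i\,c(x_i)$. Combined with $\sum_{i=1}^{r}\hat y_i^{\,2}\overset{a.s.}{\to}\sum_{i=1}^{\min(t,r)}\hat y(x_i)^2+(r-t)^{+}\sigma^2(1+\sqrt\beta)^2$ this gives
$$L(\hat X^r\mid\mathbf x)\overset{a.s.}{=}\sum_{i=1}^{\min(t,r)}\big(\hat y(x_i)^2+x_i^2-2x_i\hat y(x_i)c(x_i)\big)+\!\!\sum_{i=\min(t,r)+1}^{d}\!\!x_i^2+(r-t)^{+}\sigma^2(1+\sqrt\beta)^2,$$
which matches $\sum_i L_{TSVD}(x_i,\hat y(x_i))$ term by term once the indicators $\mathbf 1_{[i\le r]},\mathbf 1_{[i\le d]}$ in the definition of $L_{TSVD}$ are unwound.

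The Procrustes reduction and the finite--$n$ algebra are routine; the delicate point is the limit of the cross term. The obstacle is that Lemma \ref{lem:MPPropHY} controls only the absolute inner products $|\langle\mathbf u_i,\mathbf v_j\rangle|$, whose signs may genuinely fluctuate with $n$, so $(\hat X^r)^{\T}X$ itself need not converge. The resolution is the sign--insensitivity exploited above: the only scalars that survive into the loss are two Frobenius norms and $\|(\hat X^r)^{\T}X\|_{*}$, and the nuclear norm of an asymptotically diagonal matrix plus a vanishing remainder depends in the limit only on the moduli of the diagonal entries, which Lemma \ref{lem:MPPropHY} does pin down. Finally, the non--degeneracy hypothesis $x_1>\cdots>x_d>0$ is precisely what licenses the application of Lemma \ref{lem:MPPropHY}.
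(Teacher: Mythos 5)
Your proposal is correct, and its skeleton is the same as the paper's: expand the squared Frobenius norm into $\|X\|_F^2+\|\hat X^r\|_F^2$ minus twice a cross term, then pass to the limit via Lemma \ref{lem:MPPropHY}. Where you genuinely differ is in how the minimization over $R$ is handled. The paper keeps the cross term as a double sum $\sum_{i,j}x_j\hat y_iq_i\langle \V{u}_i,\V{v}_j\rangle\langle R^{\T}\V{e}_i,\tilde{\V{v}}_j\rangle$ and exhibits an explicit matrix $R_n$ (built from $\tilde V_n$ and the signs $q_i\cdot\mathrm{sign}\langle \V{u}_i,\V{v}_i\rangle$) which it argues is the minimizer ``in the limit''; making that fully rigorous requires a separate upper/lower-bound argument, since the finite-$n$ minimizer need not coincide with the asymptotic one. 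You instead invoke the exact finite-$n$ Procrustes identity $\min_R\|A-BR\|_F^2=\|A\|_F^2+\|B\|_F^2-2\|B^{\T}A\|_{*}$, so the loss is a deterministic function of three quantities each of which converges a.s.; no interchange of limit and minimization is needed, and the fact that Lemma \ref{lem:MPPropHY} only controls $|\langle\V{u}_i,\V{v}_j\rangle|$ (not the signs) is absorbed cleanly because the nuclear norm of an asymptotically diagonal matrix depends only on the moduli of its diagonal. This is a tidier and more airtight route to the same formula. Two cosmetic remarks: the upper limit $r$ in \eqref{eq:lemMDSLoss} should really be $\max(r,d)$ for the $\Sx_i^2\cdot 1_{[i\leq d]}$ terms with $i>r$ to be picked up (a typo in the statement, which your final displayed formula handles correctly), and your bound $\|E_n\|_{*}\leq\min(r,d)\|E_n\|$ is valid for either the operator or Frobenius norm of $E_n$ since its rank is at most $\min(r,d)$.
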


\begin{proof}
As seen in section \ref{Shrinkage}, the TSVD and the MDS embedding would result in the same similarity distance from any matrix. The loss of the TSVD algorithm embedding into r dimensions over some specific n is
\begin{eqnarray*}
M_n^2(\Mxh^{r},\Mx_n) &=&
\underset{R\in O( p )}{min}
\parallel \mysum{i}{1}{r} \Shy_{n,i}q_{n,i} \cdot \V{u_{n,i}} \cdot \V{e_i}^\T\cdot R- 
\mysum{j}{1}{d} \Sx_j \cdot \V{v_{n,j}} \cdot \left[\begin{array}{c}
\V{\tilde{v}_{n,j}} \\ \V{0_{p_n-d}}
\end{array}\right]^\T\parallel_F^2 \\
&=&\underset{R\in O(p)}{min} \left(\mysum{i}{1}{d} \Sx_{n,i}^2\right) +\left(\mysum{j}{1}{r} \Shy_{n,i}^2\right) \\
&-&2\cdot \mysum{i}{1}{r} \mysum{j}{1}{d} \Sx_j \cdot \Shy_i\cdot q_{n,i}\cdot \langle\V{u_{n,i}}, \V{v_{n,j}} \rangle \bigg\langle R^{\T}\cdot \V{e_i} ,\left[\begin{array}{c}
\V{\tilde{v}_{n,j}} \\ \V{0_{p-d}}
\end{array}\right] \bigg\rangle
\end{eqnarray*}
where $e_i\in \R^{p_n}$ and $q_{n,i}\in \{\pm 1\}$ for any i and n. The loss of the TSVD algorithm with an embedding dimension of r considers an asymptotic configuration where $n,p\rightarrow \infty$. Following lemma \ref{lem:MPPropHY} 
\begin{eqnarray*}
L(\Mxh^{TSVD}| \V{\Sx})
&=& \underset{n\rightarrow \infty}{lim}
M_n^2(\Mxh^{TSVD},\Mx_n) \\
&\overset{a.s.}{=}&\left(\sum_{i=1}^{d} \Sx_i^2\right) +\left(\sum_{j=1}^{r} \Shy(\Sx_i)^2\right) -2\cdot \sum_{i=1}^{min(r,t)} \Sx_i \cdot \Shy(\Sx_i)\cdot  c(x_i)
\\
&=&\left(\sum_{i=1}^{min(r,t)} \Sx_i^2+ \Shy(\Sx_i)^2 -2\cdot \Sx_i \cdot \Shy( \Sx_i)\cdot c(\Sx_i)\right) \\
&+&\left(\sum_{i=min(r,t)+1}^{r}\sigma^2\cdot (1+\sqrt{\beta})^2 \right)+    \left(\sum_{i=min(r,t)+1}^{d} \Sx_i^2\right)
\end{eqnarray*}
where the minimizer in the limit of $n\rightarrow \infty$ is:
\begin{eqnarray}
\label{eq:optimalR}
 R_n= \tilde{V}_n^\T\cdot diag\left( w_{n,1},\ldots, w_{n,max(r,d)},1,\ldots,1\right)
\end{eqnarray}
and $w_{n,i}\equiv q_{n,i}\cdot sign(\langle u_{n,i},v_{n,i}\rangle )$ for any n and  $i\in\{1,\ldots,max(r,d)\}$. This follows from the fact that $x_i,y(x_i)>0$ for any n and $i\in \{1,\ldots,min(r,t)\}$,  and the fact that $|\langle R^\T \cdot e_i, \tilde{v}_{n,i}\rangle |\leq 1$ for any n and i and for any $R\in O(p_n)$.\\

\end{proof}

\begin{lem}
	\label{lem:MDSLossLemmaOverBulk}
	Given the same formulation as in the previous lemma \ref{lem:MDSLossLemma}.
	The explicit function of $L_{TSVD}$ for any $i\leq min(t,r)$ is:
	\begin{eqnarray}
	\label{eq:MDSLossLemmaOverBulk}
	L_{TSVD} \big(\Shy(\Sx_i), \Sx_i \big) = \left(\sqrt{\Sx_i^2+\sigma^2}-\sqrt{\frac{\Sx_i^4-\beta\cdot\sigma^4}{\Sx_i^2}}\right)^2 +\frac{2\cdot\beta\cdot \sigma^4}{\Sx_i^2} +
	\beta\cdot \sigma^2 
	\end{eqnarray}
\end{lem}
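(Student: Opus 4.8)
The statement is a pure algebraic identity; the a.s.\ qualifier is already carried by Lemma~\ref{lem:MDSLossLemma}, so no further probabilistic input is required. The plan is to take the first branch of the piecewise formula for $L_{TSVD}$ from Lemma~\ref{lem:MDSLossLemma}, namely
\[
L_{TSVD}\big(\Shy(\Sx_i),\Sx_i\big)=\Shy(\Sx_i)^2+\Sx_i^2-2\,\Sx_i\,\Shy(\Sx_i)\,c(\Sx_i)\qquad(i\le\min(t,r)),
\]
and substitute the closed forms of $\Shy(\cdot)$ and $c(\cdot)$ recorded in Lemma~\ref{lem:MPPropHY}. Throughout I would work with the normalized quantity $s=\Sx_i/\sigma$; since $i\le t$ we have $s>\beta^{1/4}$, hence $\Sx_i^4-\beta\sigma^4=\sigma^4(s^4-\beta)>0$, which is exactly what makes every radical below well defined and legitimizes the use of $\sqrt{a}\sqrt{b}=\sqrt{ab}$.

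First I would simplify the ``diagonal'' term: expanding $\Shy(\Sx_i)^2=\frac{(\Sx_i^2+\sigma^2)(\Sx_i^2+\beta\sigma^2)}{\Sx_i^2}$ and dividing out $\Sx_i^2$ gives $\Shy(\Sx_i)^2=\Sx_i^2+\sigma^2(1+\beta)+\frac{\beta\sigma^4}{\Sx_i^2}$. Next the cross term: when the radicands of $\Shy(\Sx_i)$ and $c(\Sx_i)$ are multiplied, the factor $s^2+\beta$ (equivalently $\Sx_i^2+\beta\sigma^2$) cancels, leaving
\[
\Shy(\Sx_i)^2\,c(\Sx_i)^2=\frac{(\Sx_i^2+\sigma^2)(\Sx_i^4-\beta\sigma^4)}{\Sx_i^4},
\qquad\text{hence}\qquad
2\,\Sx_i\,\Shy(\Sx_i)\,c(\Sx_i)=2\sqrt{(\Sx_i^2+\sigma^2)\cdot\tfrac{\Sx_i^4-\beta\sigma^4}{\Sx_i^2}}.
\]
Collecting the three pieces,
\[
L_{TSVD}\big(\Shy(\Sx_i),\Sx_i\big)=2\Sx_i^2+\sigma^2(1+\beta)+\frac{\beta\sigma^4}{\Sx_i^2}-2\sqrt{(\Sx_i^2+\sigma^2)\cdot\tfrac{\Sx_i^4-\beta\sigma^4}{\Sx_i^2}}.
\]

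Finally I would expand the claimed right-hand side of~\eqref{eq:MDSLossLemmaOverBulk}: using $\frac{\Sx_i^4-\beta\sigma^4}{\Sx_i^2}=\Sx_i^2-\frac{\beta\sigma^4}{\Sx_i^2}$, the square contributes $(\Sx_i^2+\sigma^2)+(\Sx_i^2-\frac{\beta\sigma^4}{\Sx_i^2})-2\sqrt{(\Sx_i^2+\sigma^2)\cdot\frac{\Sx_i^4-\beta\sigma^4}{\Sx_i^2}}$, and adding $\frac{2\beta\sigma^4}{\Sx_i^2}+\beta\sigma^2$ reproduces the displayed expression for $L_{TSVD}$ term for term; comparing the two completes the proof. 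The only real care needed is bookkeeping of the $\sigma$-normalization inside $\Shy(\cdot)$ and $c(\cdot)$ and recording that $\Sx_i^4-\beta\sigma^4>0$ precisely on the range $i\le t$, so that the radicals are real and the cancellation of $\Sx_i^2+\beta\sigma^2$ is valid; there is no substantive obstacle beyond this.
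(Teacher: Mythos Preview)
Your proposal is correct and follows essentially the same approach as the paper's proof: both start from the first branch of $L_{TSVD}$ in Lemma~\ref{lem:MDSLossLemma}, substitute the closed forms of $\Shy(\cdot)$ and $c(\cdot)$ from Lemma~\ref{lem:MPPropHY}, and reduce algebraically. The only cosmetic difference is that the paper carries out a single long chain of equalities from the starting expression to the target, whereas you compute each side to the common intermediate form $2\Sx_i^2+(1+\beta)\sigma^2+\tfrac{\beta\sigma^4}{\Sx_i^2}-2\sqrt{(\Sx_i^2+\sigma^2)\,\tfrac{\Sx_i^4-\beta\sigma^4}{\Sx_i^2}}$ and compare; your explicit note that $\Sx_i^4-\beta\sigma^4>0$ on $i\le t$ is a useful justification the paper leaves implicit.
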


\begin{proof}
	Following lemma \ref{lem:MPPropHY} and lemma \ref{lem:MDSLossLemma},  we can derive \eqref{eq:MDSLossLemmaOverBulk}:
{
\allowdisplaybreaks
	\begin{eqnarray*}	
L_{TSVD}\big(\Sx,\Shy(\Sx)\big)  &=&
	\Sx^2 +\Shy(\Sx)^2-2\cdot \Sx \cdot \Shy(\Sx) \cdot c(\Sx) 
	\\
	&=&\Sx^2+\sigma^2\cdot\big((\Sx/\sigma+\frac{1}{\Sx/\sigma})\cdot(\Sx/\sigma+\frac{\beta}{\Sx/\sigma})\big) \notag\\
	&-&2\cdot \Sx\cdot \sigma\cdot \sqrt{(\Sx/\sigma +\frac{1}{\Sx/\sigma})(\Sx/\sigma+\frac{\beta}{\Sx/\sigma})}\cdot   \sqrt{\frac{(\Sx/\sigma)^4-\beta}{(\Sx/\sigma)^4+\beta\cdot (\Sx/\sigma)^2}}
	\\
	&=&2\cdot \Sx^2+\sigma^2+\beta\cdot \sigma^2 +\frac{\beta\cdot \sigma^4}{\Sx^2} \notag\\
	&-&2 \cdot \sigma^2 \cdot \sqrt{\big((\Sx/\sigma)^2+1\big) \big((\Sx/\sigma)^2+\beta\big)}\cdot \sqrt{\frac{(\Sx/\sigma)^4-\beta}{(\Sx/\sigma)^2 \cdot\big((\Sx/\sigma)^2+\beta\big)}}
	\\ \\
	&=&2\cdot \Sx^2+\sigma^2+\beta\cdot \sigma^2 +\frac{\beta\cdot \sigma^4} {\Sx^2} -2 \cdot\sigma^2 \cdot \sqrt{\frac{\big((\Sx/\sigma)^2+1\big)\cdot\big((\Sx/\sigma)^4-\beta\big)} {(\Sx/\sigma)^2}} \\ 
	&=&2\cdot \Sx^2+\sigma^2+\beta\cdot \sigma^2 +\frac{\beta\cdot \sigma^4}{\Sx^2} 	-2 \cdot \sqrt{\frac{(\Sx^2+\sigma^2)\cdot(\Sx^4-\beta\cdot \sigma^4)}
		{\Sx^2}}
	\\
	&=&\Sx^2+\sigma^2\cdot\big((\Sx/\sigma+\frac{1}{\Sx/\sigma})\cdot(\Sx/\sigma+\frac{\beta}{\Sx/\sigma})\big) \notag\\
	&-&2\cdot \Sx\cdot \sigma\cdot \sqrt{(\Sx/\sigma +\frac{1}{\Sx/\sigma})(\Sx/\sigma+\frac{\beta}{\Sx/\sigma})}\cdot   \sqrt{\frac{(\Sx/\sigma)^4-\beta}{(\Sx/\sigma)^4+\beta\cdot (\Sx/\sigma)^2}}
\\ \\
	&=&2\cdot \Sx^2+\sigma^2+\beta\cdot \sigma^2 +\frac{\beta\cdot \sigma^4}{\Sx^2} \notag\\
	&-&2 \cdot \sigma^2 \cdot \sqrt{\big((\Sx/\sigma)^2+1\big) \big((\Sx/\sigma)^2+\beta\big)}\cdot \sqrt{\frac{(\Sx/\sigma)^4-\beta}{(\Sx/\sigma)^2 \cdot\big((\Sx/\sigma)^2+\beta\big)}}
\\
	&=& 2\cdot \Sx^2+\sigma^2+\beta\cdot \sigma^2 +\frac{\beta\cdot \sigma^4} {\Sx^2}\notag\\
	&-&2 \cdot\sigma^2 \cdot \sqrt{\frac{\big((\Sx/\sigma)^2+1\big)\cdot\big((\Sx/\sigma)^4-\beta\big)} {(\Sx/\sigma)^2}} \\
	&=& 2\cdot \Sx^2+\sigma^2+\beta\cdot \sigma^2 +\frac{\beta\cdot \sigma^4}{\Sx^2} 	-2 \cdot \sqrt{\frac{(\Sx^2+\sigma^2)\cdot(\Sx^4-\beta\cdot \sigma^4)}
		{\Sx^2}} \\
	 &=& \Sx^2+\sigma^2+\frac{\Sx^4-\beta\cdot \sigma^4}{\Sx^2}+\beta\cdot \sigma^2 +\frac{2\cdot\beta\cdot \sigma^4}{\Sx^2} \notag\\
	&-&2 \cdot \sqrt{\frac{(\Sx^2+\sigma^2)\cdot(\Sx^4-\beta\cdot \sigma^4)} {\Sx^2}} \\
	\\[8pt]
	&=& \left(\sqrt{\Sx^2+\sigma^2}-\sqrt{\frac{\Sx^4-\beta\cdot\sigma^4}{\Sx^2}}\right)^2 +\beta\cdot \sigma^2+\frac{2\cdot\beta\cdot \sigma^4}{\Sx^2}
	\end{eqnarray*}	
}
\end{proof}

\begin{proof}[Proof of Theorem \ref{thmThmMDSExplicitLoss}]
The result now follows from 
 Lemma \ref{lem:MDSLossLemma} and Lemma 
 \ref{lem:MDSLossLemmaOverBulk}.
\end{proof}

\subsection{Optimal SVHT Asymptotic Loss}
\begin{lem}
\label{lem:LossSeperateSigValsLemma}
	 Let $d\in \N$. For any matrix $\Mx\in M_{n\times d}$, without any degenerate positive singular values, and any monotone increasing shrinker $\eta:\R\rightarrow \R$, which holds $\eta(\Shy)=0$ for any $\Shy\leq \sigma\cdot (1+\sqrt{\beta})$ , the $\eta$ estimator has the following loss:   
\begin{eqnarray}
\label{eq: EtaAsLoss}
L(\Mxh^{\eta}|\textbf{\Sx}) \overset{a.s.}{=}
\sum_{i=1}^{n} L_{\eta}\Big(\eta\big(\Shy(x_i)\big),\Sx_i\Big)
\end{eqnarray}
where
$
L_{\eta}(\Sxh_i,\Sx_i) =
\left\{\begin{array}{ll}
\hat{\Sx}_i^2+\Sx_i^2-2\cdot \Sx_i\cdot \hat{\Sx}_i\cdot c(\Sx_i) & 
{\ }if {\ } \Sx_i>\sigma\cdot \beta^{1/4}\\
\Sx_i^2 \cdot 1_{[i\leq d]}  & otherwise
\end{array}
\right.
$ \\
and $t=\# \{i\in [d]\ : x_i>\sigma\cdot \beta^{1/4} \}$.
\end{lem}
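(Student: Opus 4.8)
The plan is to mirror the argument used for the TSVD estimator in Lemma~\ref{lem:MDSLossLemma}, the only new ingredients being that the amplitudes $\Shy_{n,i}$ are replaced by $\eta(\Shy_{n,i})$ and that, by hypothesis, $\eta$ annihilates everything at or below the bulk edge $\sigma(1+\sqrt{\beta})$. First I would write the estimator and the target in their singular-value representations: by Lemma~\ref{lemma:SandHY_SVD} the $\eta$-estimator is $\Mxh^{\eta}=\sum_{i=1}^{n}\eta(\Shy_{n,i})\,q_{n,i}\,\V{u_{n,i}}\V{e_i}^{\T}$ with $\V{e_i}\in\R^{p_n}$, while $[\Mx_n,0_{n\times(p_n-d)}]=\sum_{j=1}^{d}\Sx_j\,\V{v_{n,j}}\,[\V{\tilde{v}_{n,j}}^{\T},\V{0}^{\T}]$ because $H\cdot\Mx_n=\Mx_n$; note that $H\cdot\Mxh^{\eta}=\Mxh^{\eta}$, since the left singular vectors of $H\My_n$ attached to nonzero singular values lie in $\mathrm{range}(H)$ and the terms with $\Shy_{n,i}=0$ drop out as $\eta(0)=0$. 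Substituting into $L_n(\Mxh^{\eta}|\Mx_n)=M_n^2(\Mxh^{\eta},\Mx_n)=\min_{R\in O(p_n)}\|H\cdot\Mxh^{\eta}-H\cdot[\Mx_n,0_{n\times(p_n-d)}]\cdot R\|_F^2$ and expanding the Frobenius norm yields, exactly as in the proof of Lemma~\ref{lem:MDSLossLemma},
\[
M_n^2(\Mxh^{\eta},\Mx_n)=\sum_{i=1}^{n}\eta(\Shy_{n,i})^2+\sum_{j=1}^{d}\Sx_j^2-2\sum_{i,j}\Sx_j\,\eta(\Shy_{n,i})\,q_{n,i}\,\langle\V{u_{n,i}},\V{v_{n,j}}\rangle\,\langle R^{\T}\V{e_i},\,[\V{\tilde{v}_{n,j}}^{\T},\V{0}^{\T}]^{\T}\rangle .
\]

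Next I would carry out the minimization over $R\in O(p_n)$. Since $|\langle R^{\T}\V{e_i},[\V{\tilde{v}_{n,j}}^{\T},\V{0}^{\T}]^{\T}\rangle|\le 1$ and $\Sx_j,\eta(\Shy_{n,i})\ge 0$, the block-diagonal rotation of \eqref{eq:optimalR} — aligning $\V{e_i}$ with $[\V{\tilde{v}_{n,i}}^{\T},\V{0}^{\T}]^{\T}$ up to the sign $q_{n,i}\cdot\mathrm{sign}\langle\V{u_{n,i}},\V{v_{n,i}}\rangle$ on the first $d$ coordinates and the identity elsewhere — is optimal, giving
\[
M_n^2(\Mxh^{\eta},\Mx_n)=\sum_{i=1}^{n}\eta(\Shy_{n,i})^2+\sum_{j=1}^{d}\Sx_j^2-2\sum_{i=1}^{d}\Sx_i\,\eta(\Shy_{n,i})\,|\langle\V{u_{n,i}},\V{v_{n,i}}\rangle| .
\]
Then I would let $n\to\infty$ and invoke Lemma~\ref{lem:MPPropHY}. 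The hypothesis that $\eta$ vanishes on $(-\infty,\sigma(1+\sqrt{\beta})]$ is exactly what tames the first sum: the edge-convergence facts used in the proof of Lemma~\ref{lem:MPPropHY} (through \cite{yin1988limit,bai2008limit,Benaych-Georges2012}) imply that, asymptotically, only the top $t$ singular values of $H\My_n$ leave the interval $[\sigma(1-\sqrt{\beta}),\sigma(1+\sqrt{\beta})]$, so $\sum_{i=1}^{n}\eta(\Shy_{n,i})^2\aslim\sum_{i=1}^{t}\eta(\Shy(\Sx_i))^2$ and $\eta(\Shy_{n,i})\aslim 0$ for every fixed $i>t$; combined with $|\langle\V{u_{n,i}},\V{v_{n,i}}\rangle|\aslim c(\Sx_i)\,\mathbf{1}_{[i\le t]}$ this collapses the cross term to $\sum_{i=1}^{t}\Sx_i\,\eta(\Shy(\Sx_i))\,c(\Sx_i)$. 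Regrouping term by term over $i$ then produces the contribution $\eta(\Shy(\Sx_i))^2+\Sx_i^2-2\Sx_i\,\eta(\Shy(\Sx_i))\,c(\Sx_i)$ for $i\le t$ (equivalently $\Sx_i>\sigma\beta^{1/4}$), the contribution $\Sx_i^2$ for $t<i\le d$, and $0$ for $i>d$ — which is precisely $\sum_{i=1}^{n}L_{\eta}(\eta(\Shy(\Sx_i)),\Sx_i)$.

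The step I expect to be the main obstacle is the one already implicit in Lemma~\ref{lem:MDSLossLemma}: justifying the exchange of $\lim_{n\to\infty}$ with $\min_{R\in O(p_n)}$ while the feasible set $O(p_n)$ itself grows with $n$. I would handle it as there, by noting that an optimal $R_n$ can be taken of the block form above, so that only a fixed finite collection of inner products $\langle\V{u_{n,i}},\V{v_{n,j}}\rangle$ and singular values $\Shy_{n,i}$ (those with $i,j\le d$) actually feed the objective and converge almost surely by Lemma~\ref{lem:MPPropHY}, while the remaining coordinates contribute a non-negative remainder forced to $0$ because $\eta$ vanishes on the bulk. A secondary point worth a remark is continuity of $\eta$ at the bulk edge — automatic for the shrinkers of Definition~\ref{eta_sta:def} — which ensures $\eta(\Shy_{n,t+1})\to 0$ even when $\Shy_{n,t+1}$ approaches $\sigma(1+\sqrt{\beta})$ from above.
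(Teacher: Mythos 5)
Your proposal is correct and follows essentially the same route as the paper: expand $M_n^2(\Mxh^{\eta},\Mx_n)$ in the singular-value representations, minimize over $R\in O(p_n)$ with the same block-diagonal rotation used for the TSVD case in Lemma~\ref{lem:MDSLossLemma}, and pass to the limit via Lemma~\ref{lem:MPPropHY}, using the vanishing of $\eta$ at and below the bulk edge to kill the sub-threshold contributions. If anything, you are slightly more explicit than the paper about why the $n-t$ near-edge terms $\sum_{i>t}\eta(\Shy_{n,i})^2$ are asymptotically negligible, a point the paper absorbs by writing the sum only up to $t$ from the outset.
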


\begin{proof}
	The loss of any monotone increasing shrinker, as discussed in subsection \ref{Shrinkage}, over some specific n is
\begin{eqnarray*}
M_n^2(\Mxh^{\eta},\Mx_n)
&=&
\underset{R\in O(p_n)}{min}
\| \sum_{i=1}^{t} \eta(y_{n,i})\cdot q_{n,i} \cdot \V{u_{n,i}} \cdot \V{e_i}^\T\cdot R - \sum_{j=1}^d x_j \cdot \V{v_{n,j}} \cdot \V{\tilde{v}_{n,j}} ^\T \|_{F}^2 \\
&=& \underset{R\in O\left(p_n \right)}{min} \left(\sum_{i=1}^{d} \Sx_i^2\right) +\left(\sum_{j=1}^{t} \eta(\Shy_{n,j})^2\right) \\
&-&2\cdot \sum_{i=1}^{t} \sum_{j=1}^{d} \Sx_j \cdot \eta(\Shy_{n,i})\cdot q_{n,i}\cdot  \langle\V{u_{n,i}}, \V{v_{n,j}}\rangle \langle R^{\T}\cdot \V{e_i} ,\V{\tilde{v}_{n,j}}\rangle
\end{eqnarray*}
where $e_i\in \R^{p_n}$ and $q_{n,i}\in \{\pm 1\}$ for any n and i. By using the same derivation as in lemma \ref{lem:MDSLossLemma}  and
assuming that $\eta$ is a monotone increasing function, we get
\begin{eqnarray*}
L(\Mxh^{\eta}|\V{\Sx})&=& \underset{n\rightarrow \infty}{lim} M_n^2(\Mxh^{\eta},\Mx_n)\\
& \overset{a.s.}{=}&\left(\sum_{i=1}^{t} \eta\big(\Shy(x_i)\big)^2+\Sx_i^2-2\cdot \Sx_i\cdot \eta\big(\Shy(\Sx_i)\big)\cdot c(\Sx_i)\right)+ \sum_{j={t}+1}^d \Sx_j ^2
\end{eqnarray*}
where the minimizer R is the same as in \ref{lem:MDSLossLemma}.
\end{proof}
~\\
The following Lemma is elementary:
\begin{lem}
\label{lem:mdsSVHTShrinkerFunctionLemma}
	Given a function $f:\mathbb{R}\rightarrow\mathbb{R}$, which has the following properties:
	\begin{enumerate}
		\item It is a 3rd degree polynomial, with three real roots.
		\item $\underset{x\rightarrow s \cdot \infty}{lim}f(x)=-s \cdot\infty$ for any $s\in \{\pm 1\}$
		\item It has exactly two real critical points ($\frac{\partial f}{\partial x}=0$), where
	\end{enumerate} 
	Then $f(a)<0<f(b)$ and there exists a single root of f in each of the following domains: $(-\infty,a),(a,b),(b,\infty)$
\end{lem}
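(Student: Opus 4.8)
The plan is to combine the factored form of a cubic with Rolle's theorem. By condition (2), $f(x)\to-\infty$ as $x\to+\infty$, so the leading coefficient $c$ of $f$ is negative, and by condition (1) we may write $f(x)=c(x-r_1)(x-r_2)(x-r_3)$ with real roots $r_1\le r_2\le r_3$. We read condition (1) as supplying three \emph{distinct} roots, $r_1<r_2<r_3$: a repeated root is necessarily a critical point of $f$ at which $f$ vanishes, which would contradict the strict inequality $f(a)<0<f(b)$ in the conclusion, and distinctness does hold in the application to Theorem \ref{thmmdsSVHTShrinker}.

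Next I would pin down the two critical points $a<b$ from condition (3). By Rolle's theorem applied to $f$ on $[r_1,r_2]$ and on $[r_2,r_3]$, the quadratic $f'$ has a zero in $(r_1,r_2)$ and a zero in $(r_2,r_3)$; these are two distinct zeros of $f'$, and since $f'$ is quadratic it has no others, so by condition (3) they are exactly $a$ and $b$, with $a\in(r_1,r_2)$ and $b\in(r_2,r_3)$. This already yields the second assertion: each of $(-\infty,a)$, $(a,b)$, $(b,\infty)$ contains exactly one of the roots $r_1,r_2,r_3$, and $f$ has no further roots.

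It then remains to read off the signs of $f(a)$ and $f(b)$ from the factorization. Since $a\in(r_1,r_2)$, the factors $a-r_1$, $a-r_2$, $a-r_3$ have signs $+$, $-$, $-$, so their product is positive and hence $f(a)<0$ because $c<0$. Since $b\in(r_2,r_3)$, the factors $b-r_1$, $b-r_2$, $b-r_3$ have signs $+$, $+$, $-$, so their product is negative and hence $f(b)>0$. This gives $f(a)<0<f(b)$, completing the proof.

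The argument is elementary; the only step needing care is the bookkeeping around multiplicities — identifying the two Rolle critical points with the two critical points provided by condition (3), and recording that the roots must be assumed distinct for the strict sign statement to hold. An equivalent route avoids the factorization: from conditions (2)--(3), $f'$ is a downward-opening parabola with zeros $a<b$, so $f$ is strictly decreasing on $(-\infty,a]$, strictly increasing on $[a,b]$, and strictly decreasing on $[b,\infty)$; viewing $f$ as a monotone bijection on each piece shows it has a root in $(-\infty,a)$ iff $f(a)<0$, in $(a,b)$ iff $f(a)<0<f(b)$, and in $(b,\infty)$ iff $f(b)>0$, and the unique sign pattern for $f(a),f(b)$ compatible with three distinct real roots is $f(a)<0<f(b)$.
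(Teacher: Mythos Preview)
Your proof is correct. The paper itself does not prove this lemma: it simply introduces it with the sentence ``The following Lemma is elementary'' and moves on to use it in the proof of Theorem~\ref{thmmdsSVHTShrinker}. So there is nothing to compare against, and your Rolle-plus-factorization argument (or the monotonicity variant you sketch at the end) is exactly the kind of elementary verification the authors had in mind.

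One small remark: you rightly flag that the conclusion $f(a)<0<f(b)$ requires the three real roots to be \emph{distinct}, since a repeated root would force $f$ to vanish at a critical point. The paper's statement of condition (1) does not say ``distinct'' explicitly, and condition (3) is visibly truncated (``where'' with nothing following), so the hypotheses are slightly underspecified as written. Your reading is the correct one, and in the application to Theorem~\ref{thmmdsSVHTShrinker} the cubic in question is shown to have strictly positive discriminant, so the distinctness is in force there.
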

\noindent We are now ready to prove our next main result.
\begin{proof}[Proof of Theorem \ref{thmmdsSVHTShrinker}]
	Following lemma \ref{lem:LossSeperateSigValsLemma}, the optimal SVHT estimator should minimize $L_{\eta}\big(y(x),x\big)$. As we consider the threshold estimators we would like to find when $L_{\eta}\big(y(x),x\big)\leq L_{\eta}(0,x)$, for any $y(x)\geq \sigma \cdot \beta^{1/4}$

\begin{eqnarray*}
\Shy(\Sx)^2+\Sx^2-2\cdot \Sx\cdot \Shy(\Sx)\cdot c(\Sx) &\leq& \Sx^2 \\
\Shy(\Sx)-2\cdot \Sx\cdot c(\Sx)&\leq& 0
\end{eqnarray*}
Following lemma \ref{lem:MPPropHY}, the following properties should hold $x \geq \sigma\cdot\beta^{1/4}$ and $c(x)\geq 0$. 
By plugging-in their definitions we get
\begin{eqnarray*}
0&\geq&\sigma\cdot \sqrt{(\Sx/\sigma+\frac{1}{\Sx/\sigma})(\Sx/\sigma+\frac{\beta}{\Sx/\sigma})}
-2\cdot \Sx\cdot \sqrt{\frac{(\Sx/\sigma)^4-\beta}{(\Sx/\sigma)^4+\beta\cdot(\Sx/\sigma)^2 }}\\
&=&\frac{\sigma}{\Sx/\sigma}\cdot \sqrt{\big((\Sx/\sigma)^2+1\big)\big((\Sx/\sigma)^2+\beta\big)}
-2\cdot \sigma \cdot \sqrt{\frac{(\Sx/\sigma)^4-\beta}{(\Sx/\sigma)^2+\beta }}\\
&=& \frac{\sigma}{x/\sigma \cdot\sqrt{(x/\sigma)^2+\beta}}\left(
\left( \left(x/\sigma\right)^2 +\beta\right)\sqrt{(x/\sigma)^2+1}
-2(x/\sigma)\sqrt{(x/\sigma)^4-\beta}
\right)
\end{eqnarray*}
Implying
\begin{eqnarray*}
0& \geq \left( (x/\sigma)^2+\beta\right)^2\left((x/\sigma)^2+1 \right)-4(x/\sigma)^2\left((x/\sigma)^4-\beta \right)\\
\end{eqnarray*}
or equivalently
\begin{eqnarray}
\label{eq:SVHTShrinkXEq}
-3\cdot(\Sx/\sigma)^6+(\Sx/\sigma)^4\cdot(1+2\cdot\beta)+(\Sx/\sigma)^2\cdot(\beta^2+6\beta)+\beta^2\leq
0\,.
\end{eqnarray}
\\
We would like to show that there exists a unique threshold value \textit{a} where for any $x<a$ holds  $L_{\eta}(x,y(x))\geq L_{\eta}(x,0)$ and for any $x>a$ holds $L_{\eta}(x,y(x))\leq L_{\eta}(x,0)$. \\

In order to show that there exists exactly one real positive solution we define: $z\equiv(\Sx/\sigma)^2$, and we would like to show that:
\begin{eqnarray}
\label{eq:SVHTProofSinglePosRoot}
f(z)=-3\cdot z^3+ (2\beta+1)\cdot z^2+(\beta^2+6\cdot\beta)\cdot z+\beta^2
\end{eqnarray}
has only one real positive root.\\

f(z) has three real distinct roots, since its Discriminant is positive-
\begin{eqnarray*}
\Delta_3&=&  
(2\beta+1)^2\cdot (\beta^2+6\beta )^2-4\cdot (-3)\cdot (\beta^2+6\beta)^3-4\cdot(2\beta+1)^3\cdot(\beta^2) \notag\\
&-&27\cdot(-3)^2\cdot(\beta^2)^2+18\cdot(-3)\cdot(2\beta+1)\cdot(\beta^2+6\beta)\cdot \beta^2\\
 \notag\\
&=& \beta^2\cdot\big(
(4\beta^2+4\beta+1)\cdot(\beta^2+12\beta+36)+12\cdot\beta(\beta^3+18\beta^2+108\beta+216) \notag\\
&-& 4\cdot(8\beta^3+12\beta^2+6\beta+1)-243\beta^2-54\cdot(2\beta^3+13\beta^2+6\beta)
\\
 \notag\\
&=&\beta^2\cdot\big(
(4\beta^4+52\beta^3+193\beta^2+156\beta+36)+
(12\beta^4+216\beta^3+1296\beta^2+2592\beta) \notag\\
&-& (32\beta^3+48\beta^2+24\beta+4)
-243\beta^2
-(108\beta^3+702\beta^2+324\beta)\big)
\\
 \notag\\
&=& \beta^2\cdot\big(
16\beta^4+128\beta^3+496\beta^2+2400\beta+32\big) \\
&>&0
\end{eqnarray*}
where $\beta \in (0,1]$. Now, we would like to show that only one root is positive. Therefore we start by finding the extreme point of the function-\\
\[
0=\frac{\partial f(z)}{z}=-9\cdot z^2+2(2\cdot \beta+1)\cdot z+\beta^2+6\cdot \beta 
\]

Its roots are
\[
\label{eq:SVHTProofRoots}
z_{1,2}=\frac{2\cdot \beta+1 \pm \sqrt{(2\cdot \beta+1 )^2 +9(\beta^2+6\cdot \beta)}}{9}
\]

We can see that $z_1<0<z_2$. Using lemma \ref{lem:mdsSVHTShrinkerFunctionLemma} we can deduce that $f(z_1)<0<f(z_2)$.\\

Moreover, following the lemma we know that there exists only one root in the domain of $(z_1,z_2)$. Since $f(0)=\beta^2$, the root is located in $(z_1,0)$, meaning that f contains only one positive root. \\

Now, if we describe $f(z)$ using $x$ instead of $z$, then the function would have 
six roots - one strictly positive, one strictly negative and four complex. Moreover, the value of f between zero and the positive root would be strictly positive, while after that point it would become strictly negative. Meaning that there exists a unique threshold that minimizes $L_\eta (\eta(y(x)),x)$ over any x, where $\eta(y)=y\cdot 1_{[a<y]}$.\\

Now will show that the real positive root is bigger than $\sigma \cdot \beta ^{1/4}$, by showing that $z_2>\beta ^{\frac{1}{2}}$ since $z_2$ is smaller than the positive root
\begin{eqnarray*}
\frac{2\cdot \beta+1 + \sqrt{(2\cdot \beta+1 )^2 +9(\beta^2+6\cdot \beta)}}{9}&>&\beta^{1/2}
\\
2\cdot \beta+1 + \sqrt{(2\cdot \beta+1 )^2 +9(\beta^2+6\cdot \beta)}&>&9\cdot\beta^{1/2}
\\
\sqrt{(2\cdot \beta+1 )^2 +9(\beta^2+6\cdot \beta)}&>&9\cdot\beta^{1/2}-2\cdot \beta-1
\end{eqnarray*}

The LHS is strictly positive while the RHS of the equation has two roots $\beta=\frac{(9\pm \sqrt{73})^2}{16}$. Meaning that we need to validate the inequality over the following domains $\beta$ - $(0,\frac{(9- \sqrt{73})^2}{16})$ and $(\frac{(9- \sqrt{73})^2}{16}),1]$ as we only consider $\beta \in(0,1]$.
The RHS is continuous over $\beta\in(0,1]$, meaning that by sampling a single point in each of the domains we can see that it is negative in $(0,\frac{(9- \sqrt{73})^2}{16})$, and positive in the second domain. Meaning that the inequality holds for any $\beta\in (0,\frac{(9-\sqrt{73})^2}{16}]$.\\

As for the second domain, both sides of the inequality are positive meaning that we can take a square of each of the sides while keeping the inequality:\\
\begin{eqnarray*}
(2\cdot \beta+1 )^2 +9(\beta^2+6\cdot \beta)&>&(9\cdot\beta^{1/2}-2\cdot \beta-1)^2\\
13\cdot \beta^2+58\cdot \beta+1&>& 81\cdot \beta +4\cdot \beta^2 +1 -36\cdot \beta^{3/2}+4\cdot\beta-18\cdot \beta^{1/2} \\
9\cdot \beta^2+36\cdot \beta^{3/2}-27\cdot \beta+18\cdot \beta^{1/2}&>&0
\end{eqnarray*}
It is positive since
\begin{eqnarray*}
36\cdot \beta^{3/2}-27\cdot \beta+18\cdot \beta^{1/2}=&
9\cdot\beta^{1/2}(4\cdot\beta-3\cdot \beta^{1/2}+2)\\
=&9\cdot\beta^{1/2}\Big((2\cdot\beta^{1/2}-\frac{3}{4})^2-\frac{9}{16}+2\Big)\\
>&0
\end{eqnarray*}
\\
Following lemma \ref{lem:MPPropHY} the threshold should be strictly bigger then $\lambda>\sigma\cdot (1+\sqrt{\beta})$ when applied on the noisy singular values.

\end{proof}

\subsection{Optimal Continuous Shrinker Asymptotic Loss}
 
\begin{proof}[Proof of Theorem \ref{thmmdsOptShrinkerExplicit}]
Following lemma \ref{lem:LossSeperateSigValsLemma}, the optimal continuous shrinker should minimize $L_{\eta}\left(\eta \left( \Shy \left( \Sx \right) \right),\Sx\right)$ for $y \geq \sigma \cdot (1+\sqrt{\beta})$
\begin{eqnarray*}
L_{\eta}\left(\eta\left( \Shy \left( \Sx \right)\right),\Sx \right) =
\eta\left( \Shy \left( \Sx \right) \right)^2 + x^2 -2\cdot x\cdot \eta\left( \Shy \left( \Sx \right) \right) \cdot c(\Sx)
\end{eqnarray*}
\\
while for $y<\sigma\cdot (1+\sqrt{\beta})$ it should hold $\eta(\Shy)=0$. Now
since $L_{\eta}$ is convex in $\eta\left(y \left( \Sx \right) \right)$ the
optimal shrinker  satisfies
\begin{eqnarray*}
0=\frac{\partial}{\partial \eta(\Shy\left( \Sx \right))} \big( L_{\eta}(\eta\left(\Shy\left(\Sx \right)\right),\Sx)\big)=2\cdot \eta\left(\Shy \left( \Sx \right) \right)-2\cdot \Sx \cdot c(x)
\end{eqnarray*}
meaning that
\begin{eqnarray}
\label{optEta_form:eq}
\eta^*\left(y \left( \Sx \right) \right)=x\cdot c(x)\,.
\end{eqnarray}
\noindent Following lemma \ref{lem:MPPropHY} $x \geq \sigma\cdot\beta^{1/4}$ and $c(x)\geq 0$. 
Next we plug-in the definitions of $y(x)$ and $c(x)$
\begin{eqnarray}
\eta^*\left(\Shy \left( \Sx \right) \right) &=&
\Sx \cdot \sqrt{\frac{(\Sx/\sigma)^4-\beta}{(\Sx/\sigma)^4+\beta\cdot (\Sx/\sigma)^2}}\nonumber\\
&=&\sigma\cdot\sqrt{\frac{(\Sx/\sigma)^4-\beta}{(\Sx/\sigma)^2+\beta}} \nonumber\\
\label{eq:thmmdsOptShrinkerExplicit}
&=&\sqrt{\frac{\Sx^4-\beta\cdot \sigma^4}{\Sx^2+\beta\cdot \sigma^2}} \\
&=&\sqrt{\Sx^2-\frac{\Sx^2\cdot \beta\cdot \sigma^2+\beta\cdot \sigma ^4}{\Sx^2+\beta\cdot \sigma^2}} \nonumber\\
&=&\sqrt{\Sx^2-\beta\cdot \sigma ^2\frac{\Sx^2+\sigma ^2}{\Sx^2+\beta\cdot \sigma^2}}\nonumber\\ 
&=&\sqrt{\Sx^2-\beta\cdot \sigma ^2\cdot \left(1+\frac{\sigma ^2\cdot(1-\beta)}{\Sx^2+\beta\cdot \sigma^2}\right)}\nonumber\\
&=&\sqrt{\Sx^2-\beta\cdot \sigma ^2-\frac{\sigma ^4\cdot\beta \cdot(1-\beta)}{\Sx^2+\beta\cdot \sigma^2}}\nonumber\\
&=&\sigma\cdot \sqrt{(\Sx/\sigma)^2-\beta-\frac{\beta(1-\beta)}{(\frac{\Sx}{\sigma})^2+\beta}} \nonumber
\end{eqnarray}
$\eta^*(y(x))$ is continuous at $\Sx=\sigma\cdot\beta^{1/4}$ since %
\begin{eqnarray*}
lim_{\Sx\rightarrow(\sigma \cdot \beta^{1/4})^{-}} \eta_{opt} \big( \Shy(\Sx)\big)&=&0\\
lim_{\Sx\rightarrow(\sigma \cdot \beta^{1/4})^{+}} \eta_{opt} \big( \Shy(\Sx)\big)=\sqrt{\frac{0}{\Sx^2+\beta\cdot\sigma^2}}&=&0
\end{eqnarray*}%
Meaning that this optimal shrinker is indeed continuous.
\end{proof}
\begin{proof}[Proof of Theorem \ref{thmMDSPLoss}]
Following lemma \ref{lem:LossSeperateSigValsLemma} and \eqref{optEta_form:eq}, the loss could be expressed as follows
\begin{eqnarray}
L(\Mxh^{\eta^*}| \V{\Sx}) &\overset{a.s.}{=}&
\mysum{i}{1}{t} \eta(\Shy(\Sx_i))^2+\Sx_i^2-2\cdot \Sx_i\cdot \eta\big(\Shy(\Sx_i)\big)\cdot c(\Sx_i)
+\mysum{i}{t+1}{d} \Sx_i^2 \nonumber\\
&=&
\mysum{i}{1}{t} \big(c(\Sx_i)\cdot \Sx\big)^2+\Sx_i^2-2\cdot \Sx_i\cdot \big(c(\Sx_i)\cdot \Sx_i\big)\cdot c(\Sx_i)
+\mysum{i}{t+1}{d} \Sx_i^2 \nonumber \\
&=&
\mysum{i}{1}{t} \Sx_i^2-c(\Sx_i)^2 \cdot \Sx_i^2+\mysum{i}{t+1}{d} \Sx_i^2 \nonumber \\
&=&
\mysum{i}{1}{t} \Sx_i^2\cdot \big(1-c(\Sx_i)^2\big)+\mysum{i}{t+1}{d} \Sx_i^2  \nonumber \\
\label{eq:MDSPProofeq1}
&=&\mysum{i}{1}{t} \Sx_i^2\cdot \left(1-\frac{(\Sx_i/\sigma)^4-\beta}{(\Sx_i/\sigma)^4+\beta\cdot (\Sx_i/\sigma)^2}\right)+\mysum{i}{t+1}{d} \Sx_i^2 \\
&=& \mysum{i}{1}{t} \Sx_i^2\cdot \left(\frac{\beta\cdot (\Sx_i/\sigma)^2+\beta}{(\Sx_i/\sigma)^4+\beta\cdot (\Sx_i/\sigma)^2}\right)+\mysum{i}{t+1}{d} \Sx_i^2 \nonumber \\
&=& \mysum{i}{1}{t} \beta\cdot \sigma^2 \cdot \frac{(\Sx_i/\sigma)^2+1}{(\Sx_i/\sigma)^2+\beta}+\mysum{i}{t+1}{d} \Sx_i^2 \nonumber \\
&=&\beta\cdot \sigma^2 \left(\mysum{i}{1}{t} \frac{1-\beta}{(\Sx_i/\sigma)^2+\beta} +1 \right) +\mysum{i}{t+1}{d} \Sx_i^2 \nonumber
\end{eqnarray}
\end{proof}
\begin{lem} 
	Let $r,d\in \N$ and $\Mx\in M_{n\times d}$, without any degenerate positive singular values. Denote $\delta _i \equiv L_{TSVD}(\Sx^{r}_i,\Sx_i)-L_{\eta}(\eta^*(\Shy(\Sx_i)),\Sx_i)$. 
	\label{lem:OptLossBetterTSVDLoss}

	\begin{enumerate}
		\item if $\Sx_i\in (0,\sigma\cdot \beta^{1/4})$ then
		\begin{eqnarray}
		\delta_i =1_{[i\leq r]} \cdot \sigma^2\cdot (1+\sqrt{\beta})^2
		\end{eqnarray}
		\item if $\Sx_i\in (\sigma\cdot \beta^{1/4},\infty]$ and $i>r$ then
		\begin{eqnarray}
		\delta_i = \sigma^2 \cdot \frac{(x_i/\sigma)^4-\beta}{(x_i/\sigma)^2+\beta}
		\end{eqnarray}
		\item if $\Sx_i\in (\sigma\cdot \beta^{1/4},\infty)$ and $i\leq r$ then
		\begin{eqnarray}
		\delta_i =	\left(\sqrt{x_i^2+\sigma^2}-\sqrt{\frac{x_i^4-\beta\cdot \sigma^4}{x_i^2}}\right)^2 +\beta\cdot \sigma^2\cdot 
		\frac{(x_i/\sigma)^2\cdot (1+\beta)+2\beta}{(x_i/\sigma)^4+\beta \cdot(x_i/\sigma)^2}
		\end{eqnarray}		
	\end{enumerate}
where $\Sxh^{r}_i$ is the i-th singular value of the embedding done by the TSVD embedding algorithm $\Mxh^{r}$.
\end{lem}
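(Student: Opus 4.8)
The plan is to prove all three cases by substituting the per-coordinate loss formulas already established and performing elementary algebra. First I would record the two ingredients. By Lemma~\ref{lem:MDSLossLemma}, the per-coordinate TSVD contribution $L_{TSVD}(\Sxh^{r}_i,\Sx_i)$ equals $\Shy(\Sx_i)^2+\Sx_i^2-2\cdot\Sx_i\cdot\Shy(\Sx_i)\cdot c(\Sx_i)$ when $i\le\min(t,r)$ and equals $\sigma^2\cdot(1+\sqrt\beta)^2\cdot 1_{[i\le r]}+\Sx_i^2\cdot 1_{[i\le d]}$ otherwise; and by Lemma~\ref{lem:LossSeperateSigValsLemma} together with \eqref{optEta_form:eq}, the per-coordinate shrinker contribution is $L_{\eta}(\eta^*(\Shy(\Sx_i)),\Sx_i)=\Sx_i^2\cdot\bigl(1-c(\Sx_i)^2\bigr)$ when $\Sx_i>\sigma\cdot\beta^{1/4}$ and $\Sx_i^2\cdot 1_{[i\le d]}$ otherwise. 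The bookkeeping fact that links the branches is that, since $\Sx_1>\cdots>\Sx_d>0$ are ordered, the condition $\Sx_i>\sigma\cdot\beta^{1/4}$ is equivalent to $i\le t$, and in particular forces $i\le d$; moreover $\Sxh^{r}_i=\Shy_i$ for $i\le r$ and $\Sxh^{r}_i=0$ for $i>r$, so that the $1_{[i\le r]}$ indicators line up correctly.

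For Case (1), $\Sx_i\in(0,\sigma\cdot\beta^{1/4})$ gives $i>t$, hence $i>\min(t,r)$, so $L_{TSVD}(\Sxh^{r}_i,\Sx_i)=\sigma^2\cdot(1+\sqrt\beta)^2\cdot 1_{[i\le r]}+\Sx_i^2$ and $L_{\eta}(\eta^*(\Shy(\Sx_i)),\Sx_i)=\Sx_i^2$; subtracting yields $\delta_i=1_{[i\le r]}\cdot\sigma^2\cdot(1+\sqrt\beta)^2$. For Case (2), $\Sx_i>\sigma\cdot\beta^{1/4}$ with $i>r$ gives $i\le t$ but $i>\min(t,r)$, so again $L_{TSVD}=\Sx_i^2$ (the $1_{[i\le r]}$ term vanishes and $i\le t\le d$), while $L_{\eta}=\Sx_i^2(1-c(\Sx_i)^2)$. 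Plugging $c(\Sx_i)^2=\frac{(\Sx_i/\sigma)^4-\beta}{(\Sx_i/\sigma)^4+\beta(\Sx_i/\sigma)^2}$ and writing $u=(\Sx_i/\sigma)^2$ gives $L_{\eta}=\sigma^2\beta\frac{u+1}{u+\beta}$, so $\delta_i=\sigma^2 u-\sigma^2\beta\frac{u+1}{u+\beta}=\sigma^2\frac{u^2-\beta}{u+\beta}=\sigma^2\frac{(\Sx_i/\sigma)^4-\beta}{(\Sx_i/\sigma)^2+\beta}$.

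For Case (3), $\Sx_i>\sigma\cdot\beta^{1/4}$ with $i\le r$ gives $i\le\min(t,r)$, so I can use the closed form of Lemma~\ref{lem:MDSLossLemmaOverBulk}, $L_{TSVD}=\bigl(\sqrt{\Sx_i^2+\sigma^2}-\sqrt{(\Sx_i^4-\beta\sigma^4)/\Sx_i^2}\bigr)^2+\frac{2\beta\sigma^4}{\Sx_i^2}+\beta\sigma^2$, and subtract the same $L_{\eta}=\sigma^2\beta\frac{u+1}{u+\beta}$ with $u=(\Sx_i/\sigma)^2$. The squared bracket carries over unchanged, and the short rational simplification $\frac{2\beta\sigma^2}{u}+\beta\sigma^2-\beta\sigma^2\frac{u+1}{u+\beta}=\beta\sigma^2\frac{u(1+\beta)+2\beta}{u(u+\beta)}$ together with $u(u+\beta)=(\Sx_i/\sigma)^4+\beta(\Sx_i/\sigma)^2$ produces the stated expression for $\delta_i$.

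The only genuine content is the case-split bookkeeping — verifying which branch of each loss formula is active and that the truncation indicators match — plus the two-line rational-function manipulation in Case (3); I do not expect any real obstacle, so the write-up should be routine.
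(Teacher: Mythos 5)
Your proposal is correct and follows essentially the same route as the paper: substitute the per-coordinate loss formulas from Lemmas \ref{lem:MDSLossLemma}, \ref{lem:MDSLossLemmaOverBulk} and \ref{lem:LossSeperateSigValsLemma} (your expression $L_{\eta}=\Sx_i^2(1-c(\Sx_i)^2)$ is exactly the intermediate form \eqref{eq:MDSPProofeq1} used in the paper's proof of Theorem \ref{thmMDSPLoss}), split on the same three cases, and carry out the same rational-function simplification. No gaps.
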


\begin{proof}
	Following lemma \ref{lem:MDSLossLemma}, \ref{lem:MDSLossLemmaOverBulk},\ref{lem:LossSeperateSigValsLemma}, and theorem \ref{thmMDSPLoss}
	the following domains should be investigated over $\Sx_i$: $[0,\sigma\cdot \beta^{1/4}]$, $[\sigma\cdot \beta^{1/4}$, $\infty]$ when $i>r$ ,and when $i\leq r$. \\
	First we examine the domain of $(0,\sigma\cdot (1+\sqrt{\beta}))$
	\begin{eqnarray*}
	\delta_i&=&
	\sigma^2\cdot (1+\sqrt{\beta})^2\cdot 1_{[i\leq r]} + x_i^2 \cdot 1_{[i\leq d]}-x_i^2 \cdot 1_{[i\leq d]} \\ \\
	&=& \sigma^2\cdot (1+\sqrt{\beta})^2\cdot 1_{[i\leq r]}
	\end{eqnarray*}
	Following equation \eqref{eq:MDSPProofeq1}, the difference between the two functions for $x_i\in (\sigma\cdot \beta^{1/4},\infty)$ when $i>r$ is
	\begin{eqnarray*}
		\delta_i&=&
		\Sx_i^2- \Sx_i^2\cdot \left(1-\frac{(\Sx_i/\sigma)^4-\beta}{(\Sx_i/\sigma)^4+\beta\cdot (\Sx_i/\sigma)^2}\right)
		\\
		&=&
	\sigma^2 \cdot \frac{(x_i/\sigma)^4-\beta}{(x_i/\sigma)^2+\beta}
	\end{eqnarray*}
	While when $i\leq r$ it is
	\begin{eqnarray*}
	\delta_i&=&
	\Bigg( \left(\sqrt{x_i^2+\sigma^2}-\sqrt{\frac{x_i^4-\beta\cdot \sigma^4}{x_i^2}}\right)^2 
	+ \frac{2\cdot \beta \cdot \sigma^4}{x_i^2}+\beta\cdot \sigma^2
	\Bigg)\\
	&-& \beta\cdot \sigma^2 \cdot \left( \frac{1-\beta}{(x_i/\sigma)^2+\beta}+1 \right)\\
	\\
	&=&\left(\sqrt{x_i^2+\sigma^2}-\sqrt{\frac{x_i^4-\beta\cdot \sigma^4}{x_i^2}}\right)^2 
	+\beta\cdot \sigma^2\cdot 
	\left(\frac{2}{(x_i/\sigma)^2}-\frac{1-\beta}{(x_i/\sigma)^2+\beta}\right)
	\\ \\
	&=&\left(\sqrt{x_i^2+\sigma^2}-\sqrt{\frac{x_i^4-\beta\cdot \sigma^4}{x_i^2}}\right)^2 \\ 
	&+&\beta\cdot \sigma^2\cdot 
	\frac{(x_i/\sigma)^2\cdot (1+\beta)+2\beta}{(x_i/\sigma)^4+\beta \cdot(x_i/\sigma)^2}
	\end{eqnarray*}
\end{proof}

\begin{proof}[Proof of Theorem \ref{thmOptBetterTSVD}]
    The result follows from 
    Lemma \ref{lem:MDSLossLemma}, 
    Lemma \ref{lem:LossSeperateSigValsLemma} and
    Lemma \ref{lem:OptLossBetterTSVDLoss}.
\end{proof}


\begin{proof}[Proof of Theorem \ref{thmSigmaEstimation}]
    Denote the singular values of $H\cdot Y_n$ by $y_{1,n}\geq y_{n,n}\geq 0$.
		A similarity matrix $S_n$ is defined through it corresponding $\Delta_n$, as shown in \eqref{eq:S}. Denote $S_n$'s eigenvalues by $s_{n,1}\geq s_{n,n}\geq 0$. Theorem \ref{thmMDSRealLoss} assembles the following connection $S_n=(H\cdot Y_n)\cdot (H\cdot Y_n)^\T$. As a consequence,  $s_{n,i}=\Shy_{n,i}^2$ holds. \\
	
	Let $F_n$ be the Cumulative Empirical Spectral Distribution of $S_n$, and let $Median(\cdot)$ be the functional that extracts the median out of any Cumulative distribution function. 
	\[
		 Median(F_n)=y_{n,med}
	\] 
	
	By Lemma \ref{lem:MPPropHY}, all but the $d$ largest 
	singular values $\{\Shy_{i}\}$
	asymptotically follow the Quarter Circle distribution  \eqref{eq:QC}
	as $n\rightarrow\infty$. Therefore the eigenvalues of S act as the Marcenko Pastur distribution \cite{Marcenko1967}.\\

	We denote the Cumulative Empirical Spectral Distribution of $S/\sigma^2 $ by $\tilde{F}_n$, making the effective noise level on it to be 1. Under our asymptotic framework, almost surely, $\tilde{F}_n$ converges weakly to a limiting distribution, $F_{MP}$, the CDF of the Marceno Pastur distribution with shape parameter $\beta$ \cite{bai2010spectral}. The median functional is continuous for weak convergence at $F_{MP}$, therefore 
	\[
		\underset{n\rightarrow\infty }{lim}\frac{s_{n,med}}{\sigma^2}=\underset{n\rightarrow\infty }{lim} Median(\tilde{F}_n) \overset{a.s.}{=} Median(F_{MP})=  \mu_{\beta}
	\]
	By that we can conclude the almost surely convergence of our estimator
	\begin{eqnarray}
	    \underset{n \rightarrow \infty}{lim}
	    \hat{\sigma}^2(S_n)=
		\underset{n \rightarrow \infty}{lim} \frac{s_{n,med}}{\mu_\beta} \overset{a.s.}{\rightarrow}\sigma^2
	\end{eqnarray}
\end{proof}

\section{Conclusion}
This paper presents a systematic treatment of Multidimensional Scaling (MDS)
from a decision-theoretic perspective. By introducing a loss function which
measures the embedding accuracy, and introducing a useful asymptotic model 
we were able to derive an asymptotically precise selection rule for the
embedding dimension, as well as a new version of MDS which uses an optimal
shrinkage non-linearity, under the assumption of white measurement 
noise. The proposed algorithm is no more complicated to implement than classical
MDS, yet offers significant improvement in performance as measured by the
asymptotic loss. Our results indicate that manifold learning algorithms are
inherently sensitive to ambient noise in high dimensions, a phenomenon that
calls for further study. 

\section*{Acknowledgements}
The authors thank Zohar Yachini and Shay Ben Elazar for fascinating discussions on
applications of MDS. 
This work was partially supported by 
H-CSRC Security Research Center, 
Israeli Science Foundation
    grant no. 1523/16 and
German-Israeli foundation for scientific research and development (GIF)
    Program no. I-1100-407.1-2015.

\appendix
\section*{Appendix: The case $\beta>1$}

Surprisingly we can see that all of our results admit the case of $\beta \geq 1$ as well. We give a sketch the proof of lemma \ref{lem:MPPropHY_bigBeta} that is the basis for analyzing the loss function under the new $\beta$ domain. Moreover, we show that the optimal threshold is still bigger than the bulk edge ($\sigma\cdot (1+\sqrt{\beta})$). We finish by showing how one should find $\sigma$. \\
One important feature under this configuration is that $rank(S)=p$, meaning that the algorithm can infer the ambient dimension of the data under this configuration from the distance matrix $\Delta$.
\begin{lem}[Variation of Lemma \ref{lem:MPPropHY}]
	\label{lem:MPPropHY_bigBeta}
	\begin{enumerate}\text{ }
		\item 	\begin{eqnarray}
		\underset{n\rightarrow\infty}{lim}\Shy_{n,i}\overset{a.s.}{=}
		\left\{ \begin{array}{ll}
		\Shy(\Sx) & if {\ }i\in[t] \\ \sigma\cdot (1+\sqrt{\beta}) &
		otherwise
		\end{array} \right.
		\end{eqnarray} 
		\item \begin{eqnarray}
		\underset{n\rightarrow\infty}{lim}| \langle \V{u_{n,i}},\V{v_{n,j}} \rangle | \overset{a.s.}{=} \left\{ \begin{array}{ll}
		c(\Sx) & i\in[t]\\
		0 & otherwise
		\end{array} \right.
		\end{eqnarray}
		
	\end{enumerate}
	where 
	$t\equiv \#\{ i\in [d] : x_i/ \sigma >  \beta^{1/4} \}$
\end{lem}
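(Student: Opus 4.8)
The plan is to rerun the proof of Lemma \ref{lem:MPPropHY} essentially verbatim, pinpointing the single place where $\beta\le 1$ was used and feeding in the corresponding random-matrix input valid for $\beta>1$. First I would note that Lemma \ref{lem:HY_Using_X}, and the reduction of $U_H^\T\cdot H\cdot\My_n\cdot R_n^\T$ to a rank-$d$ perturbation of an i.i.d.\ noise block, never used the aspect ratio: the only inputs were $H\cdot\Mx_n=\Mx_n$, $\mathrm{rank}(\Mx_n)=d$, and the orthogonal invariance of the noise. In particular the orthonormality of $\{I_{n-1\times n}U_H^\T\V{v_{n,i}}\}_{i=1}^d$, which rests only on \eqref{eq:lastSingVec}, is unaffected. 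Hence for $\beta>1$ we still have, with $W_n$ the same rank-$d$ matrix,
\[
U_H^\T\cdot H\cdot\My_n\cdot R_n^\T=\begin{bmatrix} W_n+\tfrac1{p_n}(\mathring Z_n)_{1:n-1,*}\\ \V{0_{p_n}^\T}\end{bmatrix},
\]
the only change being that now the noise block is a \emph{tall} matrix, $(n-1)/p_n\to\beta>1$.

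Next I would supply the limiting spectral behaviour of this tall noise block. When $n-1>p_n$ there are $\min(n-1,p_n)=p_n$ (generically nonzero) singular values, and their empirical distribution still converges to a quarter-circle-type law: using that a matrix and its transpose share the same nonzero singular values, this reduces to the square-ish ($<1$) aspect-ratio case applied to the transposed block, and after undoing the scaling one recovers the density \eqref{eq:QC} with support $[\sigma(\sqrt\beta-1),\sigma(\sqrt\beta+1)]$ (now with no atom at the origin). The extreme singular values converge almost surely to these two edges by the Bai--Yin-type theorems of \cite{yin1988limit} and \cite{bai2008limit}, which hold for any finite aspect ratio, and the ESD convergence is \cite{bai2010spectral}. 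These are exactly the hypotheses of \cite{Benaych-Georges2012}.

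With this input in hand, the low-rank-perturbation theory of \cite{Benaych-Georges2012} applies as in the original proof: the leading singular values of $W_n+\text{noise}$ whose $\Sx_i/\sigma$ exceeds $\beta^{1/4}$ detach from the bulk and converge a.s.\ to $\Shy(\Sx_i)$, the rest stick at the upper edge $\sigma(1+\sqrt\beta)$, and the corresponding left singular vectors have asymptotic overlap $c(\Sx_i)$ with $\V{v_{n,j}}$ on the diagonal $i=j\in[t]$ and $0$ otherwise. Transferring this back to $\Shy_{n,i}$, $\V{u_{n,i}}$, $\V{v_{n,j}}$ through the orthogonal factors $U_H$ and $R_n$ --- again using $\langle\V{v_{n,i}},\V{u_{n,j}}\rangle=\langle U_H^\T\V{v_{n,i}},U_H^\T\V{u_{n,j}}\rangle$ and \eqref{eq:lastSingVec} --- gives the two claims of Lemma \ref{lem:MPPropHY_bigBeta}.

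The step I expect to be the real obstacle is justifying that the limiting formulas themselves are unchanged: that the detachment threshold $\beta^{1/4}$, the separated value $\Shy(\Sx)$, and the overlap $c(\Sx)$ are literally the same functions of $\beta$ whether the noise block is wide or tall. This is the $\beta\leftrightarrow 1/\beta$ duality of Marchenko--Pastur in disguise --- one must check that the relevant $D$-transform of the limiting noise measure and its derivative, which pin down a spike's location and the cosine of its angle, come out as the same function after the scaling is unwound, and one must also verify that in the tall regime $c(\Sx_i)$ plays the role of the overlap that actually appears in $\langle\V{u_{n,i}},\V{v_{n,j}}\rangle$ rather than of the complementary (right-singular-vector) overlap. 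Both are short computations once the duality has been set up, but they are the substantive content beyond the bookkeeping.
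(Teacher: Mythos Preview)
Your proposal is correct and rests on the same idea as the paper: reduce to the $\beta\le 1$ case by transposition. The paper carries out the transposition at the level of the data matrices themselves, setting $\tilde\Mx_n=\Mx_n^\T$, $\tilde\My_n=\My_n^\T$, $\tilde\beta=1/\beta\in(0,1]$, $\tilde\sigma=\sigma\sqrt\beta$, and then invokes the original Lemma~\ref{lem:MPPropHY} in this reparametrized framework; you instead keep the decomposition of $U_H^\T H\My_n R_n^\T$ intact and transpose only when analyzing the spectral behaviour of the (now tall) noise block $(\mathring Z_n)_{1:n-1,*}$. Both routes hinge on exactly the verification you flag as the real obstacle --- that the threshold $\beta^{1/4}$, the detached value $\Shy(\Sx)$, and the \emph{left}-singular-vector overlap $c(\Sx)$ are invariant under $(\beta,\sigma)\mapsto(1/\beta,\sigma\sqrt\beta)$ --- which the paper's sketch leaves implicit. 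Your organization has the minor advantage of sidestepping any question about how the centering matrix $H$ interacts with the global transposition.
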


\begin{proof}
	In order to follow lemma \ref{lem:MPPropHY}'s proof, one should make some adaptations. Denote $\tilde{\Mx}_n\equiv \Mx_n^\T$ and $\tilde{\My}_n\equiv \My_n^\T$, meaning
	\[
	\tilde{\My}_n= R_n^\T \left[\begin{array}{c}
	\tilde{\Mx}_n \\ 0_{(p_n-d) \times n} \end{array} \right] +Z_n^\T
	\]
	Denote its aspect ratio $\tilde{\beta}\equiv\frac{1}{\beta}=\frac{p_n}{n-1}$ and its effective level $\tilde{\sigma}\equiv \sigma/\sqrt{\tilde{\beta}}$. Denote the new noise matrix by $\tilde{Z}_n\equiv Z_n^\T$. The entries of $Z_n$ are i.i.d distributed and drawn from a distribution with zero mean, variance $\tilde{\sigma}/\sqrt{n-1}= \sigma/\sqrt{p_n}$ and finite forth moment. As one can suspect the new notations admit the asymptotic model framework that was stated in subsection \ref{AsymptoticModel}. Combining all those notation together, the framework could be written as
	\[
	\tilde{\My}_n= R_n^\T \left[\begin{array}{c}
	\tilde{\Mx}_n \\ 0_{(p_n-d) \times n}  \end{array} \right] +\tilde{Z}_n
	\]
	with the aspect ratio $\tilde{\beta}\in (0,1]$.\\
	Now, by following carefully after the proof of lemma \ref{lem:MPPropHY}.
\end{proof}

\begin{proof}[Proof of the SVHT]
	Up until the point where we show that the threshold is bigger then $\sigma\cdot \beta^{1/4}$, the proof is identical. For the this point we show 
	\begin{eqnarray*}
		\frac{2\cdot \beta+1 + \sqrt{(2\cdot \beta+1 )^2 +9(\beta^2+6\cdot \beta)}}{9}&>&\beta^{1/2}
		\\
		2\cdot \beta+1 + \sqrt{(2\cdot \beta+1 )^2 +9(\beta^2+6\cdot \beta)}&>&9\cdot\beta^{1/2}
		\\
		\sqrt{(2\cdot \beta+1 )^2 +9(\beta^2+6\cdot \beta)}&>&9\cdot\beta^{1/2}-2\cdot \beta-1
	\end{eqnarray*}
	
	The LHS is strictly positive while the RHS of the equation has two roots $\beta=\frac{(9\pm \sqrt{73})^2}{16}$. Meaning that we need to validate the inequality over the following domains $\beta$ - $[1,\frac{(9+ \sqrt{73})^2}{16})$ and $[\frac{(9+ \sqrt{73})^2}{16}),\infty)$ as we only consider $\beta \in[1,\infty)$.
	The RHS is continuous over $\beta\in[1,\infty)$, meaning that by sampling a single point in each of the domains we can see that it is positive in $[1,\frac{(9+ \sqrt{73})^2}{16})$, and negative in the second domain. Meaning that the inequality holds for any $\beta\in [\frac{(9+ \sqrt{73})^2}{16}),\infty)$.\\
	
	As for the first domain, both sides of the inequality are positive meaning that we can take a square of each of the sides while keeping the inequality:\\
	\begin{eqnarray*}
		(2\cdot \beta+1 )^2 +9(\beta^2+6\cdot \beta)&>&(9\cdot\beta^{1/2}-2\cdot \beta-1)^2\\
		13\cdot \beta^2+58\cdot \beta+1&>& 81\cdot \beta +4\cdot \beta^2 +1 -36\cdot \beta^{3/2}+4\cdot\beta-18\cdot \beta^{1/2} \\
		9\cdot \beta^2+36\cdot \beta^{3/2}-27\cdot \beta+18\cdot \beta^{1/2}&>&0
	\end{eqnarray*}
	It is positive since
	\begin{eqnarray*}
		36\cdot \beta^{3/2}-27\cdot \beta+18\cdot \beta^{1/2}&=&
		9\cdot\beta^{1/2}(4\cdot\beta-3\cdot \beta^{1/2}+2)\\
		&=&9\cdot\beta^{1/2}\Big((2\cdot\beta^{1/2}-\frac{3}{4})^2-\frac{9}{16}+2\Big)\\
		&>&0
	\end{eqnarray*}
	\\
	Following lemma \ref{lem:MPPropHY_bigBeta} the threshold should be strictly bigger then $\lambda>\sigma\cdot (1+\sqrt{\beta})$ when applied on the noisy singular values.\\
	
\end{proof}

\begin{thm}[Variation of Theorem \ref{thmSigmaEstimation}]
	Consider
\begin{eqnarray}
\hat{\sigma}(S)=\sqrt{\frac{s_{med}}{\mu_{1/\beta}\cdot \beta}} 
\end{eqnarray}
where $s_{1}\geq \ldots \geq s_{min(n,p)}\geq0 $ are the eigenvalues of S, and $s_{med}$ is their median. Denote the median of the quarter circle distribution \cite{bai2010spectral} for $\beta$ by $\mu_{\beta}$.
Then 
$ \hat{\sigma}^2(S_n)\overset{a.s.}{\longrightarrow} \sigma^2$ as
$n\to\infty$.
\end{thm}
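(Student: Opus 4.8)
The plan is to follow the proof of Theorem \ref{thmSigmaEstimation} almost verbatim, adjusting only for the two features of the regime $\beta\ge 1$: $S_n$ is now rank-deficient, and the pure-noise part of its spectrum is a Mar\v{c}enko--Pastur law with the \emph{reciprocal} shape $1/\beta$ and a rescaled noise level. By Theorem \ref{thmMDSRealLoss}, $S_n=(H\My_n)(H\My_n)^\T$, so the eigenvalues of $S_n$ are the squared singular values of $H\My_n$; and since $\beta\ge1$ forces $p_n\le n-1$, the appendix observation $\mathrm{rank}(S_n)=p_n$ (a.s.) shows that the $\min(n,p_n)=p_n$ numbers $s_{n,1}\ge\cdots\ge s_{n,p_n}$ in the statement are exactly the (a.s.\ strictly positive) nonzero eigenvalues, the $n-p_n$ null eigenvalues being excluded. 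So it suffices to find the weak limit of the empirical distribution of $\{s_{n,i}\}_{i=1}^{p_n}$ and to locate its median.

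The key step is to invoke the transposition device already used to prove Lemma \ref{lem:MPPropHY_bigBeta}: setting $\tilde\My_n=\My_n^\T$ recasts the model as one with aspect ratio $\tilde\beta=1/\beta\in(0,1]$ and effective noise level $\tilde\sigma=\sigma/\sqrt{\tilde\beta}=\sigma\sqrt{\beta}$, so the $\beta\le1$ analysis behind Lemma \ref{lem:MPPropHY} applies. As in that lemma and in the proof of Theorem \ref{thmSigmaEstimation}, all but the $d$ largest of the $p_n$ nonzero singular values of $H\My_n$ are asymptotically distributed as the pure-noise (``quarter-circle'') bulk for those transposed parameters; squaring, the empirical distribution of $\{\,s_{n,i}/(\sigma^2\beta)\,\}_{i=1}^{p_n}$ converges weakly, almost surely, to the Mar\v{c}enko--Pastur law $F_{MP}$ with shape parameter $1/\beta$. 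The normalization $\sigma^2\beta=\tilde\sigma^2$ is precisely what turns the Gram matrix of the now ``wide'' transposed noise block into a standard Mar\v{c}enko--Pastur ensemble; since $1/\beta\le 1$ this $F_{MP}$ has a density on a compact interval, bounded away from $0$ when $\beta>1$, and no atom at $0$ — consistent with having dropped the $n-p_n$ null eigenvalues. As $d$ is fixed, deleting the $d$ leading eigenvalues changes neither this limit nor its median.

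It remains to pass the median through the limit, exactly as in the proof of Theorem \ref{thmSigmaEstimation}. The median functional is continuous for weak convergence at any distribution with positive density at its unique median, such as $F_{MP}$ with parameter $1/\beta$; hence
\[
\frac{s_{n,med}}{\sigma^2\beta}\ \aslim\ \mathrm{Median}(F_{MP})\ =\ \mu_{1/\beta}\,,
\]
so $\hat\sigma^2(S_n)=s_{n,med}/(\mu_{1/\beta}\,\beta)\ \aslim\ \sigma^2$, as claimed. (Here $\mu_{1/\beta}=\mathrm{Median}(F_{MP})$ equals the square of the median of the quarter-circle law with parameter $1/\beta$, since the median commutes with $x\mapsto x^2$ on $[0,\infty)$.)

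The only genuinely new point — and the place where care is needed — is the bookkeeping forced by the rank deficiency together with the transposition: one must work with the $\min(n,p)=p_n$ nonzero eigenvalues, so that the relevant limit is a genuine atomless Mar\v{c}enko--Pastur law rather than one with a mass at $0$ (whose median would otherwise be $0$), and one must carry the rescaling $\tilde\sigma^2=\sigma^2/\tilde\beta=\sigma^2\beta$ through the transposition so that the normalizing constant appears as $\mu_{1/\beta}\cdot\beta$ with the correct factor $\beta$. Every probabilistic ingredient — a.s.\ weak convergence of the bulk to the quarter-circle/Mar\v{c}enko--Pastur law via \cite{bai2010spectral,Benaych-Georges2012}, insensitivity of the limit to finitely many top eigenvalues, and continuity of the median functional — is identical to that in the proof of Theorem \ref{thmSigmaEstimation}.
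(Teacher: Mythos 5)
Your proof is correct and follows exactly the route the paper intends for this appendix statement: the transposition device of Lemma \ref{lem:MPPropHY_bigBeta} (aspect ratio $\tilde\beta=1/\beta$, effective noise level $\sigma\sqrt{\beta}$) combined essentially verbatim with the median-continuity argument from the proof of Theorem \ref{thmSigmaEstimation}. Your explicit bookkeeping of the rank deficiency (restricting to the $\min(n,p)=p$ nonzero eigenvalues so the limiting law has no atom at zero) and of the factor $\beta$ in the normalization $\mu_{1/\beta}\cdot\beta$ is the only material addition, and it is right.
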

	
	\bibliographystyle{unsrt}
	\bibliography{MDSP}

\end{document}